\theoremstyle{definition}
\newtheorem{mydef}{Definition}[section]
\newtheorem{lem}[mydef]{Lemma}
\newtheorem{thm}[mydef]{Theorem}
\newtheorem{cor}[mydef]{Corollary}
\newtheorem{defin}[mydef]{Definition}
\newtheorem{example}[mydef]{Example}
\newtheorem{remark}[mydef]{Remark}
\newtheorem{fact}[mydef]{Fact}
\newcommand{\fct}[2]{{}^{#1}#2}
\newcommand{\ba}{\bar{a}}
\newcommand{\bx}{\bar{x}}
\newcommand{\by}{\bar{y}}
\newcommand{\prk}[1]{\operatorname{r}_\ck(#1)}
\newcommand{\ran}{\operatorname{ran}}
\newcommand{\cf}[1]{\text{cf} (#1)}
\newcommand{\seq}[1]{\langle #1 \rangle}
\newcommand{\rest}{\upharpoonright}
\newcommand{\id}{\text{id}}
\newcommand{\leap}[1]{\le_{#1}}
\newcommand{\lea}{\leap{\K}}
\newcommand{\K}{\mathbf{K}}
\newbox\noforkbox \newdimen\forklinewidth
\noforkbox\hbox{\lower 2pt\box1\lower
2pt\box0\relax}
\def\unionstick{\mathop{\copy\noforkbox}\limits}
\def\1nf{\unionstick^{(1)}}
\def\2nf{\unionstick^{(2)}}
\def\3nf{\unionstick^{(3)}}
\newcommand{\Hom}{\text{Hom}}
\newcommand{\Ll}{\mathbb{L}}
\newcommand{\tlt}{\triangleleft}
\newcommand{\LS}{\text{LS}}
\newcommand{\PP}{\mathbb{P}}
\newcommand{\bP}{\PP}
\newcommand{\Set}{\operatorname{Set}}
\newcommand{\ck}{\mathcal{K}}
\newcommand{\cL}{\mathcal{L}}
\newcommand{\SCH}{\operatorname{SCH}}
\newcommand{\ESCH}{\operatorname{ESCH}}
\newcommand{\Mod}{\operatorname{Mod}}
\title{Sizes and filtrations in accessible categories}
\date{\today \\
AMS 2010 Subject Classification: Primary 18C35. Secondary: 03C45, 03C48, 03C52, 03C55, 03C75, 03E05.}
\keywords{internal size, presentability rank, existence spectrum, accessibility spectrum, filtrations, singular cardinal hypothesis}
\author[Lieberman]{Michael Lieberman}
\email{lieberman@math.muni.cz}
\urladdr{http://www.math.muni.cz/\textasciitilde lieberman/}
\address{Department of Mathematics and Statistics, Faculty of Science, Masaryk University, Brno, Czech Republic}
\address{Institute of Mathematics, Faculty of Mechanical Engineering, Brno University of Technology, Brno, Czech Republic}
\author[Rosick\'y]{Ji\v r\'i Rosick\'y}
\email{rosicky@math.muni.cz}
\urladdr{http://www.math.muni.cz/\textasciitilde rosicky/}
\address{Department of Mathematics and Statistics, Faculty of Science, Masaryk University, Brno, Czech Republic}
\thanks{The second author is supported by the Grant agency of the Czech republic under the grant 19-00902S}
\author[Vasey]{Sebastien Vasey}
\email{sebv@math.harvard.edu}
\urladdr{http://math.harvard.edu/\textasciitilde sebv/}
\address{Department of Mathematics \\ Harvard University \\ Cambridge, Massachusetts, USA}
\begin{document}

\begin{abstract}
  Accessible categories admit a purely category-theoretic replacement for cardinality: the internal size. Generalizing results and methods from \cite{internal-sizes-jpaa},  we examine set-theoretic problems related to internal sizes and prove several Löwenheim-Skolem theorems for accessible categories. For example, assuming the singular cardinal hypothesis, we show that a large accessible category has an object in all internal sizes of high-enough cofinality. We also prove that accessible categories with directed colimits have filtrations: any object of sufficiently high internal size is (the retract of) a colimit of a chain of strictly smaller objects.
\end{abstract}

\maketitle

\tableofcontents

\section{Introduction}

Recent years have seen a burst of research activity connecting accessible categories with abstract model theory.  Abstract model theory, which has always had the aim of generalizing---in a uniform way---fragments of the rich classification theory of first order logic to encompass the broader nonelementary classes of structures that abound in mathematics proper, is perhaps most closely identified with abstract elementary classes (AECs, \cite{sh88}), but also encompasses metric AECs (mAECs, \cite{maec-hihy}), compact abstract theories (cats, \cite{bymcats}), and a host of other proposed frameworks.  While accessible categories appear in many areas that model theory fears to tread---homotopy theory, for example---they are, fundamentally, generalized categories of models, and the ambition to recover a portion of classification theory in this context has been present since the very beginning, \cite[p.~6]{makkai-pare}.  That these fields are connected has been evident for some time---the first recognition that AECs are special accessible categories came independently in \cite{beke-rosicky} and \cite{lieberman-categ}---but it is only recently that a precise middle-ground has been identified: the $\mu$-AECs of \cite{mu-aec-jpaa}.

While we recall the precise definition of $\mu$-AEC below, we note that they are a natural generalization of AECs in which the ambient language is allowed to be $\mu$-ary, one assumes closure only under $\mu$-directed unions rather than unions of arbitrary chains, and the Löwenheim-Skolem-Tarski property is weakened accordingly.  The motivations for this definition were largely model-theoretic---in a typical AEC, for example, the subclass of $\mu$-saturated models is not an AEC, but does form a $\mu$-AEC---but it turns out, remarkably, that $\mu$-AECs are, up to equivalence, precisely the accessible categories all of whose morphisms are monomorphisms (Fact \ref{maecsacc}). This provides an immediate link between model- and category-theoretic analyses of problems in classification theory, a middle ground in which the tools of each discipline can be brought to bear (and, moreover, this forms the basis of a broader collection of correspondences between $\mu$-AECs with additional properties---universality, admitting intersections---and accessible categories with added structure---locally multipresentable, locally polypresentable \cite{multipres-pams}).

Among other things, this link forces a careful consideration of how one should measure the size of an object: in $\mu$-AECs, we can speak of the cardinality of the underlying set, but we also have a purely category-theoretic notion of \emph{internal size}, which is defined---and more or less well-behaved---in any accessible category (see Definition \ref{presranksize}). This is derived in straightforward fashion from the \emph{presentability rank} of an object $M$, namely the least regular cardinal $\lambda$ (if it exists) such that any morphism sending $M$ into the colimit of a $\lambda$-directed system factors through a component of the system. In most cases, the presentability rank is a successor, and the internal size is then defined to be the predecessor of the presentability rank.

The latter notion generalizes, e.g.\ cardinality in sets (and more generally in AECs), density character in complete metric spaces, cardinality of orthonormal bases in Hilbert spaces, and minimal cardinality of a generator in classes of algebras (Example~\ref{factintsizeex}).  In a sense, one upshot of \cite{internal-sizes-jpaa} is that internal size is the more suitable notion for classification theory, not least because eventual categoricity in power fails miserably, while eventual categoricity in internal size is still very much open.  A related question is that of \emph{LS-accessibility}: in an accessibly category $\ck$, is it the case that there is an object of internal size $\lambda$ for every sufficiently large $\lambda$? Under what ambient set-theoretic assumptions, or concrete category-theoretic assumptions on $\ck$, does this hold? Broadly, approximations to LS-accessibility can be thought of as replacements for the Löwenheim-Skolem theorem in accessible categories. Notice that the analogous statement for cardinality fails miserably: for example, there are no Hilbert spaces whose cardinality has countable cofinality.

One broad aim of the present paper is to relate classical properties of a category (phrased in terms of limits and colimits) to the good behavior of internal sizes in this category. To properly frame these classical properties, we first recall the definition of an accessible category (see Section \ref{prelim-sec} for more details). For $\lambda$ a regular cardinal, a category is \emph{$\lambda$-accessible} if it has $\lambda$-directed colimits, has only a set (up to isomorphism) of $\lambda$-presentable objects (i.e.\ objects with presentability rank at most $\lambda$), and every object can be written as a $\lambda$-directed colimit of $\lambda$-presentable objects. A category is \emph{accessible} if it is $\lambda$-accessible for some $\lambda$. We will say that a category is \emph{large} if it has a proper class of non-isomorphic objects. Thus a category of structures is large exactly when it has objects of arbitrarily large cardinality.

Note that $\lambda$-accessible does not always imply $\lambda'$-accessible for $\lambda' > \lambda$ (see also Fact \ref{raising-acc}). If a given category has this property (i.e.\ it is accessible on a tail of regular cardinal), then we call it \emph{well accessible}. In general, the class of cardinals $\lambda$ such that a given category is $\lambda$-accessible (the \emph{accessibility spectrum}) is a key measure of the complexity of the category. For example, accessible categories with directed colimits \cite[4.1]{beke-rosicky} or $\mu$-AECs with intersections \cite[5.4]{internal-sizes-jpaa} are both known to be well accessible while general $\mu$-AECs  need not be. In the present paper, we attempt to systematically relate the accessibility spectrum to the behavior of internal sizes. For example:

\begin{itemize}
\item We prove that in any well accessible category, high-enough presentability ranks have to be successors (Corollary \ref{well-acc-succ}). This holds more generally of categories where the accessibility spectrum is unbounded below weakly inaccessibles. In particular, we recover the known results that any accessible category with directed colimits \cite[4.2]{beke-rosicky}, any $\mu$-AEC with intersections \cite[5.5(1)]{internal-sizes-jpaa}, and---assuming the singular cardinal hypothesis (SCH)---any accessible category \cite[3.11]{internal-sizes-jpaa}, has high-enough presentability ranks successor.
\item We prove, assuming SCH, that in large accessible categories with all morphisms monos, for all high-enough cardinals $\lambda$, $\lambda^+$-accessibility implies existence of an object of internal size $\lambda$ (Corollary \ref{acc-spectrum-cor}). In this sense, the accessibility spectrum is contained in the existence spectrum. In particular, well accessible categories with all morphisms monos are LS-accessible.
\item Assuming SCH, \emph{any} large accessible category has objects of all internal sizes with high-enough cofinality. In particular, it is weakly LS-accessible (i.e.\ has objects of all high-enough \emph{regular} internal sizes). This is Theorem \ref{esch-no-mono}.
\end{itemize}

Regarding the SCH assumption, we point out that we use a weaker version (``eventual SCH'', see Definition \ref{sch-def}(\ref{sch-def-5})) which follows from the existence of a strongly compact cardinal \cite[20.8]{jechbook}. Thus our conclusions follow from this large cardinal axiom. In reality, we work primarily in ZFC, obtain some local results depending on cardinal arithmetic, and then apply SCH to simplify the statements. Sometimes weaker assumptions than SCH suffice, but we do not yet know whether the conclusions above hold in ZFC itself. Unsurprisingly, dealing with successors of \emph{regular} cardinals is often easier, and can sometimes be done in ZFC.

Another contribution of the present account is the following: throughout the model- and set-theoretic literature, one finds countless constructions that rely on the existence of filtrations, i.e.\ the fact that models can be realized as the union of a continuous increasing chain of models of strictly smaller size.  In a $\lambda$-accessible category, on the other hand, one has that any object can be realized as the colimit of a (more general) $\lambda$-directed system of $\lambda$-presentable objects, but there is no guarantee that one can extract from this system a cofinal chain consisting of objects that are also small.  We here introduce the notion of \emph{well filtrable} accessible category (Definition~\ref{defresolving}), in which the internal size analog of this essential model-theoretic property holds, and show that certain well-behaved classes of accessible categories are well filtrable. We prove general results on existence of filtrations in arbitrary accessible categories (Theorem \ref{filtr-thm}), and deduce that accessible categories with directed colimits \emph{are} well filtrable (or really a slight technical weakening of this, see Corollary \ref{filtr-cor}). This result improves on \cite[Lemma 1]{rosicky-sat-jsl} (which established existence of filtrations only for object of regular internal sizes) and is used in a forthcoming paper on forking independence \cite{more-indep-v2} (a follow-up to \cite{indep-categ-advances}). 

The background required to read this paper is a familiarity with classical set theory (e.g.\ \cite[\S1-8]{jechbook}) and basic category theory (along the lines of \cite{joy-of-cats}); we freely use results and terminology related to accessible categories, e.g.\ \cite{adamek-rosicky,makkai-pare}. The third author's recent introductory paper \cite{bfo-accessible-v1} is a leisurly introduction to most of the required preliminaries. The notion of $\mu$-AEC, whose definition we recall below, first appears in \cite{mu-aec-jpaa}. In a sense, \cite{internal-sizes-jpaa} is also an essential prerequisite for this paper, but we nonetheless try to recall the most essential notions here to make the paper as self-contained as possible. We will also give the proof of a known result if it can be derived in a particularly straightforward way from results given here. The first few sections contain some basic model-theoretic results on $\mu$-AECs that nevertheless have not appeared exactly in this form before. In particular, we reprove the presentation theorem for $\mu$-AECs, fixing a mistake in \cite[\S3]{mu-aec-jpaa} reported to us by Marcos Mazari-Armida. We would like to thank him again for his thorough reading of our earlier work.  We also thank Mike Shulman and a referee for helpful suggestions, questions and comments.

\section{Preliminaries}\label{prelim-sec}

We start by recalling the necessary set-theoretic notation.

\begin{defin}\label{sch-def}
  Let $\lambda$ and $\mu$ be infinite cardinals with $\mu$ regular.
  \begin{enumerate}
  \item For $A$ a set, we write $[A]^{<\lambda}$ for the set of all subsets of $A$ of cardinality strictly less than $\lambda$, and similarly define $[A]^\lambda$ (we will sometimes think of it as being partially ordered by containment). For $B$ a set, we write $\fct{B}{A}$ for the set of all functions from $B$ to $A$, and let $\fct{<\lambda}{A} := \bigcup_{\alpha < \lambda} \fct{\alpha}{A}$.
  \item A partially ordered set (poset for short) $I$ is \emph{$\mu$-directed} (for $\mu$ a regular cardinal) if any subset of $I$ of cardinality strictly less than $\mu$ has an upper bound. When $\mu = \aleph_0$, we omit it.
  \item We write $\lambda^-$ for the predecessor of the cardinal $\lambda$, defined as follows:
    
		$$\lambda^-=\left\{\begin{array}{ll} \theta & \mbox{if }\lambda=\theta^+\\
		  \lambda & \mbox{if }\lambda\mbox{ is limit}\end{array}\right.$$
  \item We say that\footnote{Lurie \cite[A.2.6.3]{htt-lurie} writes $\mu \ll \lambda$ to mean that $\mu < \lambda$ are both regular and $\lambda$ is $\mu$-closed.} $\lambda$ is \emph{$\mu$-closed} if $\theta^{<\mu} < \lambda$ for all $\theta < \lambda$. 
  \item\label{tlt-def} For $\mu < \lambda$ regular, we write $\mu \tlt \lambda$ if $\cf{[\theta]^{<\mu}]} < \lambda$ for all $\theta < \lambda$ (see \cite[2.12]{adamek-rosicky}). We extend this notion to singular cardinals $\lambda$ as follows\footnote{This was suggested by Mike Shulman.}: when $\lambda$ is singular, we write $\mu \tlt \lambda$ if for all $\lambda_0 < \lambda$, there exists a regular $\lambda_0' \in (\lambda_0, \lambda]$ so that $\mu \tlt \lambda_0'$.
  \item When we write a statement like ``for all high-enough $\theta$, ...'', we mean ``there exists a cardinal $\theta_0$ such that for all $\theta \ge \theta_0$, ...''.
  \item\label{sch-def-5} We say that \emph{$\SCH$ holds at $\lambda$} if $\lambda^{\cf{\lambda}} = 2^{\cf{\lambda}} + \lambda^+$ ($\SCH$ stands for the \emph{singular cardinal hypothesis} --- note that the equation is always true for regular $\lambda$). We say that \emph{$\SCH$ holds above $\lambda$} if $\SCH$ holds at $\theta$ for all cardinals $\theta \ge \lambda$. The \emph{eventual singular cardinal hypothesis} ($\ESCH$) is the statement ``$\SCH$ holds at all high-enough $\theta$'', or more precisely ``there exists $\theta_0$ such that $\SCH$ holds above $\theta_0$''.
  \end{enumerate}
\end{defin}
\begin{remark}\label{tlt-rmk}
  Let $\mu$ be a regular cardinal. If $\lambda > \mu$ is $\mu$-closed, then $\mu \tlt \lambda$  (because $\cf{[\theta]^{<\mu}} \le \theta^{<\mu}$). Conversely, if $\lambda > 2^{<\mu}$ and $\mu \tlt \lambda$, then $\lambda$ is $\mu$-closed (see \cite[2.5]{internal-sizes-jpaa}). Note also that if $\mu \tlt \lambda$ and $\lambda$ is weakly inaccessible, then an easy closure argument shows that there exists unboundedly-many successor cardinals $\lambda_0 < \lambda$ such that $\mu \tlt \lambda_0$. In fact, in the present paper, we can sometimes replace an assumption of the form ``$\lambda$ is $\mu$-closed'' with the slightly more precise ``$\mu \tlt \lambda$'', however for simplicity we will rarely do so.
\end{remark}

It is a result of Solovay (see \cite[20.8]{jechbook}) that $\SCH$ holds above a strongly compact cardinal. Thus $\ESCH$ follows from this large cardinal axiom. We will assume $\ESCH$ in several results of the present paper.

The facts below are well-known to set theorists. See \cite[\S5]{jechbook}.

\begin{fact}\label{sch-basic} \
  \begin{enumerate}
  \item If $\lambda$ is a $\mu$-closed cardinal ($\mu$ regular), then $\lambda = \lambda^{<\mu}$ if and only if $\lambda$ has cofinality at least $\mu$.
  \item If $\SCH$ holds above an infinite cardinal $\theta$, then for every cardinal $\lambda$ and every regular cardinal $\mu$, $\lambda^{<\mu} \le \lambda^+ + \sup_{\theta_0 < \theta} \theta_0^{<\mu}$. In particular, every cardinal strictly greater than $\sup_{\theta_0 < \theta} \theta_0^{<\mu}$ which is not the successor of a cardinal of cofinality strictly less than $\mu$ is $\mu$-closed. 
  \end{enumerate}
\end{fact}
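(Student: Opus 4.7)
Both parts are classical, so the plan is to appeal to the standard toolkit of cardinal arithmetic: K\"onig's theorem, a boundedness argument for functions into regular cardinals, and the usual consequences of $\SCH$.

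For Part (1), the plan is to split by direction. If $\cf{\lambda} \ge \mu$, then for any $\alpha < \mu$ every function $\alpha \to \lambda$ has range bounded below $\lambda$, so $\lambda^\alpha = \sum_{\theta < \lambda} \theta^\alpha$; each summand is strictly less than $\lambda$ by $\mu$-closedness, hence $\lambda^\alpha \le \lambda$, and taking the supremum over $\alpha < \mu$ yields $\lambda^{<\mu} \le \lambda$. For the converse, if $\cf{\lambda} < \mu$, I would fix a cofinal sequence $\langle \lambda_i : i < \cf{\lambda}\rangle$ with each $\lambda_i < \lambda$; K\"onig's theorem gives $\lambda < \prod_i \lambda_i^+ \le \lambda^{\cf{\lambda}} \le \lambda^{<\mu}$, contradicting $\lambda = \lambda^{<\mu}$.

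For Part (2), I would prove the bound $\lambda^{<\mu} \le \lambda^+ + \sup_{\theta_0 < \theta} \theta_0^{<\mu}$ by induction on $\lambda$. The case $\lambda < \theta$ is trivial. For $\lambda \ge \theta$, if $\cf{\lambda} \ge \mu$ the boundedness argument from Part (1) together with the inductive hypothesis gives $\lambda^{<\mu} \le \lambda$; if $\cf{\lambda} < \mu$, then $\SCH$ at $\lambda$ yields $\lambda^{\cf{\lambda}} \le 2^{\cf{\lambda}} + \lambda^+$, with $2^{\cf{\lambda}}$ absorbed into the supremum term (and smaller exponents controlled by the inductive hypothesis applied to cardinals below $\lambda$). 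The ``in particular'' clause then follows: if $\lambda$ exceeds the supremum and is not the successor of a cardinal of cofinality $< \mu$, then for each $\theta' < \lambda$ the main inequality gives $(\theta')^{<\mu} \le (\theta')^+ + \sup_{\theta_0 < \theta} \theta_0^{<\mu}$; either $(\theta')^+ < \lambda$ directly, or $(\theta')^+ = \lambda$, in which case $\cf{\theta'} \ge \mu$ by hypothesis and Part (1) applied to the inductively $\mu$-closed $\theta'$ yields $(\theta')^{<\mu} = \theta' < \lambda$.

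The main obstacle will be organising the induction in Part (2) so that the ``in particular'' clause can invoke Part (1) without circularity: establishing $\mu$-closedness for all cardinals in the relevant range must be woven into the same inductive argument that proves the main bound, which is exactly what the supremum term is designed to enable once the pathological successor case has been excluded.
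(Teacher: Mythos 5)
The paper does not actually prove this statement: it is recorded as a Fact and referred to \cite[\S 5]{jechbook}, so the only comparison is with the standard arguments there. Your Part (1) is correct. Your inductive scheme for the main inequality in Part (2) is the standard one, but two steps are stated more strongly than what the argument yields. First, in the case $\cf{\lambda}\ge\mu$ the boundedness argument together with the inductive hypothesis gives $\lambda^{<\mu}\le\lambda+\sup_{\theta_0<\theta}\theta_0^{<\mu}$, not $\lambda^{<\mu}\le\lambda$ (the supremum term cannot be dropped, e.g.\ when $2^{<\mu}>\lambda$); this is harmless, since the target bound still follows. Second, in the case $\cf{\lambda}<\mu$, controlling $\lambda^{\cf{\lambda}}$ via $\SCH$ does not by itself control $\lambda^{\kappa}$ for exponents $\kappa$ with $\cf{\lambda}\le\kappa<\mu$: one needs the recursion $\lambda^{\kappa}\le\max(\lambda^{\cf{\lambda}},\sup_{\lambda'<\lambda}(\lambda')^{\kappa})$ --- if some $(\lambda')^{\kappa}\ge\lambda$ then $\lambda^{\kappa}=(\lambda')^{\kappa}$ and the inductive hypothesis finishes, and otherwise the supremum is $\lambda$ and $\SCH$ at $\lambda$ applies --- together with the Hausdorff formula when $\lambda$ is a successor. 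Your parenthetical gestures at this but does not supply it.

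The one step that fails as written is in the ``in particular'' clause: when $(\theta')^{+}=\lambda$ you invoke Part (1) for ``the inductively $\mu$-closed $\theta'$'', but $\theta'$ need not be $\mu$-closed. For instance, under $\GCH$ with $\theta=\aleph_0$, $\mu=\aleph_1$, and $\lambda=\beth_\omega^{++}$, the hypotheses of the clause hold, and the relevant predecessor $\theta'=\beth_\omega^{+}$ satisfies $\cf{\theta'}\ge\mu$, yet $\beth_\omega^{\aleph_0}=\beth_\omega^{+}=\theta'$, so $\theta'$ is not $\mu$-closed and Part (1) does not apply to it. The inductive bound only gives the non-strict inequality $(\theta'')^{<\mu}\le\theta'$ for $\theta''<\theta'$ (using that $\sup_{\theta_0<\theta}\theta_0^{<\mu}\le\theta'$), and strictness can genuinely fail. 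The conclusion survives because the \emph{proof} of Part (1), rather than its statement, only needs this non-strict bound: since $\cf{\theta'}\ge\mu$, every function from an ordinal below $\mu$ into $\theta'$ is bounded, whence $(\theta')^{<\mu}\le\theta'\cdot\sup_{\theta''<\theta'}(\theta'')^{<\mu}\le\theta'\cdot\theta'=\theta'<\lambda$. So the gap is repairable, but the appeal to Part (1) should be replaced by this direct computation.
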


The following easy result will be used in the proof of Lemma \ref{bounded-pres-nonex}

\begin{lem}\label{cofin-dir}
  Let $\mu$ be a regular cardinal and let $I$ be a $\mu$-directed poset. Let $\alpha < \mu$ and let $\seq{I_i : i < \alpha}$ be a sequence of subsets of $I$. If $I = \bigcup_{i < \alpha} I_i$, then there exists $i < \alpha$ such that $I_i$ is cofinal in $I$.
\end{lem}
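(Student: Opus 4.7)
The plan is to argue by contradiction using the defining property of $\mu$-directedness. Suppose that no $I_i$ is cofinal in $I$. Then for each $i < \alpha$ I can pick a witness $j_i \in I$ such that no element of $I_i$ sits above $j_i$, i.e.\ for all $k \in I_i$, it is not the case that $j_i \le k$.

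Now the index set $\alpha$ has cardinality strictly less than $\mu$, so the family $\{j_i : i < \alpha\}$ is a subset of $I$ of size less than $\mu$. By $\mu$-directedness of $I$, there exists a common upper bound $j \in I$ with $j_i \le j$ for every $i < \alpha$. Since $I = \bigcup_{i < \alpha} I_i$, we have $j \in I_{i_0}$ for some $i_0 < \alpha$. But $j_{i_0} \le j$ and $j \in I_{i_0}$ contradicts the choice of $j_{i_0}$ as a witness that $I_{i_0}$ is not cofinal. Hence some $I_i$ must be cofinal, as desired.

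There is no real obstacle here: the statement is essentially a direct unfolding of the definition of $\mu$-directedness applied to the negation of cofinality, and the only subtlety is making sure we pick the witnesses $j_i$ correctly and that $\alpha < \mu$ (not $\le \mu$) so that the family of witnesses really is of size strictly less than $\mu$ and thus admits an upper bound.
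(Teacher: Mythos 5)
Your proof is correct and is essentially identical to the paper's own argument: negate cofinality to get witnesses $j_i$, use $\mu$-directedness (valid since $\alpha < \mu$) to find a common upper bound $j$, and derive a contradiction from $j$ lying in some $I_{i_0}$. Nothing further to add.
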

\begin{proof}
  Suppose not. Then for each $i < \alpha$ there exists $p_i \in I$ such that no element of $I_i$ bounds $p_i$. Since $I$ is $\mu$-directed, there exists $p \in I$ such that $p_i \le p$ for all $i < \alpha$. There is $i \in I$ such that $p \in I_i$, contradicting the choice of $p_i$.
\end{proof}

The ideas at the heart of the category-theory-enriched form of classification theory at work here, in \cite{internal-sizes-jpaa}, and in \cite{indep-categ-advances}, are the notions of \emph{presentability rank} and \emph{internal size}.

\begin{defin}\label{presranksize}
	Let $\lambda$ and $\mu$ be infinite cardinals, $\mu$ regular, and let $\ck$ be a category.
	\begin{enumerate}
		\item We say that a diagram $D:I\to\ck$ is \emph{$\mu$-directed} if $I$ is a $\mu$-directed poset. A \emph{$\mu$-directed colimit} is just the colimit of a $\mu$-directed diagram.
		\item We say that an object $M$ in $\ck$ is \emph{$\mu$-presentable} if the hom-functor $$\Hom_\ck(M,-):\ck\to\Set$$
		preserves $\mu$-directed colimits.  Equivalently, $M$ is $\mu$-presentable if every morphism $f:M\to N$, with $N$ a $\mu$-directed colimit with cocone $\langle N_i\stackrel{d_i}{\to}N\,|\,i\in I\rangle$, the map $f$ factors essentially uniquely through one of the $d_i$'s.
		\item We say that $M$ is \emph{$(<\lambda)$-presentable} if it is $\theta$-presentable for some regular $\theta<\lambda + \aleph_1$.
		\item The \emph{presentability rank} of an object $M$ in $\ck$, denoted $\prk{M}$, is the smallest $\mu$ such that $M$ is $\mu$-presentable. We sometimes drop $\ck$ from the notation if it is clear from context.
		\item The \emph{internal size} of $M$ in $\ck$ is defined to be $|M|_\ck=\prk{M}^-$. Again, we may drop $\ck$ from the notation if it is clear from context. 
	\end{enumerate}
\end{defin}

\begin{example}\label{factintsizeex} We recall that internal size corresponds to the natural notion of size in familiar categories:
\begin{itemize}
	\item In the category of sets, the internal size of any infinite set is precisely its cardinality.  In an AEC, too, the internal size of any sufficiently big model will be its cardinality (see \cite[4.3]{lieberman-categ} or Fact \ref{rank-bounds} here).
	\item In the category of complete metric spaces and contractions, the internal size of any infinite space is its density character (the minimal cardinality of a dense subset).  This is true, as well, for sufficiently big models in a general metric AEC, \cite[3.1]{lrcaec-jsl}.
	\item In the category of Hilbert spaces and linear contractions, the internal size of any infinite dimensional space is the cardinality of its orthonormal basis.
        \item In the category of free algebras with exactly one $\omega$-ary function, the internal size is the minimal cardinality of a generator. In fact, a similar characterization holds in any $\mu$-AEC with a notion of generation (i.e.\ with intersections), see \cite[5.7]{internal-sizes-jpaa}.
\end{itemize}	
\end{example}

As the above examples indicate, the relationship between internal size and cardinality can be very delicate---particularly in a context as general as $\mu$-AECs or, equivalently, accessible categories with monomorphisms (henceforth \emph{monos})---and seems to become tractable only under mild set- or category-theoretic assumptions.  This is the substance of \cite{internal-sizes-jpaa}, results of which we refine in the present paper.

We recall from \cite[3.1, 3.3]{internal-sizes-jpaa} an essential piece of terminology:

\begin{defin}\label{systems-def}
	Let $\lambda$ and $\mu$ be infinite cardinals, $\mu$ regular.
	\begin{enumerate}
	\item A \emph{$(\mu,<\lambda)$-system} in a category $\ck$ is a $\mu$-directed diagram consisting of $(<\lambda)$-presentable objects. A \emph{$(\mu, \lambda)$-system} (for $\lambda$ regular) is a $(\mu, <\lambda^+)$-system.
	\item We say that a $(\mu,<\lambda)$-system with colimit $M$ is \emph{proper} if the identity map on $M$ does not factor through any object in the system.
	\end{enumerate}
\end{defin}

The following two results on the relationship between presentability and directedness of systems are basic.

\begin{fact}\label{sys-basic-facts}
  Let $\lambda$, $\mu$, and $\theta$ be infinite cardinals, $\mu$ regular. Let $\ck$ be a category with $\mu$-directed colimits.

  \begin{enumerate}
  \item\label{sys-basic-facts-1} \cite[3.5]{internal-sizes-jpaa} For $\lambda$ a regular cardinal, the colimit of a $(\mu, \lambda)$-system with $\theta$ objects is always $(\theta^+ + \lambda)$-presentable. In fact (for $\lambda$ not necessarily regular), if $\cf{\lambda} > \theta$ and $\lambda$ is not the successor of a singular cardinal, the colimit of a $(\mu, <\lambda)$-system with $\theta$ objects is always $(<(\theta^{++} + \lambda))$-presentable.
  \item\label{sys-basic-facts-2} \cite[3.4]{internal-sizes-jpaa} Let $M$ be the colimit of a $(\mu, <\lambda)$-system.

    \begin{enumerate}
    \item If $M$ is $\mu$-presentable, then the system is not proper.
    \item If the system is not proper, then $M$ is $(<\lambda)$-presentable.
    \end{enumerate}
  \end{enumerate}
\end{fact}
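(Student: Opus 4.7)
The plan is to handle the two parts of Fact~\ref{sys-basic-facts} separately: part (1) is a coherence argument exploiting $\chi$-directedness of the target, while part (2) reduces to the observation that retracts preserve presentability ranks.

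For part (1), let $D\colon I\to\ck$ be a $(\mu,\lambda)$-system with $|I|=\theta$, set $\chi:=\theta^{+}+\lambda$, and consider $f\colon M\to N$ where $M=\operatorname{colim}_{i\in I} M_i$ (with cocone $c_i$) and $N=\operatorname{colim}_{j\in J} N_j$ (with cocone $d_j$) is a $\chi$-directed colimit. Since each $M_i$ is $\lambda$-presentable and $\chi\ge\lambda$, factor each composite $f\circ c_i$ as $d_{j(i)}\circ g_i$ for some $j(i)\in J$ and $g_i\colon M_i\to N_{j(i)}$. Using $\chi\ge\theta^{+}$ and $|I|=\theta$, pick $j_0\in J$ bounding all the $j(i)$, obtaining candidate maps $h_i\colon M_i\to N_{j_0}$. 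The cocone identities $h_{i'}\circ D(i\le i')=h_i$ need not hold at $j_0$, but for each pair $i\le i'$ they hold after passing to a sufficiently large $j(i,i')\ge j_0$, by essential uniqueness of the factorization of $f\circ c_i$ through the $\lambda$-directed colimit $N$. There are at most $\theta$ such conditions and $J$ is $\theta^{+}$-directed, so a single $j^{*}\ge j_0$ absorbs them all. The resulting cocone induces the desired $M\to N_{j^{*}}$; essential uniqueness of the factorization of $f$ follows by an entirely analogous argument.

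For the parenthetical refinement, if the system is merely a $(\mu,<\lambda)$-system, each $M_i$ is $\theta_i$-presentable for some regular $\theta_i<\lambda+\aleph_1$. Under $\cf{\lambda}>\theta$ the $\theta_i$ are bounded below $\lambda+\aleph_1$, and the hypothesis that $\lambda$ is not the successor of a singular cardinal allows us to pick a single regular $\theta^{*}<\theta^{++}+\lambda$ dominating all of them; applying the argument above with $\lambda$ replaced by $\theta^{*}$ yields the sharper conclusion. For part (2)(a), $M$ is a $\mu$-directed colimit of the $M_i$, so $\mu$-presentability of $M$ forces $\id_M$ to factor through some $M_i$, which is precisely the failure of properness. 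For part (2)(b), any such factorization $\id_M=c_i\circ g$ exhibits $M$ as a retract of $M_i$; a standard splitting argument shows retracts of $\chi$-presentable objects are $\chi$-presentable (any $f\colon M\to P$ extends via the retraction to $M_i\to P$, inherits the factorization guaranteed by $\chi$-presentability of $M_i$, and one recovers the factorization of $f$ by precomposing with $g$), so $(<\lambda)$-presentability of $M_i$ transfers to $M$.

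The main obstacle is the coherence bookkeeping in part (1): the individual factorizations $g_i$ must be assembled into a genuine cocone, which requires both the count of coherence conditions to fit below $\chi$ and essential uniqueness to reliably lift pointwise disagreement to agreement higher up in $J$. The choice $\chi=\theta^{+}+\lambda$ is calibrated precisely to accommodate both the bound $|I|=\theta$ and the presentability bound $\lambda$ on the $M_i$, and the cofinality hypothesis in the refinement is exactly what allows the reduction to a uniform presentability rank below $\lambda$.
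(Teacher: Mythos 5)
Your proof is correct and is essentially the standard argument for these facts (the paper itself gives no proof here, only a citation to \cite[3.4, 3.5]{internal-sizes-jpaa}, where the same cocone-assembly and retract arguments appear): the count of $\theta$-many factorizations plus $\theta$-many coherence conditions against $\theta^+$-directedness, the use of essential uniqueness to repair the cocone, and the retract argument for part (2) are all exactly as intended. Your case analysis for the parenthetical refinement -- bounding the ranks $\theta_i$ below $\lambda$ when $\cf{\lambda}>\theta$ and $\lambda$ is limit, or taking $\theta^\ast=\lambda^-$ when $\lambda$ is the successor of a regular cardinal -- correctly isolates why successors of singulars must be excluded.
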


We hereby obtain a criterion for the existence of objects whose presentability rank is the successor of a regular cardinal (this is already implicit in the proof of \cite[3.12]{internal-sizes-jpaa}):

\begin{cor}\label{succ-reg-cor}
  Let $\mu$ be a regular cardinal. In a category with $\mu$-directed colimits, the colimit of a proper $(\mu, \mu^+)$-system containing at most $\mu$ objects has presentability rank $\mu^+$.
\end{cor}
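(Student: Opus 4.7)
The plan is to establish the equality $\prk{M} = \mu^+$ for $M$ the colimit of the proper $(\mu, \mu^+)$-system, by proving both inequalities as immediate applications of Fact \ref{sys-basic-facts}.

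For the upper bound $\prk{M} \le \mu^+$, I would invoke Fact \ref{sys-basic-facts}(\ref{sys-basic-facts-1}) with $\lambda = \mu^+$ (which is regular) and $\theta = \mu$: the colimit of a $(\mu, \mu^+)$-system with at most $\mu$ objects is $(\theta^+ + \lambda)$-presentable, and here $\theta^+ + \lambda = \mu^+ + \mu^+ = \mu^+$. Thus $M$ is $\mu^+$-presentable.

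For the lower bound $\prk{M} \ge \mu^+$, I would argue by contradiction: suppose $\prk{M} \le \mu$. Then there is a regular cardinal $\nu \le \mu$ such that $M$ is $\nu$-presentable. Since every $\mu$-directed poset is $\nu$-directed, any functor preserving $\nu$-directed colimits preserves $\mu$-directed colimits; hence $M$ is also $\mu$-presentable. By Fact \ref{sys-basic-facts}(\ref{sys-basic-facts-2})(a), this forces the given $(\mu, \mu^+)$-system to fail to be proper, contradicting our hypothesis. Therefore $\prk{M} > \mu$, i.e.\ $\prk{M} \ge \mu^+$.

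Combining the two bounds gives $\prk{M} = \mu^+$, as desired. I do not anticipate any real obstacle here; the only minor point to be careful about is that the definition of presentability rank only considers regular cardinals, but the downward implication ``$\nu$-presentable implies $\mu$-presentable for $\nu \le \mu$ regular'' handles this cleanly so that we can conclude directly from Fact \ref{sys-basic-facts}(\ref{sys-basic-facts-2})(a).
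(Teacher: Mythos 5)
Your proof is correct and follows essentially the same route as the paper: the upper bound via Fact \ref{sys-basic-facts}(\ref{sys-basic-facts-1}) and the lower bound via Fact \ref{sys-basic-facts}(\ref{sys-basic-facts-2})(a) together with properness. The only difference is that you spell out the monotonicity of presentability ($\nu$-presentable implies $\mu$-presentable for regular $\nu \le \mu$), which the paper leaves implicit.
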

\begin{proof}
  Let $M$ be this colimit. By Fact \ref{sys-basic-facts}(\ref{sys-basic-facts-1}), $M$ is $\mu^+$-presentable. By Fact \ref{sys-basic-facts}(\ref{sys-basic-facts-2}), $M$ is not $\mu$-presentable.
\end{proof}

The notion of a $(\mu, <\lambda)$-system also allows a more parameterized and compact rephrasing of the definition of an accessible category:

\begin{defin}\label{acccat-defs} Let $\ck$ be a category and $\lambda$ and $\mu$ be infinite cardinals, $\mu$ regular.\begin{enumerate}
	\item \cite[3.6]{internal-sizes-jpaa} We say that $\ck$ is $(\mu,<\lambda)$-accessible if it has the following properties:
		\begin{enumerate} 
		\item $\ck$ has $\mu$-directed colimits.
		\item $\ck$ contains a set of $(<\lambda)$-presentable objects, up to isomorphism.
		\item Any object in $\ck$ is the colimit of a $(\mu,<\lambda)$-system.
		\end{enumerate}
              \item We say that $\ck$ is  \emph{$(\mu, \lambda)$-accessible} if $\lambda$ is regular and $\ck$ is $(\mu, <\lambda^+)$-accessible. We say that $\ck$ is \emph{$\mu$-accessible} if it is $(\mu, \mu)$-accessible (this corresponds to the usual definition from, e.g.\ \cite{adamek-rosicky, makkai-pare}). We sometimes say \emph{finitely accessible} instead of $\aleph_0$-accessible.
              \item\label{well-acc-def} \cite[2.1]{beke-rosicky} We say that $\ck$ is \emph{well $\mu$-accessible} if it is $\theta$-accessible for each regular cardinal $\theta \ge \mu$. We say that $\ck$ is \emph{well accessible} if it is well $\mu_0$-accessible for some regular cardinal $\mu_0$.

  \end{enumerate}
\end{defin}

We will use the following result, allowing us to change the index of accessibility of a category (see \cite[2.3.10]{makkai-pare} or \cite[3.8]{internal-sizes-jpaa}):

\begin{fact}\label{raising-acc}
  Let $\ck$ be a $(\mu, <\lambda)$-accessible category. If $\theta$ is regular and $\mu \tlt \theta$, then $\ck$ is $(\theta, <(\lambda + \theta^+))$-accessible. Moreover, every object of $\ck$ can be written as a $\theta$-directed diagram, where each object is a $\mu$-directed colimit of strictly fewer than $\theta$-many $(<\lambda)$-presentables.
\end{fact}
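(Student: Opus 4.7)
The approach is to adapt the standard argument for raising the index of accessibility (cf.\ \cite[2.3.10]{makkai-pare}) while tracking the $(<\lambda)$-presentability of the generators throughout. The crucial combinatorial input is $\mu \tlt \theta$, which (since the notation presupposes $\mu < \theta$) lets us close $<\theta$-sized subsets of a $\mu$-directed index poset under $<\mu$-sized bounded families without the cardinality escaping $\theta$. Note that $\mu < \theta$ already implies that $\ck$ has $\theta$-directed colimits.

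Fix $M \in \ck$ and, by $(\mu, <\lambda)$-accessibility, a $(\mu, <\lambda)$-system $D \colon I \to \ck$ with colimit $M$. Let $J$ be the poset, ordered by inclusion, of $\mu$-directed subsets $I' \subseteq I$ with $|I'| < \theta$. I first argue that $J$ is $\theta$-directed and that every $<\theta$-sized subset of $I$ is contained in some member of $J$. Given $\seq{I'_\alpha : \alpha < \beta}$ in $J$ with $\beta < \theta$, set $X_0 := \bigcup_{\alpha < \beta} I'_\alpha$ and define $\seq{X_\xi : \xi < \mu}$ inductively: at a successor stage $\xi+1$, use $\mu \tlt \theta$ (applied to $|X_\xi| < \theta$) to choose a cofinal $\Sigma_\xi \subseteq [X_\xi]^{<\mu}$ of cardinality $<\theta$, and put $X_{\xi+1} := X_\xi \cup \{u_S : S \in \Sigma_\xi\}$, where each $u_S \in I$ is an upper bound of $S$ (which exists by $\mu$-directedness of $I$); take unions at limits. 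Then $X^* := \bigcup_{\xi < \mu} X_\xi$ has cardinality $<\theta$ (by regularity of $\theta$ and $\mu < \theta$), is $\mu$-directed (a $<\mu$-sized subset lies in some $X_\xi$ by regularity of $\mu$ and is dominated by some $u_S$ obtained at the next stage), and contains every $I'_\alpha$. The cofinality claim follows by applying the same closure to any initial $<\theta$-sized $X_0 \subseteq I$.

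Set $M_{I'} := \operatorname{colim}(D \rest_{I'})$ for $I' \in J$. Then $M$ is the colimit of the $\theta$-directed diagram $\seq{M_{I'} : I' \in J}$, and by construction each $M_{I'}$ is a $\mu$-directed colimit of strictly fewer than $\theta$ many $(<\lambda)$-presentables, which yields the ``moreover''. By Fact \ref{sys-basic-facts}(\ref{sys-basic-facts-1}), each $M_{I'}$ is $(<(\lambda + \theta^+))$-presentable: when $\lambda \le \theta^+$, every object of the subsystem is $\theta$-presentable and so is $M_{I'}$; when $\lambda$ is regular and $\lambda > \theta^+$, the $<\theta$ individual presentability ranks $\lambda_i < \lambda$ have supremum strictly below $\lambda$, and the fact delivers $(<\lambda)$-presentability of the colimit. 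Finally, the set condition on $(<(\lambda + \theta^+))$-presentables follows from the standard retraction argument: each such object has its identity factoring (after a possibly enlarged closure of the above kind) through some $M_{I'}$ and is thus a retract of one, and there are only set-many $(\mu, <\lambda)$-systems with $<\theta$ objects because $\ck$ has, up to isomorphism, only a set of $(<\lambda)$-presentables.

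The main obstacle is the closure construction of the second paragraph: producing $X^*$ simultaneously $\mu$-directed, of cardinality $<\theta$, and containing all the given $I'_\alpha$ is exactly what $\mu \tlt \theta$ is designed to support. A secondary subtlety is the presentability estimate for $M_{I'}$ when $\lambda$ is singular, which requires either invoking the second clause of Fact \ref{sys-basic-facts}(\ref{sys-basic-facts-1}) under $\cf{\lambda} > \theta$ or reducing to a suitable regular approximation $\lambda^* < \lambda + \theta^+$ such that every object of the $<\theta$-sized subsystem is already $\lambda^*$-presentable.
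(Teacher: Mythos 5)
The paper does not prove this statement --- it is quoted as a Fact from \cite[2.3.10]{makkai-pare} and \cite[3.8]{internal-sizes-jpaa} --- so I am comparing your argument against the standard closure construction those sources use. Your main construction is exactly that argument, and it is correct: closing a $<\theta$-sized subset of $I$ under upper bounds of the members of a $<\theta$-sized cofinal subset of $[X_\xi]^{<\mu}$ (which exists by $\mu \tlt \theta$), iterating $\mu$ times, and using regularity of $\theta$ to keep the cardinality below $\theta$, yields the $\theta$-directed poset $J$ of $\mu$-directed $<\theta$-sized subposets, and hence the ``moreover'' clause. The set-condition is also fine (and in fact trivial when $\lambda > \theta^+$, since then $(<(\lambda+\theta^+))$-presentable just means $(<\lambda)$-presentable, which is the hypothesis).

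The gap is in the presentability estimate for $M_{I'}$, precisely in the case you flag but do not close: $\lambda$ singular with $\theta^+ < \lambda$ and $\mu \le \cf{\lambda} \le \theta$. There the $<\theta$-many ranks of the objects indexed by $I'$ can be cofinal in $\lambda$, so no regular $\lambda^\ast < \lambda$ bounds them all; your proposed reduction ``to a suitable regular approximation $\lambda^\ast$'' is therefore unavailable, and the second clause of Fact \ref{sys-basic-facts}(\ref{sys-basic-facts-1}) does not apply since $\cf{\lambda} \le \theta \ge |I'|$ is possible. The only bound the quoted facts then give is $\lambda^+$-presentability of $M_{I'}$, which is \emph{not} $(<(\lambda+\theta^+))$-presentability, so the diagram over $J$ is not known to be a $(\theta, <(\lambda+\theta^+))$-system. (For the record, the complementary subcases do go through: if $\cf{\lambda} > \theta$ the supremum of the $<\theta$-many ranks stays below $\lambda$; if $\cf{\lambda} < \mu$ one can apply Lemma \ref{cofin-dir} to the $\mu$-directed poset $I'$ stratified by rank to pass to a cofinal subposet of bounded rank; and if $\lambda \le \theta^+$ or $\lambda$ is regular your argument is complete.) Closing the remaining case requires a further idea --- e.g.\ arranging during the closure that the ranks occurring in each $I'$ are bounded below $\lambda$, which is not automatic since the upper bounds $u_S$ are only known to exist somewhere in $I$. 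It is worth noting that everywhere this Fact is actually invoked in the paper (e.g.\ Fact \ref{functor-pres}, Theorem \ref{filtr-thm}), the presentability bound in play is regular, so your argument covers all the uses; but as a proof of the Fact as stated it is incomplete at this one point.
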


The following two definitions describe good behavior of the existence spectrum of an accessible category (LS-accessibility appears in \cite[2.4]{beke-rosicky}, weak LS-accessibility is introduced in \cite[A.1]{internal-sizes-jpaa}):

\begin{defin}\label{ls-acc-def}
  An accessible category $\ck$ is \emph{LS-accessible} if it has objects of all high-enough successor presentability ranks. We say that $\ck$ is \emph{weakly LS-accessible} if it has objects of all high-enough presentability ranks that are successors of \emph{regular} cardinals. 
\end{defin}
  
Similar to an accessible category, a $\mu$-AEC is an abstract class of structures in which any model can be obtained by sufficiently highly directed colimits of small objects:

\begin{defin}[{\cite[\S2]{mu-aec-jpaa}}] Let $\mu$ be a regular cardinal. \begin{enumerate}
  \item A \emph{($\mu$-ary) abstract class} is a pair $\K = (K, \lea)$ such that $K$ is a class of structures in a fixed $\mu$-ary vocabulary $\tau = \tau (\K)$, and $\lea$ is a partial order on $K$ that respects isomorphisms and extends the $\tau$-substructure relation. For $M \in \K$ we write $U M$ for the universe of $M$.
  \item An abstract class $\K$ is a \emph{$\mu$-abstract elementary class} (or \emph{$\mu$-AEC} for short) if it satisfies the following three axioms:

  \begin{enumerate}
  \item Coherence: for any $M_0, M_1, M_2 \in \K$, if $M_0 \subseteq M_1 \lea M_2$ and $M_0 \lea M_2$, then $M_0 \lea M_1$.
  \item Chain axioms: if $\seq{M_i : i \in I}$ is a $\mu$-directed system in $\K$, then:
    \begin{enumerate}
    \item $M := \bigcup_{i \in I} M_i$ is in $\K$.
    \item $M_i \lea M$ for all $i \in I$.
    \item If $M_i \lea N$ for all $i \in I$, then $M \lea N$.
    \end{enumerate}
  \item Löwenheim-Skolem-Tarski (LST) axiom: there exists a cardinal $\lambda = \lambda^{<\mu} \ge |\tau (\K)| + \mu$ such that for any $M \in \K$ and any $A \subseteq U M$, there exists $M_0 \in \K$ with $M_0 \lea M$, $A \subseteq U M_0$, and $|U M_0| \le |A|^{<\mu} + \lambda$. We write $\LS (\K)$ for the least such $\lambda$.
  \end{enumerate}

  When $\mu = \aleph_0$, we omit it and call $\K$ an \emph{abstract elementary class} (AEC for short).
  \end{enumerate}
\end{defin}

One can see any $\mu$-AEC $\K$ (or, indeed, any $\mu$-ary abstract class) as a category in a natural way: a morphism between models $M$ and $N$ in $\K$ is a map $f:M\to N$ which induces an isomorphism from $M$ onto $f[M]$, and such that $f[M] \lea N$.  We abuse notation slightly: we will still use boldface when referring to this category, i.e.\ we denote it by $\K$ and not $\ck$, to emphasize the concreteness of the category.

As mentioned in the introduction, these classes are an ideal locus of interaction between abstract model theory and accessible categories:

\begin{fact}[{\cite[\S4]{mu-aec-jpaa}}]\label{maecsacc}
  Any $\mu$-AEC $\K$ with $\LS (\K)=\lambda$ is a $\lambda^+$-accessible category with $\mu$-directed colimits and all morphisms monos.  Conversely, any $\mu$-accessible category $\ck$ with all morphisms monos is equivalent (as a category) to a $\mu$-AEC $\K$ with $\LS (\K)\le\max(\mu,\nu)^{<\mu}$, where $\nu$ is the size of the full subcategory of $\ck$ on the set of (representatives) of $\mu$-presentable objects.
\end{fact}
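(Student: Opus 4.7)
The argument splits into two directions. For the forward direction, given a $\mu$-AEC $\K$ with $\LS(\K) = \lambda$, monicity of all morphisms is immediate since morphisms in $\K$ are by definition isomorphisms onto their image, and closure under (categorical) $\mu$-directed colimits is the content of the chain axioms. The bulk of the work is $\lambda^+$-accessibility: any $M_0 \in \K$ with $|U M_0| \leq \lambda$ is $\lambda^+$-presentable, because a morphism $f : M_0 \to N$ into a $\lambda^+$-directed colimit $N = \operatorname{colim}_{i \in I} N_i$ with monic transition maps sends each element of $U M_0$ into some $N_{i_x}$, and the $\leq \lambda$-many indices $\{i_x : x \in U M_0\}$ admit a common upper bound $j \in I$ by $\lambda^+$-directedness of $I$, so $f$ factors through $N_j$. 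The LST axiom realizes any $M \in \K$ as the $\lambda^+$-directed colimit of its $\lea$-substructures of cardinality $\leq \lambda$, the chain axioms supply the required closure of this family under $\mu$-directed unions, and isomorphism types of $\tau$-structures of cardinality $\leq \lambda$ form a set in a fixed $\mu$-ary vocabulary.

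For the converse, let $\ck$ be $\mu$-accessible with monic morphisms, take $\mathcal{A}$ to be a skeleton of its $\mu$-presentable objects with $\nu = |\mathcal{A}|$, and concretize each $M \in \ck$ as a $\tau$-structure in a $\mu$-ary vocabulary whose symbols are indexed by morphisms between objects of $\mathcal{A}$. The universe of the structure associated to $M$ is (a suitable quotient of) $\bigsqcup_{A \in \mathcal{A}} \Hom_\ck(A, M)$, with function symbols encoding precomposition by morphisms in $\mathcal{A}$; density of $\mu$-presentables combined with monicity ensures that $M$ is recoverable from this data. Setting $M_0 \lea M_1$ exactly when the inclusion of universes comes from a morphism in $\ck$, coherence and the chain axioms transfer directly from the categorical structure of $\ck$, and the LST bound $\max(\mu,\nu)^{<\mu}$ is obtained by counting factorizations of maps from objects of $\mathcal{A}$ into a fixed $\mu$-presentable $M$ through ${<}\mu$-many intermediate objects of $\mathcal{A}$.

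The main obstacle is the converse direction, specifically establishing that the functor $\ck \to \K$ constructed above is an equivalence of categories while simultaneously achieving the stated LST bound. Faithfulness is essentially built in once the universe records all maps from $\mu$-presentables, but fullness requires showing that every $\tau$-embedding preserving the encoded categorical data is induced by an actual $\ck$-morphism, and essential surjectivity requires reconstructing any abstract element of $\K$ as a $\mu$-directed colimit of $\mu$-presentables of $\ck$ using the chain axioms; the tight LST counting must be reconciled with all of this simultaneously. The forward direction, by contrast, is essentially a direct translation between the model-theoretic axioms and the categorical definition of accessibility, whose only subtle point is the pointwise-then-directed argument that small structures are $\lambda^+$-presentable.
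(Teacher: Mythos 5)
First, note that the paper does not prove this statement: it is quoted as a Fact from \cite[\S4]{mu-aec-jpaa}, so there is no in-paper argument to compare yours against. Judged on its own terms, your forward direction is essentially the standard proof and is correct: the only points you elide are routine uses of the coherence axiom (to see that a map into a $\lambda^+$-directed union whose image lands set-theoretically inside some $N_j$ actually factors through $N_j$ as a $\K$-morphism, that the family of $\lea$-substructures of cardinality at most $\lambda$ is $\lambda^+$-directed, and that the union in the chain axioms really is the categorical colimit), together with the observation that every $\lambda^+$-presentable object is a retract of --- hence, all morphisms being monos, isomorphic to --- a substructure of cardinality at most $\lambda$, which is what bounds the number of $\lambda^+$-presentables up to isomorphism.

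The converse is where you have a genuine gap, and you say so yourself: you set up the right construction (concretize $M$ via the families $\Hom_{\ck}(A,M)$ for $A$ in a skeleton $\mathcal{A}$ of the $\mu$-presentables, with unary function symbols for precomposition), but then declare fullness, essential surjectivity, and the LST bound to be an unresolved ``obstacle'' rather than proving them. The missing idea is that your construction is exactly the canonical functor $E\colon\ck\to\Set^{\mathcal{A}^{\mathrm{op}}}$ sending $M$ to $\Hom_{\ck}(-,M)\rest\mathcal{A}$, and that $E$ is full and faithful because the $\mu$-presentable objects form a dense subcategory of a $\mu$-accessible category (see \cite[2.8]{adamek-rosicky}); fullness is precisely what makes your ordering satisfy coherence and clause (c) of the chain axioms, and it does not come ``for free'' from recording enough data in the universe. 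Essential surjectivity is then by fiat (take the class of structures to be the isomorphism closure of the image), and the LST bound comes from a closure argument you only gesture at: given $A\subseteq UM$, close $A$ under the $\nu$-many unary functions and under chosen upper bounds, in the $\mu$-filtered category of elements of $E(M)$, for each subset of size $<\mu$; the result is a sub-presheaf of size at most $\left(|A|+\max(\mu,\nu)\right)^{<\mu}$ which is again $\mu$-flat and hence in the image. Without these three ingredients the converse is a plausible blueprint, not a proof.
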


We finish with two facts on internal sizes in $\mu$-AECs. The first describes the relationship between presentability and cardinality:

\begin{fact}\label{rank-bounds}
  Let $\K$ be a $\mu$-AEC and let $M \in \K$. Then:

  \begin{enumerate}
  \item\cite[4.5]{internal-sizes-jpaa} $r_{\K} (M) \le |U M|^+ + \mu$.
  \item\label{rank-bounds-2} If $\lambda > \LS (\K)$ is a $\mu$-closed cardinal such that $M$ is $(<\lambda^+)$-presentable, then $|U M| < \lambda$.
  \end{enumerate}
\end{fact}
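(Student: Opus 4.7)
We argue by contradiction, assuming $|UM|\ge\lambda$. The strategy is to realize $M$ as a sufficiently highly directed colimit of $\K$-substructures of size strictly less than $\lambda$, and then apply the $(<\lambda^+)$-presentability of $M$ to factor $\id_M$ through one such substructure, forcing $|UM|<\lambda$.

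First we show that the poset $I:=\{M_0\lea M:|UM_0|<\lambda\}$, ordered by $\lea$, is $\mu$-directed with $\K$-colimit $M$. For any $A\subseteq UM$ with $|A|<\lambda$, the LST axiom yields some $M_A\lea M$ containing $A$ with $|UM_A|\le|A|^{<\mu}+\LS(\K)$; since $\lambda>\LS(\K)$ is $\mu$-closed, this bound is $<\lambda$, so $M_A\in I$. Given fewer than $\mu$ elements of $I$, the union of their universes has cardinality $<\lambda$ (noting $\mu<\lambda$), so LST produces a common upper bound in $I$, and the coherence axiom upgrades the $\tau$-inclusions to $\lea$. The chain axioms then guarantee that the colimit of the induced diagram $I\to\K$ is $M$.

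Second, let $\theta\le\lambda$ be a regular cardinal witnessing the $(<\lambda^+)$-presentability of $M$. We refine $I$ to a $\theta$-directed sub-diagram $I'$ with the same colimit, by forming the $\K$-colimits of $\mu$-directed sub-diagrams of $I$ of cardinality $<\theta$. Each such colimit is realized set-theoretically as a union of $<\theta$ sets of size $<\lambda$, which again has size $<\lambda$: this is immediate when $\lambda$ is regular, and when $\lambda$ is singular one invokes $\mu$-closedness (together with Fact \ref{sch-basic}(1), possibly after reducing to a suitable regular $\theta'\le\lambda$) to keep the LST-closure bounded. Hence each such colimit lies in $I$.

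Finally, the $\theta$-presentability of $M$ provides a factorization of $\id_M$ through some $M_0\in I'$, i.e.\ a $\K$-morphism $f:M\to M_0$ whose composite with $M_0\lea M$ equals $\id_M$. Since $\K$-morphisms are injective and $f$ is a section of the inclusion, we get $UM_0=UM$ and therefore $|UM|=|UM_0|<\lambda$, contradicting $|UM|\ge\lambda$. The main obstacle throughout is the size-control in the refinement step; this is straightforward in the regular case and is where the $\mu$-closedness of $\lambda$ is used most essentially when $\lambda$ is singular.
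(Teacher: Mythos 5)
Your argument for \emph{regular} $\lambda$ is correct and is essentially the paper's: the paper reduces to the regular case and then invokes the same ``write $M$ as a sufficiently directed colimit of small $\lea$-substructures and factor $\id_M$'' argument (via the cited results of the earlier paper). The problem is the singular case, and the gap occurs already in your first step, not in the refinement step where you try to address it. You claim that the poset $I=\{M_0\lea M: |UM_0|<\lambda\}$ is $\mu$-directed ``noting $\mu<\lambda$,'' but this fails when $\lambda$ is singular of cofinality $<\mu$: taking $\cf{\lambda}$-many elements of $I$ whose universes have sizes cofinal in $\lambda$, their union already has cardinality $\lambda$, so the LST closure need not land back in $I$. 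Such $\lambda$ are not excluded by the hypotheses (e.g.\ $\mu=\aleph_1$ and $\lambda=\beth_\omega>\LS(\K)$ is $\mu$-closed with $\cf{\lambda}=\omega<\mu$). Your parenthetical appeal to Fact \ref{sch-basic}(1) and ``reducing to a suitable regular $\theta'\le\lambda$'' in the second step does not repair this, both because it is not carried out and because the directedness of $I$ itself is what breaks.

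The fix is short and is exactly what the paper does: handle the singular case by replacing $\lambda$ outright. Since $M$ is $(<\lambda^+)$-presentable, it is $\theta$-presentable for some regular $\theta\le\lambda$, and $\theta<\lambda$ when $\lambda$ is singular; as $\lambda$ is a $\mu$-closed limit cardinal, $\lambda_0:=\left(\left(\theta+\LS(\K)\right)^{<\mu}\right)^+$ is a regular, $\mu$-closed cardinal with $\LS(\K)<\lambda_0<\lambda$ and $M$ still $\lambda_0$-presentable. Running your (correct) regular-case argument with $\lambda_0$ in place of $\lambda$ gives $|UM|<\lambda_0<\lambda$. With that reduction inserted at the start, your proof goes through.
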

\begin{proof}[Proof of (\ref{rank-bounds-2})]
  If $\lambda$ is singular, we can replace $\lambda$ by a regular $\lambda_0 \in [\LS (\K)^+, \lambda)$ such that $M$ is $\lambda_0$-presentable and $\lambda_0$ is $\mu$-closed, so without loss of generality, $\lambda$ is regular. Now apply \cite[4.8, 4.9]{internal-sizes-jpaa}.
\end{proof}

The second gives a sufficient condition for existence of objects of presentability rank the successor of a regular cardinal. The proof is very short given what has already been said, so we give it.

\begin{fact}[{\cite[3.12]{internal-sizes-jpaa}}]\label{succ-reg-existence}
  Let $\lambda$ and $\mu$ be infinite cardinals with $\mu$ regular. If $\ck$ is a $(\mu, <\lambda)$-accessible category with all morphisms monos and $\ck$ has an object that is not $(<\lambda)$-presentable, then $\ck$ has a proper $(\mu, <\lambda)$-system with $\mu$ objects. In particular, if in addition $\lambda \le \mu^{++}$, then $\ck$ has an object of presentability rank $\mu^+$.
\end{fact}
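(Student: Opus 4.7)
The plan is to apply the ``moreover'' clause of Fact~\ref{raising-acc} with $\theta = \mu^+$. This is permitted since $\mu \tlt \mu^+$ holds for every regular $\mu$: when $\theta < \mu$ the poset $[\theta]^{<\mu}$ has a maximum, and when $\theta = \mu$ the initial segments $\{\alpha : \alpha < \mu\}$ form a cofinal family of cardinality $\mu$ in $[\mu]^{<\mu}$. Applied to the non-$(<\lambda)$-presentable object $M$, this writes $M$ as the colimit of a $\mu^+$-directed diagram $\langle M_i : i \in I\rangle$ in $\ck$, where each $M_i$ is itself the colimit of a $(\mu,<\lambda)$-system $E_i$ containing strictly fewer than $\mu^+$---hence at most $\mu$---objects.

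Now split into cases. If some $M_i$ fails to be $(<\lambda)$-presentable, then the contrapositive of Fact~\ref{sys-basic-facts}(\ref{sys-basic-facts-2}b) applied to $E_i$ shows that $E_i$ is proper. A proper $(\mu,<\lambda)$-system cannot contain fewer than $\mu$ objects, because a $\mu$-directed poset of smaller cardinality has a maximum element through which the identity on the colimit factors trivially. Hence $E_i$ has exactly $\mu$ objects and is the desired system.

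Otherwise every $M_i$ is $(<\lambda)$-presentable, so $\langle M_i\rangle$ is itself a $(\mu^+,<\lambda)$-system whose colimit $M$ is not $(<\lambda)$-presentable---and is therefore proper by another application of the contrapositive of Fact~\ref{sys-basic-facts}(\ref{sys-basic-facts-2}b). From this outer system one extracts a proper $(\mu,<\lambda)$-sub-system of exactly $\mu$ objects by choosing inductively a strictly increasing chain $\langle i_\alpha : \alpha < \mu\rangle$ in $I$ (using $\mu^+$-directedness at limits, and the fact that $I$ has no maximum at successor steps to arrange $M_{i_\alpha} \to M_{i_{\alpha+1}}$ non-iso---otherwise the cofinal subdiagram $I_{\ge i_\alpha}$ would have colimit $M_{i_\alpha}$, contradicting $M \not\cong M_{i_\alpha}$). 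The most delicate step---and the main obstacle---is to use the monomorphism hypothesis to guarantee that the sub-colimit is not isomorphic (as an abstract object) to any $M_{i_\alpha}$, so that the sub-system is genuinely proper.

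Finally, the ``in particular'' clause is immediate: when $\lambda \le \mu^{++}$, the produced proper $(\mu,<\lambda)$-system with $\mu$ objects is a proper $(\mu,\mu^+)$-system with $\mu$ objects, so Corollary~\ref{succ-reg-cor} directly yields an object of presentability rank $\mu^+$.
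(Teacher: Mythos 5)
Your argument is correct, but it reaches the key configuration by a detour the paper does not take. The paper's proof works directly with the $(\mu,<\lambda)$-system $\seq{N_i : i \in I}$ provided by $(\mu,<\lambda)$-accessibility: that system is proper because $N$ is not $(<\lambda)$-presentable, and one extracts from the $\mu$-directed poset $I$ a strictly increasing chain of length $\mu$ that remains proper. You instead first invoke the moreover clause of Fact \ref{raising-acc} with $\theta = \mu^+$ (legitimate, since $\mu \tlt \mu^+$ for regular $\mu$, as you check) and then split into cases according to whether some inner colimit $M_i$ already fails to be $(<\lambda)$-presentable. Your first case is a nice observation --- a proper $\mu$-directed system must have at least $\mu$ objects, since otherwise its index poset has a maximum --- and it delivers the conclusion with no further work. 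But your second case reduces to exactly the chain-extraction the paper performs at the outset, so the preliminary raising of the index and the case split buy nothing; they only add the (harmless) overhead of verifying $\mu \tlt \mu^+$.

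The step you flag as ``the main obstacle'' does close, and by the same mono argument the paper implicitly relies on. Having chosen $i_{\alpha+1} > i_\alpha$ so that $M_{i_\alpha} \to M_{i_{\alpha+1}}$ is not an isomorphism (possible since, were every map $M_{i_\alpha} \to M_j$ with $j \ge i_\alpha$ an iso, the cofinal subdiagram over $I_{\ge i_\alpha}$ would be essentially constant and $M$ would be isomorphic to the $(<\lambda)$-presentable $M_{i_\alpha}$), suppose the identity on the colimit $M'$ of the extracted chain factored through $M_{i_\alpha}$. The cocone map $M_{i_\alpha} \to M'$ would then be a split epimorphism, and being a monomorphism it would be an isomorphism; the same reasoning applied to $M_{i_{\alpha+1}} \to M'$ makes it an isomorphism too, whence $M_{i_\alpha} \to M_{i_{\alpha+1}}$ is an isomorphism, contradicting the choice at successor steps. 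So the extracted chain is proper, and your proof is complete. (Note that properness is a statement about factoring the identity, not about abstract isomorphism types; it is precisely the mono hypothesis that collapses the two here.)
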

\begin{proof}
  Let $N$ be an object of $\ck$ that is not $(<\lambda)$-presentable. Since $\ck$ is $(\mu, <\lambda)$-accessible, $N$ is the colimit of a $(\mu, <\lambda)$-system $\seq{N_i : i \in I}$. Since $N$ is not $(<\lambda)$-presentable, the system is proper (Fact \ref{sys-basic-facts}(\ref{sys-basic-facts-2})). Since $I$ is $\mu$-directed and all morphisms of $\ck$ are monos, we can pick a strictly increasing chain $\seq{i_k : k < \mu}$ inside $I$ such that $\seq{M_{i_k} : k < \mu}$ is still proper. This then gives the desired proper $(\mu, <\lambda)$-system with $\mu$ objects. The ``in particular'' part follows from Corollary \ref{succ-reg-cor}.
\end{proof}

\section{Directed systems and cofinal posets}

As observed in \cite[4.1]{mu-aec-jpaa}, if $\K$ is a $\mu$-AEC then any $M \in \K$ is the $\LS (\K)^+$-directed union of all its $\K$-substructures of cardinality at most $\LS (\K)$. In an AEC (i.e.\ when $\mu = \aleph_0$), it is well known that furthermore one can write $M = \bigcup_{s \in [U M]^{<\mu}} M_s$, where the $M_s$'s form a $\mu$-directed system of objects of cardinality at most $\LS (\K)$, and $s \subseteq U M_s$ for all $s$. In the proof of \cite[3.2]{mu-aec-jpaa}, it was asserted without proof that the corresponding statement was also true when $\mu > \aleph_0$. This was a key ingredient of the proof of the presentation theorem there. It was pointed out to us (by Marcos Mazari-Armida) that the proof for AECs does \emph{not} generalize to $\mu$-AECs: since we cannot take unions, there are problems at limit steps. Thus we in fact do not know whether the statement is still true for $\mu$-AECs. In this section, we prove a weakening and in the next two sections reprove the presentation theorem and related axiomatizability results.

We will more generally develop the theory of subposets of posets that are cofinal in a generalized sense:

\begin{defin}
  A \emph{partially ordered set} (or poset) is a binary relation $(\bP, \le)$ which is transitive, reflexive, and antisymmetric. We may not always explicitly mention $\le$. A \emph{subposet} of a partially ordered set $\bP$ is a poset $(\bP^\ast, \le^\ast)$ with $\bP^\ast \subseteq \bP$ and $x \le^\ast y$ implying $x \le y$. 
\end{defin}

\begin{defin}\label{cofinal-def}
  For $\theta$ a cardinal, a subposet $\bP^\ast$ of a poset $\bP$ is \emph{$\theta$-cofinal} if for any $p \in \bP$ and any sequence $\seq{p_i : i < \theta}$ of elements of $\bP^\ast$ with $p_i \le p$ for all $i < \theta$, there exists $q \in \bP^\ast$ such that $p \le q$ and $p_i \le^\ast q$ for all $i < \theta$. We say that $\bP^\ast$ is \emph{$(<\theta)$-cofinal} if it is $\theta_0$-cofinal for all $\theta_0 < \theta$.
\end{defin}

Note that $\bP^\ast$ is $0$-cofinal if and only if it is cofinal in $\bP$ as a set. Being $1$-cofinal means that if $p_0 \le p$ with $p \in \bP$ and $p_0 \in \bP^\ast$, there exists $q \in \bP^\ast$ so that $p \le q$ \emph{and} $p_0 \le^\ast q$. If $\bP^\ast$ is $1$-cofinal, an induction shows that it is automatically $n$-cofinal for all $n < \omega$ (a similar argument appears in \cite[3.9]{multidim-v2}). More generally, we can get that it is $\theta$-cofinal assuming chain bounds (a poset has \emph{$\alpha$-chain bounds} if any chain of length $\alpha$ has an upper bound; define $(<\alpha)$-chain bounds, $(<\le \alpha)$-chain bounds, in the expected way):

\begin{lem}\label{chain-cofinal}
  Let $\theta$ be a cardinal and let $\bP^\ast$ be a subposet of a poset $\bP$. If $\bP^\ast$ is $1$-cofinal in $\bP$ and $\bP^\ast$ has $(\le \theta)$-chain bounds, then $\bP^\ast$ is $\theta$-cofinal in $\bP$.
\end{lem}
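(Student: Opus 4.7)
The plan is to construct, by transfinite recursion on $i$, a chain $\seq{q_i : i < \theta}$ of elements of $\bP^\ast$ under $\le^\ast$, satisfying (a) $q_j \le^\ast q_i$ for $j < i$, (b) $p_i \le^\ast q_i$ for every $i < \theta$, and (c) $p \le q_0$ in $\bP$. Once such a chain is in hand, the $(\le \theta)$-chain-bounds hypothesis supplies an upper bound $q \in \bP^\ast$ with $q_i \le^\ast q$ for all $i < \theta$. Then $p_i \le^\ast q_i \le^\ast q$ gives $p_i \le^\ast q$ by transitivity, and $p \le q_0 \le q$ (using the subposet inclusion $\le^\ast \subseteq \le$) gives $p \le q$ in $\bP$, so $q$ witnesses the desired $\theta$-cofinality.

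The base stage $i = 0$ is handled by a single application of $1$-cofinality to $p_0 \le p$. At a limit stage $i$, having built $\seq{q_j : j < i}$, I first use $(\le i)$-chain bounds to extract an intermediate upper bound $r \in \bP^\ast$ of the partial chain, and then, noting that $p_i \le p \le q_0 \le^\ast r$ forces $p_i \le r$ in $\bP$, apply a $2$-cofinality move to the pair $\{r, p_i\}$ above $r$ to obtain $q_i \in \bP^\ast$ with $r \le^\ast q_i$ and $p_i \le^\ast q_i$; this extends the chain (since $q_j \le^\ast r \le^\ast q_i$) while preserving (b). The successor case is analogous, applying $2$-cofinality to $\{q_i, p_{i+1}\}$ above $q_i$ in $\bP$.

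The main obstacle is precisely this repeated appeal to $2$-cofinality, which is not immediate from $1$-cofinality: applying $1$-cofinality to $p_{i+1} \le q_i$ produces $q' \in \bP^\ast$ with $p_{i+1} \le^\ast q'$ but only $q_i \le q'$ in $\bP$, and then upgrading $q_i \le q'$ to $\le^\ast$ by a second application loses the $\le^\ast$-relation from $p_{i+1}$. To remedy this, I invoke the observation recorded in the paragraph preceding the lemma, namely that $1$-cofinality already implies $n$-cofinality for all finite $n$, proved by a separate induction iterating $1$-cofinality with careful bookkeeping. With $2$-cofinality thus available, all stages of the transfinite recursion proceed, and the $(\le\theta)$-chain-bounds hypothesis handles the limits, completing the argument.
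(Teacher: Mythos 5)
Your construction is essentially the paper's: build a $\le^\ast$-increasing chain $\seq{q_i : i < \theta}$ that absorbs the $p_i$'s one at a time, invoking the chain-bound hypothesis at limit stages and at the very end. You have also put your finger on exactly the right difficulty: each successor (and limit) step asks you to place \emph{two} elements of $\bP^\ast$ (the previous $q_i$, or the intermediate bound $r$, together with the next $p_i$) $\le^\ast$-below a single new element of $\bP^\ast$ lying above a given element of $\bP$ --- that is, it requires $2$-cofinality --- and iterating $1$-cofinality does not obviously produce this, since each application secures one $\le^\ast$-relation while demoting the others to mere $\le$-relations in $\bP$.

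Your resolution, however --- citing the remark preceding the lemma that $1$-cofinality implies $n$-cofinality for all finite $n$ --- does not close the gap: that remark is asserted without proof, and it is in fact false for general subposets, where $\le^\ast$ may be strictly finer than the restriction of $\le$. Take $\bP = \{a,b,p\} \cup \{c_n : n < \omega\}$ with $a, b < p < c_0 < c_1 < \cdots$ and $a, b$ incomparable, and let $\bP^\ast = \{a,b\} \cup \{c_n : n < \omega\}$ carry the partial order $\le^\ast$ consisting of reflexivity together with $a \le^\ast c_{2k}$, $b \le^\ast c_{2k+1}$, and $c_m \le^\ast c_n$ exactly when $m \le n$ and $m \equiv n \pmod{2}$. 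This is a subposet, it is $1$-cofinal (every required witness can be found among the $c_n$ of the appropriate parity), and, finite chains having maxima, it has $(\le n)$-chain bounds for every $n < \omega$; yet no element of $\bP^\ast$ lies above $p$ in $\bP$ and simultaneously satisfies $a \le^\ast q$ and $b \le^\ast q$, so $\bP^\ast$ is not $2$-cofinal. Thus the lemma itself fails at $\theta = 2$, where the chain-bound hypothesis is vacuous, and the same circularity infects the paper's own proof, whose induction on $\theta$ bottoms out at $\theta_0 = 2$ with only $1$-cofinality in hand. The statement becomes correct --- and your argument then goes through essentially verbatim --- if ``$1$-cofinal'' is strengthened to ``$2$-cofinal'' in the hypothesis: a short induction shows $2$-cofinality does propagate to $n$-cofinality for all finite $n$, because the absorb-one-more-element step only ever needs to merge two elements. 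Note that in the paper's sole application, Corollary \ref{resolv-lem}, $\le^\ast$ is the restriction of $\le$ to $\bP^\ast$, in which case $\theta$-cofinality for every $\theta$ reduces to $0$-cofinality and the whole issue is invisible.
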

\begin{proof}
  Let $p \in \bP$ and $\seq{p_i : i < \theta}$ be below $p$ and in $\bP^\ast$. We proceed by induction on $\theta$. If $\theta \le 1$, the result is true by assumption. Assume now that $\theta > 1$ and the result holds for all $\theta_0 < \theta$. We build an increasing chain $\seq{q_i : i < \theta}$ in $\bP^\ast$ such that $p \le q_0$ and for all $j < i < \theta$, $p_j \le^\ast q_i$. This is possible by successive use of the induction hypothesis and $1$-cofinality (at limits, use the chain bound assumption). Now take $q \in \bP^\ast$ $\le^\ast$-above each $q_i$.
\end{proof}

Definition \ref{cofinal-def} helps us understand the relationship between $\theta$-directedness of $\bP^\ast$ and $\bP$. The proof is straightforward, so we omit it.

\begin{lem}\label{dir-lem}
  Let $\bP^\ast$ be a $0$-cofinal subposet of a poset $\bP$ and let $\theta$ be an infinite cardinal. 
  \begin{enumerate}
  \item If $\bP^\ast$ is $\theta$-directed, then $\bP$ is $\theta$-directed.
  \item If $\bP^\ast$ is $(<\theta)$-cofinal and $\bP$ is $\theta$-directed, then $\bP^\ast$ is $\theta$-directed.
  \end{enumerate}
\end{lem}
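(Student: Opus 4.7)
The plan is to handle both parts directly from the definitions, using $0$-cofinality to pass between $\bP$ and $\bP^\ast$, and keeping careful track of the difference between $\le$ and $\le^\ast$.

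For part (1), I would start with an arbitrary subset $S \subseteq \bP$ of size strictly less than $\theta$. For each $s \in S$, I would use $0$-cofinality of $\bP^\ast$ in $\bP$ to pick some $s^\ast \in \bP^\ast$ with $s \le s^\ast$. Then $\{s^\ast : s \in S\}$ is a subset of $\bP^\ast$ of cardinality strictly less than $\theta$, so by $\theta$-directedness of $\bP^\ast$ it has an upper bound $q \in \bP^\ast$ under $\le^\ast$. Since $\le^\ast \subseteq \le$, the chain $s \le s^\ast \le^\ast q$ gives $s \le q$ in $\bP$, so $q$ is the upper bound we wanted.

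For part (2), I would start with a subset $S = \{p_i : i < \alpha\} \subseteq \bP^\ast$ with $\alpha < \theta$. Since $\le^\ast \subseteq \le$, each $p_i$ also lies below any upper bound of $S$ in $\bP$, and $\theta$-directedness of $\bP$ supplies such a bound $p \in \bP$ with $p_i \le p$ for all $i < \alpha$. Now I would apply $\alpha$-cofinality (available because $\alpha < \theta$ and $\bP^\ast$ is $(<\theta)$-cofinal) to the sequence $\seq{p_i : i < \alpha}$ and the element $p$, producing some $q \in \bP^\ast$ with $p \le q$ and $p_i \le^\ast q$ for all $i < \alpha$. This $q$ is the $\le^\ast$-upper bound of $S$ required for $\theta$-directedness of $\bP^\ast$.

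There is no real obstacle here: the whole argument is essentially two diagram chases. The one small subtlety to keep in mind is that upper bounds in a subposet have to be witnessed under $\le^\ast$, not just under $\le$, which is precisely why in part (2) the weaker $0$-cofinality is insufficient and one genuinely needs the $(<\theta)$-cofinality hypothesis to upgrade an upper bound living in $\bP$ to one that dominates the given elements in the $\le^\ast$ order of $\bP^\ast$.
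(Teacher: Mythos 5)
Your proof is correct, and since the paper explicitly omits the proof as straightforward, your argument is exactly the routine verification the authors intended: $0$-cofinality pushes upper-bound problems from $\bP$ into $\bP^\ast$ for part (1), and $|S|$-cofinality (available from the $(<\theta)$-cofinality hypothesis) upgrades a $\le$-upper bound in $\bP$ to a $\le^\ast$-upper bound in $\bP^\ast$ for part (2). Your closing remark correctly identifies why $0$-cofinality alone would not suffice in part (2).
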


The next result extracts, from a diagram in $\bP$, a certain cofinal extension of that diagram in $\bP^\ast$. It will be applied to $\mu$-AECs.

\begin{thm}\label{dir-thm}
  Let $\theta$ be an infinite cardinal, let $\bP$ be a $\theta$-directed poset, and let $\bP^\ast$ be a $(<\theta)$-cofinal subposet of $\bP$. Let $(I, \le)$ be a partial order and let $\seq{p_i : i \in I}$ be a diagram in $\bP$ (that is, if $i \le j$ are in $I$, then $p_i \le p_j$). If for all $i \in I$, $|\{j \in I \mid j < i\}| < \theta$, then there exists $I_0 \subseteq I$ cofinal and a diagram $\seq{q_i : i \in I_0}$ in $\bP^\ast$ such that for all $i \in I_0$, $p_i \le q_i$.
\end{thm}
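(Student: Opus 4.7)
The plan is to apply Zorn's lemma to the poset $\F$ consisting of pairs $(J, q)$ where $J \subseteq I$ and $q : J \to \bP^\ast$ is a function satisfying $p_j \le q(j)$ for every $j \in J$ and $q(a) \le^\ast q(b)$ whenever $a, b \in J$ with $a \le b$, ordered by pointwise extension. Clearly $(\emptyset, \emptyset) \in \F$, and every chain in $\F$ admits its union as an upper bound, so Zorn's lemma produces a maximal element $(J, q)$. Setting $I_0 := J$ and $q_i := q(i)$, the task reduces to verifying that $J$ is cofinal in $I$.

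Suppose for contradiction that $J$ is not cofinal, and fix $i_* \in I$ such that no $j \in J$ satisfies $i_* \le j$. In particular $i_* \notin J$, and any $j \in J$ comparable with $i_*$ must satisfy $j < i_*$. Let $J_* := \{j \in J : j < i_*\}$; by hypothesis $|J_*| \le |\{j \in I : j < i_*\}| < \theta$. The key step is to produce $q^* \in \bP^\ast$ such that $p_{i_*} \le q^*$ and $q(j) \le^\ast q^*$ for every $j \in J_*$. Once found, extending $q$ by $i_* \mapsto q^*$ yields an element of $\F$ strictly above $(J, q)$, contradicting maximality; no ``upward'' diagram condition needs to be checked, since by the choice of $i_*$ there is no $j \in J$ with $i_* \le j$.

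To construct $q^*$, first apply $\theta$-directedness of $\bP$ to the set $\{p_{i_*}\} \cup \{q(j) : j \in J_*\}$, which has cardinality at most $|J_*| + 1 < \theta$ since $\theta$ is infinite, to obtain an upper bound $p^* \in \bP$. Then enumerate $J_*$ as $\seq{j_k : k < |J_*|}$ and apply $|J_*|$-cofinality of $\bP^\ast$---valid since $|J_*| < \theta$ and $\bP^\ast$ is $(<\theta)$-cofinal---to $p^*$ and the sequence $\seq{q(j_k) : k < |J_*|}$, yielding the desired $q^*$. The main obstacle is recognizing that a naive transfinite induction on elements of $I$ would fail for non-well-founded partial orders (e.g.\ $I = \mathbb{Z}$ with $\theta = \aleph_1$), which is why the Zorn's lemma formulation is used; the size hypothesis $|\{j : j < i\}| < \theta$ enters exactly at the extension step, ensuring that both $\theta$-directedness of $\bP$ and $(<\theta)$-cofinality of $\bP^\ast$ can be applied simultaneously.
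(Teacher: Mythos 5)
Your proof is correct and follows essentially the same route as the paper: Zorn's lemma applied to the poset of partial diagrams in $\bP^\ast$ dominating $\seq{p_i : i \in I}$, followed by a maximality contradiction in which the hypothesis $|\{j \in I \mid j < i\}| < \theta$ is used exactly at the extension step. The only (immaterial) difference is how the new element above $p_{i_*}$ and the $q(j)$, $j < i_*$, is produced: the paper first lifts $p_{i^\ast}$ into $\bP^\ast$ via $0$-cofinality and then invokes the $\theta$-directedness of $\bP^\ast$ supplied by Lemma~\ref{dir-lem}, whereas you take an upper bound in $\bP$ first and then apply $|J_*|$-cofinality directly; both are valid.
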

\begin{proof}
  We define a new poset $\mathcal{F}$. Its objects are diagrams $\seq{q_i : i \in I_0}$ in $\bP^\ast$, with $I_0 \subseteq I$ (not necessarily cofinal) and with $p_i \le q_i$ for all $i \in I_0$. Order $\mathcal{F}$ by extension. Now $\mathcal{F}$ is not empty (the empty diagram is in $\mathcal{F}$) and every chain in $\mathcal{F}$ has an upper bound (its union). By Zorn's lemma, there is a maximal element $\seq{q_i : i \in I_0}$ in $\mathcal{F}$. We show that $I_0$ is cofinal in $I$. Suppose not and let $i^\ast \in I$ be such that $i^\ast \not \le j$ for any $j \in I_0$.

  Since $\bP^\ast$ is $0$-cofinal in $\bP$, there exists $p_{i^\ast}'$ in $\bP^\ast$ such that $p_{i^\ast} \le p_{i^\ast}'$. Consider the set $Q := \{p_{i^\ast}'\} \cup \{q_i \mid i \in I_0, i < i^\ast\}$. Note that $|Q| < \theta$ and by Lemma \ref{dir-lem} $\bP^\ast$ is $\theta$-directed, so pick $q_{i^\ast} \in \bP^\ast$ an upper bound for $Q$. 

  Consider $\seq{q_i : i \in I_0 \cup \{i^\ast\}}$. We show this is an element of $\mathcal{F}$, contradicting the maximality of $\seq{q_i : i \in I_0}$. First, it is clear from the definition that $p_i \le q_i$ for all $i \in I_0 \cup \{i^\ast\}$. It remains to see that $\seq{q_i : i \in I_0 \cup \{i^\ast\}}$ is a diagram in $\bP^\ast$. So let $i \le j$ be two elements of $I_0 \cup \{i^\ast\}$. If $i, j \in I_0$, then we are done by the assumption that $\seq{q_i : i \in I_0}$ is a diagram. Thus at least one of $i$ or $j$ is equal to $i^\ast$. If $i = j = i^\ast$, then since $q_{i^\ast} \in \bP^\ast$ we are also done. If $i \in I_0$ and $j = i^\ast$, we have made sure in the construction that $q_i \le^\ast q_j$. Finally, we cannot have that $i = i^\ast$ and $j \in I_0$ by the choice of $i^\ast$ (a witness to the non-cofinality of $I_0$).
\end{proof}

As an application, we can study sufficiently closed objects $M$ in an abstract class. While we may not be able to resolve such an object with a system indexed by $[U M]^{<\theta}$, we can at least get a system indexed by a \emph{cofinal} subset of $[U M]^{<\theta}$:

\begin{defin}
  Let $\K$ be an abstract class and let $\theta$ be an infinite cardinal. An object $M \in \K$ is \emph{$\theta$-closed} if for any $A \in [U M]^{<\theta}$, there exists $M_0 \in \K$ with $M_0 \lea M$, $A \subseteq U M_0$, and $|U M_0| < \theta$.
\end{defin}

\begin{cor}\label{resolv-lem}
  Let $\K$ be an abstract class, let $\theta$ be an infinite cardinal, and let $M \in \K$ be $\theta$-closed. Let $I \subseteq [U M]^{<\theta}$ be such that for any $s \in I$, $|\mathcal{P} (s) \cap I| < \cf{\theta}$. Then there exists $I_0 \subseteq I$ and $\seq{M_s : s \in I_0}$ such that $I_0$ is cofinal in $I$ and for any $s, t \in I_0$:

  \begin{enumerate}
  \item\label{resolv-lem-1} $M_s \le M$.
  \item $|U M_s| < \theta$.
  \item\label{resolv-lem-3} $s \subseteq U M_s$.
  \item\label{resolv-lem-4} $s \subseteq t$ implies $U M_s \subseteq  U M_t$.
  \end{enumerate}
\end{cor}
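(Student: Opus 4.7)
The plan is to reduce the statement to Theorem \ref{dir-thm}, applied with the cardinal parameter $\cf{\theta}$ in place of the theorem's $\theta$. Take $\bP := [U M]^{<\theta}$ ordered by set inclusion, and let $\bP^\ast$ be the subposet consisting of those $A \in \bP$ that can be realized as $U N$ for some $N \lea M$ with $|U N| < \theta$, again under inclusion. Embed the indexing poset $I$ in $\bP$ via its natural inclusion $I \subseteq [U M]^{<\theta}$, so that the family $\seq{p_s : s \in I}$ defined by $p_s := s$ is trivially a diagram in $\bP$.

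The core of the proof is verifying the three hypotheses of Theorem \ref{dir-thm} in this setup. First, $\bP$ is $\cf{\theta}$-directed: given a family of fewer than $\cf{\theta}$ members of $[U M]^{<\theta}$, the supremum of their sizes is still strictly less than $\theta$ by the definition of cofinality, so their union provides an upper bound in $\bP$. Second, $\bP^\ast$ is $(<\cf{\theta})$-cofinal in $\bP$: here the $\theta$-closedness of $M$ enters---given $A \in \bP$ and any family $\seq{A_i : i < \theta_0}$ in $\bP^\ast$ with $A_i \subseteq A$ and $\theta_0 < \cf{\theta}$, $\theta$-closedness applied to $A$ alone yields some $N \lea M$ with $A \subseteq U N$ and $|U N| < \theta$, and then $U N \in \bP^\ast$ dominates both $A$ and every $A_i$. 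Third, the downward cardinality hypothesis of the theorem is immediate from the assumption $|\mathcal{P}(s) \cap I| < \cf{\theta}$, since $\{t \in I : t \subsetneq s\} \subseteq \mathcal{P}(s) \cap I$.

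Theorem \ref{dir-thm} will then output a cofinal $I_0 \subseteq I$ together with a diagram $\seq{q_s : s \in I_0}$ in $\bP^\ast$ such that $s \subseteq q_s$ for each $s \in I_0$. Choosing, for each $s \in I_0$, a representative $M_s \lea M$ with $U M_s = q_s$ and $|U M_s| < \theta$ gives items (1)--(3) directly, and item (4) is precisely the statement that $\seq{q_s : s \in I_0}$ is a diagram with respect to $\subseteq$. I do not foresee any real obstacle; the only subtle point, and the main reason for invoking Theorem \ref{dir-thm} through $\cf{\theta}$ rather than $\theta$, is that when $\theta$ is singular the poset $[U M]^{<\theta}$ is only guaranteed to be $\cf{\theta}$-directed, matching the phrasing of the hypothesis on $I$.
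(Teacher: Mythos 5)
Your proof is correct and follows essentially the same route as the paper: the same posets $\bP = [U M]^{<\theta}$ and $\bP^\ast$ consisting of universes of small $\lea$-substructures of $M$, and the same application of Theorem \ref{dir-thm} at the parameter $\cf{\theta}$ with $p_s = s$. The only (harmless) difference is that you verify $(<\cf{\theta})$-cofinality of $\bP^\ast$ directly from $\theta$-closedness, whereas the paper routes through $1$-cofinality together with $(<\cf{\theta})$-chain bounds via Lemma \ref{chain-cofinal}; since both posets carry the inclusion order, your shortcut is valid.
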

\begin{proof}
  Let $\bP$ be the partially ordered set $[U M]^{<\theta}$, ordered by subset inclusion. Note that $\bP$ is $\theta$-directed. Let $\bP^\ast$ be the partially ordered set $\{U M_0 \mid M_0 \in \K, M_0 \lea M, |U M_0| < \theta\}$, also ordered by subset inclusion. It is easy to check that $\bP^\ast$ is a subposet of $\bP$. It is also straightforward to see that $\bP^\ast$ is $1$-cofinal in $\bP$ (using that $M$ is $\theta$-closed). Similarly, $\bP^\ast$ has $(<\cf{\theta})$-chain bounds. By Lemma \ref{chain-cofinal}, $\bP^\ast$ is $(<\cf{\theta})$-cofinal in $\bP$. Apply Theorem \ref{dir-thm}, with $\theta$ there being $\cf{\theta}$ here and $p_s = s$ for each $s \in I$.
\end{proof}

As an application of Corollary \ref{resolv-lem}, we study what happens if we weaken the Löwenheim-Skolem-Tarski (LST) axiom of $\mu$-AECs to the ``weak LST axiom'': there exists a cardinal $\lambda \ge |\tau (\K)| + \mu$ such that $\lambda = \lambda^{<\mu}$ and every object of $\K$ is $\lambda^+$-closed (i.e.\ for all $M \in \K$ and all $A \subseteq U M$ \emph{of cardinality at most $\lambda$}, there exists $M_0 \in \K$ with $M_0 \lea M$, $|U M| \le \lambda$, and $A \subseteq U M$). It was shown in \cite[4.6]{mu-aec-jpaa} that such a weakening still implies the original LST axiom, but the proof did not give that the minimal $\lambda$ satisfying the weak LST axiom should be the Löwenheim-Skolem-Tarski number. We prove this now.

\begin{cor}
  Let $\mu$ be a regular cardinal, let $\K$ be an abstract class satisfying the coherence and chain axioms of $\mu$-AECs, and let $\lambda$ be an infinite cardinal. If $\lambda = \lambda^{<\mu}$ and any element of $\K$ is $\lambda^+$-closed, then $\K$ is a $\mu$-AEC with $\LS (\K) \le \lambda$.  
\end{cor}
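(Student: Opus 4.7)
The plan is to verify the full LST axiom for $\K$ with constant $\lambda$ directly. Given $M \in \K$ and $A \subseteq U M$, I need to produce $M_0 \lea M$ with $A \subseteq U M_0$ and $|U M_0| \le |A|^{<\mu} + \lambda$. If $|A| \le \lambda$ this is immediate from $\lambda^+$-closedness of $M$ (with room to spare), so I focus on the case $|A| > \lambda$ and write $\theta := |A|^{<\mu} + \lambda$.

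The key move is to apply Corollary \ref{resolv-lem} to $M$ with indexing set $I := [A]^{<\mu}$, ordered by inclusion, taking the $\theta$ of that corollary to be $\lambda^+$. First I would check the hypotheses: $\mu$ is regular, so $I$ is $\mu$-directed; $I \subseteq [U M]^{<\lambda^+}$ since every $s \in I$ has $|s| < \mu \le \lambda$; and for each $s \in I$, $|\mathcal{P}(s)| \le 2^{<\mu} \le \lambda^{<\mu} = \lambda < \lambda^+ = \cf{\lambda^+}$, so $|\mathcal{P}(s) \cap I| < \cf{\lambda^+}$. Since $M$ is $\lambda^+$-closed, the corollary supplies a cofinal $I_0 \subseteq I$ and a family $\seq{M_s : s \in I_0}$ of $\lea$-substructures of $M$ with $|U M_s| \le \lambda$, $s \subseteq U M_s$, and $s \subseteq t \Rightarrow U M_s \subseteq U M_t$ for $s, t \in I_0$; the coherence axiom then upgrades the latter inclusions to $M_s \lea M_t$.

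Next I would verify that $I_0$ is itself $\mu$-directed, so that $\seq{M_s : s \in I_0}$ becomes a genuine $\mu$-directed system eligible for the chain axioms. Given any $\alpha < \mu$ elements $s_i \in I_0$, the union $\bigcup_{i < \alpha} s_i$ lies in $I$ (by regularity of $\mu$), and by cofinality of $I_0$ is contained in some $t \in I_0$ which dominates each $s_i$. The chain axioms then produce $M_0 := \bigcup_{s \in I_0} M_s \in \K$ with $M_0 \lea M$. Every singleton of $A$ belongs to $I$ and, by cofinality, to some $s \in I_0$, so $A \subseteq U M_0$; finally $|U M_0| \le |I_0| \cdot \lambda \le |A|^{<\mu} + \lambda = \theta$, as required.

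The main obstacle, conceptually, is the gap between the bare $0$-cofinality of $I_0$ that Corollary \ref{resolv-lem} furnishes and the full $\mu$-directedness demanded by the chain axioms of $\mu$-AECs. This gap is bridged by the specific structure of $I = [A]^{<\mu}$, which is already closed under $<\mu$-sized unions, so any such union of elements of $I_0$ can be absorbed back into $I_0$ via cofinality. Once this is noted, everything else reduces to routine cardinal arithmetic and an application of the chain and coherence axioms.
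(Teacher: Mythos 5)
Your proposal is correct and follows essentially the same route as the paper: apply Corollary \ref{resolv-lem} to $I = [A]^{<\mu}$ with $\theta = \lambda^+$, use coherence to turn the resulting family into a $\mu$-directed system, and take its union. The only difference is that you spell out details the paper leaves implicit (the $\mu$-directedness of $I_0$ via closure of $[A]^{<\mu}$ under small unions, and the cardinal arithmetic checks), which is a faithful elaboration rather than a different argument.
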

\begin{proof}
  Let $M \in \K$ and let $A \subseteq U M$. Apply Corollary \ref{resolv-lem} with $I := [A]^{<\mu}$ and $\theta := \lambda^+$ (note that $2^{<\mu} \le \lambda^{<\mu} = \lambda < \theta$, so the cardinal arithmetic condition there is satisfied). Let $\seq{M_s : s \in I_0}$ be as given there. This is a $\mu$-directed system by coherence, so let $N$ be its union. By construction, $N \le M$ and $A \subseteq U N$. Moreover, $|U N| \le |I_0| \cdot \lambda \le |A|^{<\mu} + \lambda$, as needed. 
\end{proof}

\section{Presentation theorem and axiomatizability}

We reprove here the presentation theorem for $\mu$-AECs (and more generally for accessible categories with $\mu$-directed colimits and all morphisms monos), in the form outlined and motivated in \cite[\S6]{multipres-pams} (there, additional assumptions on the existence of certain directed colimits had to be inserted to make the proof work). The idea is simple: any $\mu$-accessible category is equivalent to the category of models of an $\Ll_{\infty, \mu}$-sentence, and we can Skolemize such a sentence to obtain the desired functor. We first state the three facts we will use. Recall that $\Mod (\phi)$ denotes the category of models of the sentence $\phi$, with morphisms all homomorphisms (i.e.\ maps preserving functions and relations). See also \cite[\S4]{multipres-pams} for a summary of what is known on axiomatizability of accessible categories.

\begin{fact}[{\cite[3.2.3, 3.3.5, 4.3.2]{makkai-pare}}]\label{acc-mod-phi}
  Any $\mu$-accessible category is equivalent to $\Mod (\phi)$, for $\phi$ an $\Ll_{\infty, \mu}$-formula.
\end{fact}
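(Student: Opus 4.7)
The plan is to invoke the standard representation theorem for accessible categories: if $\ck$ is $\mu$-accessible and $\mathcal{A}$ is a skeleton of the full subcategory of $\mu$-presentable objects (which is a small category, by $\mu$-accessibility), then the functor $M \mapsto \Hom_\ck(-, M) \rest \mathcal{A}$ exhibits $\ck$ as equivalent to the full subcategory $\text{Flat}_\mu(\mathcal{A}^{op}, \Set)$ of $\mu$-flat $\Set$-valued functors on $\mathcal{A}^{op}$. It therefore suffices to axiomatize this subcategory by an $\Ll_{\infty, \mu}$-sentence over a suitable vocabulary, and check that homomorphisms on the syntactic side correspond to natural transformations on the functor side.

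First I would set up a many-sorted vocabulary $\tau$ with one sort $S_A$ for each $A \in \mathcal{A}$ and one unary function symbol $\hat{f} : S_A \to S_B$ for each morphism $f : A \to B$ in $\mathcal{A}$. (Alternatively, one can work with a single sort and pairwise disjoint unary predicates $P_A$.) The functoriality axioms $\widehat{\id_A}(x) = x$ and $\widehat{g \circ f}(x) = \hat{f}(\hat{g}(x))$ are finitary and universally quantified, hence lie in $\Ll_{\omega, \omega} \subseteq \Ll_{\infty, \mu}$; their models are precisely the arbitrary functors $F : \mathcal{A}^{op} \to \Set$.

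Next, $\mu$-flatness of such an $F$ is equivalent to $\mu$-filteredness of its category of elements, and translates into two axiom schemes over $\mathcal{A}$: (i) for every $\alpha < \mu$ and every $\alpha$-tuple $\seq{A_i : i < \alpha}$ of sorts, the universal closure over $x_i \in S_{A_i}$ of the disjunction, indexed by all cocones $\seq{g_i : A_i \to B : i < \alpha}$ in $\mathcal{A}$, asserting ``there exists $y \in S_B$ with $\hat{g_i}(y) = x_i$ for all $i < \alpha$''; and (ii) a coequalizer-type scheme asserting that any parallel pair in $\mathcal{A}$ agreeing on an element is coequalized by a further morphism. Since $\mathcal{A}$ is a set, the disjunctions involved are set-indexed; since $\alpha < \mu$, the conjunctions have size strictly less than $\mu$. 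Both operations are permitted in $\Ll_{\infty, \mu}$.

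The main obstacle is bookkeeping rather than mathematics: one must carefully assemble $\phi$ from these schemes and then verify that $M \models \phi$ if and only if the assignment $A \mapsto S_A^M$ together with the maps $\hat{f}^M$ defines a $\mu$-flat functor $F_M : \mathcal{A}^{op} \to \Set$. Once this is done, the assignment $M \mapsto F_M$ is functorial on homomorphisms (sending them to natural transformations) and is an equivalence, yielding $\Mod(\phi) \simeq \text{Flat}_\mu(\mathcal{A}^{op}, \Set) \simeq \ck$.
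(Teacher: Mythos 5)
The paper does not actually prove this statement: it is imported as a Fact from Makkai--Par\'e, whose own route goes through sketches (a $\mu$-accessible category is the category of models of a small sketch, and categories of models of sketches are $\Ll_{\infty,\mu}$-axiomatizable). Your route --- representing a $\mu$-accessible category as the $\mu$-flat functors on $\mathcal{A}^{op}$ for $\mathcal{A}$ a skeleton of the $\mu$-presentables, and then axiomatizing flatness directly --- is a standard and legitimate alternative, and is arguably more economical for this particular statement, since beyond functoriality the only axioms needed are the two filteredness schemes for the category of elements. The identification of homomorphisms of many-sorted $\tau$-structures with natural transformations, and the fullness of the canonical embedding $M \mapsto \Hom_\ck(-,M)\rest\mathcal{A}$ onto the $\mu$-flat functors, are both standard, so the outline is sound.

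Two points need repair before the argument closes. First, the typing of your function symbols is inconsistent with contravariance: for $F : \mathcal{A}^{op}\to\Set$ and $f : A \to B$ in $\mathcal{A}$, the induced map goes $F(B)\to F(A)$, so $\hat{f}$ should be a symbol from $S_B$ to $S_A$ (and then $\widehat{g \circ f} = \hat{f}\circ\hat{g}$ typechecks); as written, $\hat{f} : S_A\to S_B$ but your composition axiom $\widehat{g\circ f}(x)=\hat{f}(\hat{g}(x))$ applies $\hat{g}$ to an element of the wrong sort. Second, and more substantively, $\mu$-filteredness of $\operatorname{El}(F)$ requires coequalizing every family of \emph{fewer than $\mu$} parallel morphisms, not just parallel pairs: for uncountable $\mu$ the pair version, even together with upper bounds for $<\mu$-many objects, does not imply $\mu$-filteredness, because the induction that recovers finite families from pairs at $\mu=\aleph_0$ breaks at limit stages. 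So your scheme (ii) should read: for each $\alpha<\mu$ and each $\alpha$-indexed family $\seq{f_i : A\to B}$ in $\mathcal{A}$, whenever all $\hat{f_i}$ agree at $y\in S_B$, some $h : B\to C$ with $hf_i=hf_j$ for all $i,j<\alpha$ admits $z\in S_C$ with $\hat{h}(z)=y$. With these corrections the proof goes through.
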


Recall \cite[2.1]{multipres-pams} that (for a regular cardinal $\mu$) a \emph{$\mu$-universal class} is a class of structures in a $\mu$-ary vocabulary that is closed under isomorphisms, substructure, and $\mu$-directed unions. 

\begin{fact}[Skolemization]\label{skolem}
  Let $\mu \le \lambda$. If $\phi$ is an $\Ll_{\lambda^+, \mu}$-sentence in the vocabulary $\tau$, there exists an expansion $\tau^+$ of $\tau$ with function symbols and a $\mu$-universal class $K^+$ with vocabulary $\tau^+$ such that:

  \begin{enumerate}
  \item $|\tau^+| \le \lambda$.
  \item The reduct map is a faithful functor from $K^+$ into $\Mod (\phi)$ that is surjective on objects and preserves $\mu$-directed colimits.
  \end{enumerate}
\end{fact}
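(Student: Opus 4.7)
The plan is to carry out classical Skolemization, adapted to the infinitary setting $\Ll_{\lambda^+, \mu}$, taking care that all axioms end up universal so that the resulting class is closed under substructure. First I would put $\phi$ in negation normal form and then build $\tau^+$ from $\tau$ by adjoining: for each subformula $\psi$ of $\phi$, a relation symbol $R_\psi$ of arity $|\FV{\psi}|$; for each existential subformula $\exists \bar{x}\, \psi(\bar{y}, \bar{x})$, Skolem function symbols $f_{\psi, \alpha}$ of arity $|\bar{y}|$, one for each component of $\bar{x}$; and for each universal subformula $\forall \bar{x}\, \psi(\bar{y}, \bar{x})$, dual ``Skolem'' function symbols $g_{\psi, \alpha}$ meant to furnish potential counterexamples. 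All arities are $<\mu$, and $\phi$ has at most $\lambda$ subformulas (being an $\Ll_{\lambda^+, \mu}$-sentence), each contributing at most $\mu \le \lambda$ new symbols, so $|\tau^+| \le \lambda$.

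Next I would define $\phi^+$ as the conjunction of universal $\Ll_{\lambda^+,\mu}$-sentences expressing: (a) compositionality of $R_\psi$ for atomic and negated-atomic $\psi$; (b) the natural biconditionals $R_{\bigwedge_i \psi_i}(\bar{y}) \leftrightarrow \bigwedge_i R_{\psi_i}(\bar{y})$ and $R_{\bigvee_i \psi_i}(\bar{y}) \leftrightarrow \bigvee_i R_{\psi_i}(\bar{y})$ (both directions of each are universal since no quantifier appears); (c) for each existential subformula, both $R_\psi(\bar{y}, \bar{x}) \to R_{\exists \bar{x} \psi}(\bar{y})$ and the Skolem axiom $R_{\exists \bar{x} \psi}(\bar{y}) \to R_\psi(\bar{y}, f_\psi(\bar{y}))$; (d) for each universal subformula, both $R_{\forall \bar{x} \psi}(\bar{y}) \to R_\psi(\bar{y}, \bar{x})$ and the dual Skolem axiom $R_\psi(\bar{y}, g_\psi(\bar{y})) \to R_{\forall \bar{x} \psi}(\bar{y})$; and (e) $R_\phi$. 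Each conjunct has a $\forall$-prefix over a $<\mu$-tuple followed by a quantifier-free infinitary body, so $K^+ := \Mod (\phi^+)$ is closed under substructure. Closure under $\mu$-directed unions is the usual argument: any $<\mu$-tuple from the union sits in some stage by $\mu$-directedness, and quantifier-free infinitary formulas agree between that stage and the union. Hence $K^+$ is a $\mu$-universal class.

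Finally I would analyze the reduct functor. A straightforward subformula induction shows that for $M^+ \in K^+$ with $\tau$-reduct $M$, $R_\psi^{M^+}(\bar{a})$ implies $M \models \psi(\bar{a})$ for every subformula $\psi$ (the $\exists$-case uses the Skolem function as an explicit witness; the $\forall$-case uses the forward axiom instantiated at each $\bar{b}$, combined with the induction hypothesis). Together with the axiom $R_\phi$, this gives $M \models \phi$, so the reduct lands in $\Mod (\phi)$. Faithfulness is immediate, and for surjectivity on objects one interprets $R_\psi^{M^+} := \{\bar{a} : M \models \psi(\bar{a})\}$ and chooses interpretations of the $f_\psi$'s and $g_\psi$'s by the axiom of choice. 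Preservation of $\mu$-directed colimits is clean since in both categories such colimits are computed on underlying sets as $\mu$-directed unions, with which reducts commute. The main obstacle I anticipate is the universal quantifier: the naive biconditional $R_{\forall \bar{x} \psi}(\bar{y}) \leftrightarrow \forall \bar{x}\, R_\psi(\bar{y}, \bar{x})$ is not universally axiomatizable, and the fix --- dual Skolem functions witnessing failures of $\forall$ --- is precisely what necessitates working in negation normal form from the start.
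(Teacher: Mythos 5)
Your proof is correct, and it reaches the stated conclusion by a route that differs in implementation, though not in spirit, from the paper's. The paper's sketch is the classical Skolem-hull construction: fix a fragment $\Phi \ni \phi$ of cardinality at most $\lambda$, add one Skolem function per formula of $\Phi$, and define $K^+$ \emph{semantically} as the class of expansions whose reduct models $\phi$ and in which the Skolem functions act as witnesses; closure under substructure and $\mu$-directed unions then rests on the (suppressed) lemma that Skolem-closed substructures are $\Phi$-elementary. You instead give an explicit \emph{universal axiomatization}: Morleyization predicates $R_\psi$ for the at most $\lambda$-many subformulas of a negation normal form of $\phi$ (each with fewer than $\mu$ free variables, so the vocabulary stays $\mu$-ary), together with Skolem and dual Skolem functions, all governed by $\forall$-prefixed quantifier-free $\Ll_{\lambda^+,\mu}$-sentences. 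This buys syntactic transparency --- closure under substructure and $\mu$-directed unions is immediate for universal sentences with $<\mu$-ary prefixes --- at the cost of a larger vocabulary, and it replaces the elementarity lemma by a one-directional subformula induction showing that $R_\psi^{M^+}(\bar{a})$ implies $M \models \psi(\bar{a})$. One small remark: the dual Skolem functions $g_\psi$ and the ``introduction'' halves of your axioms are never actually used. Your soundness induction consumes only the elimination directions ($R_{\bigwedge} \to \bigwedge R$, $R_{\bigvee} \to \bigvee R$, $R_{\exists} \to R(\cdot, f(\cdot))$, and $R_{\forall}(\bar{y}) \to R_\psi(\bar{y},\bar{x})$, all of which are already universal), and surjectivity on objects is secured by interpreting each $R_\psi$ as the full truth set. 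So your closing concern about the universal quantifier is moot for the implication you actually prove: negation normal form is still what keeps the induction monotone, but the $g_\psi$'s are harmless decoration rather than a necessary fix.
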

\begin{proof}[Proof sketch]
  Let $\Phi$ be a fragment (i.e.\ set of formulas in $\Ll_{\lambda^+, \mu}$ closed under subformulas) which contains $\phi$ and has cardinality at most $\lambda$. Add a Skolem function for each formula in $\Phi$, forming a vocabulary $\tau^+$, and let $K^+$ be the set of all $\tau^+$-structures whose reduct is a model of $\phi$ and where the Skolem functions perform as expected. 
\end{proof}

The result below was stated for $\mu = \aleph_0$ in \cite[2.5]{ct-accessible-jsl}, but the proof easily generalizes.

\begin{fact}[{\cite[2.5]{ct-accessible-jsl}}]\label{mu-acc}
  If $\ck$ is an accessible category with $\mu$-directed colimits and all morphisms monos, there exists a $\mu$-accessible category $\cL$ and a faithful essentially surjective functor $F: \cL \to \ck$ preserving $\mu$-directed colimits.  In fact, if $\ck$ is $\lambda$-accessible, $\cL$ is the free completion under $\mu$-directed colimits of the full subcategory of $\ck$ induced by its $\lambda$-presentable objects.
\end{fact}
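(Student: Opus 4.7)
My plan is to realize $\cL$ as the free completion under $\mu$-directed colimits of a small full subcategory of $\ck$, with $F$ arising from the universal property of that completion.

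First, I would fix a regular $\lambda \ge \mu$ for which $\ck$ is $\lambda$-accessible (enlarging $\lambda$ via Fact~\ref{raising-acc} if necessary), and let $\ck_\lambda$ be a skeleton of the full subcategory of $\ck$ on its $\lambda$-presentable objects. Setting $\cL := \operatorname{Ind}_\mu(\ck_\lambda)$, standard theory (e.g.~\cite{adamek-rosicky}) gives that $\cL$ is $\mu$-accessible, that the objects of $\ck_\lambda$ become $\mu$-presentable in $\cL$, and that the inclusion $\ck_\lambda \hookrightarrow \ck$ extends essentially uniquely to a functor $F\colon \cL \to \ck$ preserving $\mu$-directed colimits (using that $\ck$ has $\mu$-directed colimits).

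The routine verifications come next. Preservation of $\mu$-directed colimits is built into the construction of $F$. Essential surjectivity holds because every object of $\ck$ is a $\lambda$-directed (hence, as $\lambda \ge \mu$, $\mu$-directed) colimit of $\lambda$-presentables, and any such diagram represents an object of $\cL$ sent by $F$ to the original.

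The main obstacle is faithfulness of $F$, and this is precisely where the monomorphism hypothesis on $\ck$ is used. Given $f, g\colon M \to N$ in $\cL$ with $F(f) = F(g)$, I would write $M$ and $N$ as $\mu$-directed colimits of diagrams $\seq{M_i : i \in I}$ and $\seq{N_j : j \in J}$ in $\ck_\lambda$, with cocones $c_i\colon M_i \to M$ and $d_j\colon N_j \to N$ in $\cL$. Since the $c_i$ are jointly epic in $\cL$, it suffices to prove $f \circ c_i = g \circ c_i$ for each $i$. The object $M_i$ is $\mu$-presentable in $\cL$, so each of $f \circ c_i$ and $g \circ c_i$ factors through some $N_j$; using $\mu$-directedness of $J$, I can pick a common $j(i) \in J$ and morphisms $\tilde{f}_i, \tilde{g}_i\colon M_i \to N_{j(i)}$ in $\ck_\lambda$ with $f \circ c_i = d_{j(i)} \circ \tilde{f}_i$ and $g \circ c_i = d_{j(i)} \circ \tilde{g}_i$. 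Applying $F$ and invoking $F(f) = F(g)$ yields $F(d_{j(i)}) \circ \tilde{f}_i = F(d_{j(i)}) \circ \tilde{g}_i$ in $\ck$. Since every morphism of $\ck$ is a monomorphism, $F(d_{j(i)})$ is mono, forcing $\tilde{f}_i = \tilde{g}_i$, and hence $f \circ c_i = g \circ c_i$ as required. The main subtlety is the reindexing to a common target $N_{j(i)}$, which is a standard use of $\mu$-directedness of $J$ together with the $\mu$-presentability of $M_i$ in $\cL$; once this bookkeeping is in place, the monomorphism hypothesis does the decisive work.
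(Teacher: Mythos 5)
Your proof is correct and follows exactly the construction that the statement of the Fact itself prescribes: $\cL$ is the free completion $\operatorname{Ind}_\mu$ of (a skeleton of) the $\lambda$-presentables, $F$ is induced by the universal property, and the monomorphism hypothesis yields faithfulness by cancelling the colimit coprojections $F(d_{j(i)})$, just as in the $\mu=\aleph_0$ argument of \cite[2.5]{ct-accessible-jsl} that the paper cites without reproving. The only point worth flagging is minor and you already handle it: one must take $\lambda\ge\mu$ (raising the accessibility index if needed) so that the canonical $\lambda$-directed presentations are in particular $\mu$-directed, which is what makes $F$ essentially surjective.
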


\begin{thm}[The presentation theorem for $\mu$-AECs]\label{pres-thm}
  If $\ck$ is an accessible category with $\mu$-directed colimits and all morphisms monos, then there exists a $\mu$-universal class $\cL$ and an essentially surjective faithful functor $F: \cL \to \ck$ preserving $\mu$-directed colimits.
\end{thm}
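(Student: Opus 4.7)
The plan is to chain the three facts stated immediately before the theorem.

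First, I would apply Fact \ref{mu-acc} to reduce to the $\mu$-accessible case: since $\ck$ is accessible with $\mu$-directed colimits and monos, there is a $\mu$-accessible category $\cL_1$ together with a faithful, essentially surjective functor $F_1 : \cL_1 \to \ck$ preserving $\mu$-directed colimits. So it suffices to produce a $\mu$-universal class $\cL$ and a functor $F_2 : \cL \to \cL_1$ with the same three properties (faithful, essentially surjective, and $\mu$-directed-colimit-preserving); the desired $F$ is then the composite $F_1 \circ F_2$, which inherits all three properties from its factors.

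Next, I would apply Fact \ref{acc-mod-phi}: up to equivalence of categories, $\cL_1 = \Mod(\phi)$ for some $\Ll_{\infty,\mu}$-sentence $\phi$ in some vocabulary $\tau$. Pick any cardinal $\lambda \geq \mu$ large enough that $\phi$ is actually an $\Ll_{\lambda^+, \mu}$-sentence (such a $\lambda$ exists by inspection of $\phi$). I can now apply Fact \ref{skolem}: it produces an expanded vocabulary $\tau^+ \supseteq \tau$ and a $\mu$-universal class $K^+$ in vocabulary $\tau^+$ such that the reduct functor $R : K^+ \to \Mod(\phi)$ is faithful, surjective on objects, and preserves $\mu$-directed colimits. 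Setting $\cL := K^+$ and composing $R$ with the equivalence $\Mod(\phi) \simeq \cL_1$ gives the required $F_2 : \cL \to \cL_1$ (surjectivity on objects combined with the equivalence gives essential surjectivity).

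The whole argument is really just bookkeeping, so there is no serious obstacle; the only place where one must be a little careful is in verifying that the three preservation/surjectivity properties compose correctly. Faithfulness and preservation of $\mu$-directed colimits are closed under composition automatically, and essential surjectivity of the composite follows from essential surjectivity of $F_1$ together with the fact that $R$, viewed through the equivalence $\Mod(\phi) \simeq \cL_1$, hits every isomorphism class in $\cL_1$. The final $F := F_1 \circ (\text{equivalence}) \circ R : \cL \to \ck$ is then the desired functor from a $\mu$-universal class.
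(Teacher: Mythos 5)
Your proof is correct and follows essentially the same route as the paper's: reduce to a $\mu$-accessible category via Fact \ref{mu-acc}, axiomatize it by an $\Ll_{\infty,\mu}$-sentence via Fact \ref{acc-mod-phi}, Skolemize via Fact \ref{skolem}, and compose. The only (inessential for the stated conclusion) difference is that the paper additionally observes that, since all morphisms of $\ck$ are monos, one may assume non-equality is part of the vocabulary of $\phi$.
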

\begin{proof}
  By Fact \ref{mu-acc}, there exists a $\mu$-accessible category $\ck^1$ and a faithful essentially surjective functor $F^1 : \ck^1 \to \ck$. By Fact \ref{acc-mod-phi}, $\ck^1$ is equivalent to $\Mod (\phi)$, for some $\Ll_{\infty, \mu}$-sentence $\phi$. Since all morphisms are monos, we may assume that non-equality is part of the vocabulary of $\phi$. By Fact \ref{skolem}, we can find a $\mu$-universal class $K^+$ so that the reduct map $F^0 : K^+ \to \Mod (\phi)$ is an essentially surjective faithful functor preserving $\mu$-directed colimits. Set $\cL := K^+$, $F := F^1 \circ F^0$.
\end{proof}

Note that if we apply Theorem \ref{pres-thm} to a $\mu$-AEC $\K$, the functor is \emph{not} directly given by a reduct from some expansion of $\K$ (we first have to pass through several equivalence of categories). Thus Theorem \ref{pres-thm} does not immediately prove (as in \cite{tamelc-jsl} for AECs) that $\mu$-AECs are closed under sufficiently complete ultraproducts. For this, we will prove that a certain functorial expansion of the $\mu$-AEC is axiomatizable by an infinitary logic (\emph{without} passing to an equivalent category):

\begin{defin}
  Let $\K$ be a $\mu$-AEC. The \emph{substructure functorial expansion} of $\K$ is the abstract class $\K^+$ defined as follows:

  \begin{enumerate}
  \item $\tau (\K^+) = \tau (\K) \cup \{P\}$, where $P$ is an $\LS (\K)$-ary predicate.
  \item $M^+ \in \K^+$ if and only if $M^+ \rest \tau (\K) \in \K$ and for any $\ba \in \fct{\LS (\K)}{M^+}$, $P^{M^+} (\ba)$ holds if and only if $\ran (\ba) \lea M^+ \rest \tau (\K)$, where we see $\ran (\ba)$ as a $\tau (\K)$-structure.
  \item For $M^+, N^+ \in \K^+$, $M^+ \leap{\K^+} N^+$ if and only if $M^+ \rest \tau (\K) \lea N^+ \rest \tau (\K)$.
  \end{enumerate}
\end{defin}

The substructure expansion is ``functorial'' in the sense of \cite[3.1]{sv-infinitary-stability-afml}: the reduct functor gives an isomorphism of concrete categories. The substructure functorial expansion has the property of having very simple morphisms:

\begin{thm}\label{substr-mc}
  Let $\K$ be a $\mu$-AEC and let $\K^+$ be its substructure functorial expansion. If $M^+, N^+ \in \K^+$ are such that $M^+ \subseteq N^+$, then $M^+ \leap{\K^+} N^+$.
\end{thm}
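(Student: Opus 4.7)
The plan is to unpack the definition of the substructure functorial expansion and then invoke the chain axiom of the $\mu$-AEC $\K$. Set $M := M^+ \rest \tau(\K)$ and $N := N^+ \rest \tau(\K)$; both are in $\K$ and the inclusion $M^+ \subseteq N^+$ of $\tau(\K^+)$-structures gives $M \subseteq N$ as $\tau(\K)$-structures. The point of the expansion is that, for any $\ba \in \fct{\LS(\K)}{M^+}$, the biconditional $P^{M^+}(\ba) \Leftrightarrow P^{N^+}(\ba)$ holds automatically, since $\ba$ is a tuple from $M^+$ and substructure inclusion preserves relations on such tuples. Our target is $M \lea N$.

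The next step is to write $M$ as a $\mu$-directed union of small $\K$-substructures. Let $\mathcal{M}$ be the set of $M_0 \in \K$ with $M_0 \lea M$, $U M_0 \neq \emptyset$, and $|U M_0| \le \LS(\K)$, ordered by $\lea$ (equivalently, by inclusion of universes, using coherence). A standard argument from the LST axiom shows that $\mathcal{M}$ is $\mu$-directed with union $M$: given strictly fewer than $\mu$ members, their combined universes have size at most $\LS(\K)$, so the LST axiom produces a common $\lea$-upper bound inside $\mathcal{M}$; and every element of $U M$ lies in some $M_0 \in \mathcal{M}$ by applying LST to a singleton.

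Now, for each $M_0 \in \mathcal{M}$, fix an enumeration $\ba_0 \in \fct{\LS(\K)}{M_0}$ with $\ran(\ba_0) = U M_0$ (repeating elements if $|U M_0| < \LS(\K)$). By the defining condition of $P$, the statement $\ran(\ba_0) \lea M$ is equivalent to $P^{M^+}(\ba_0)$, which holds by construction. Transferring via $M^+ \subseteq N^+$ yields $P^{N^+}(\ba_0)$, and applying the defining condition of $P$ inside $N^+$ gives $M_0 = \ran(\ba_0) \lea N$.

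Finally, apply chain axiom (c.iii) of the $\mu$-AEC to the $\mu$-directed system $\mathcal{M}$ (whose union is $M$): since $M_0 \lea N$ for every $M_0 \in \mathcal{M}$, we conclude $M \lea N$, which is by definition $M^+ \leap{\K^+} N^+$. There is no real obstacle; the $\LS(\K)$-ary predicate $P$ is designed precisely so that substructure inclusion transfers the information ``I am the universe of a $\K$-substructure'' from $M^+$ to $N^+$, which is exactly what the chain axiom needs to promote the pointwise facts $M_0 \lea N$ to $M \lea N$.
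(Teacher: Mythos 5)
Your proof is correct and follows essentially the same route as the paper's: reduce $M \lea N$ to showing $M_0 \lea N$ for each $\K$-substructure $M_0 \lea M$ of cardinality at most $\LS(\K)$, transfer the predicate $P$ along the inclusion $M^+ \subseteq N^+$ to get each such $M_0 \lea N$, and then take the directed union. The extra detail you supply (verifying $\mu$-directedness of the system of small substructures via the LST and coherence axioms) is exactly what the paper's parenthetical ``take the $\LS(\K)^+$-directed union'' is implicitly relying on.
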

\begin{proof}
  For $M \in \K$, write $M^+$ for the expansion of $M$ to $\K^+$. Let $M, N \in \K$ and assume that $M^+ \subseteq N^+$. We have to see that $M \lea N$. For this, it is enough to show that for any $M_0 \lea M$ of cardinality at most $\LS (\K)$, we also have that $M_0 \lea N$ (indeed, we can then take the $\LS (\K)^+$-directed union of all such $M_0$'s). So let $M_0 \lea M$ have cardinality at most $\LS (\K)$: we must show that $M_0 \lea N$. Let $\ba$ be an enumeration of $M_0$. We have that $M^+ \models P[\ba]$ (where $P$ is the additional predicate in $\tau (\K)^+$), so $N^+ \models P[\ba]$ (as $M^+$ is a substructure of $N^+$). This means that $M_0 \lea N$, as desired.
\end{proof}

The substructure functorial expansion of a $\mu$-AEC can be axiomatized (a more complication variation of this, for AECs, is due to Baldwin and Boney \cite[3.9]{baldwin-boney}). Since the ordering is trivial by the previous result, this shows that any $\mu$-AEC is isomorphic (as a category) to the category of models of an $\Ll_{\infty, \infty}$ sentence, where the morphisms are injective homomorphisms.

\begin{thm}\label{axiom-mu-aec}
  Let $\K$ be a $\mu$-AEC and let $\K^+$ be its substructure functorial expansion. There is an $\Ll_{\left(2^{\LS (\K)}\right)^+, \LS (\K)^+}$ sentence $\phi$ such that $\K^+$ is the class of models of $\phi$. 
\end{thm}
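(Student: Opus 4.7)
Set $\chi := \LS(\K)$. The plan is to build $\phi$ as a conjunction of four axioms that simultaneously force $M^+\rest\tau(\K)$ to lie in $\K$ and pin down $P$ as the $\lea$-indicator. The crucial count is that since $|\tau(\K)|\le\chi$ and every symbol has arity $<\mu\le\chi$, the set of atomic $\tau(\K)$-formulas in $\chi$ variables has cardinality $\chi^{<\mu}=\chi$. Consequently the set $\mathcal{T}$ of isomorphism types of $\K$-structures of cardinality $\le\chi$, and the set $\mathcal{S}$ of isomorphism types of pairs $(N_1,N_2)$ with $N_1\lea N_2$ in $\K$, $|U N_2|\le\chi$, and $U N_1\subseteq U N_2$, both have cardinality at most $2^\chi$.

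For each $t\in\mathcal{T}$ I would fix a representative $N_t$ and define $\phi_t(\bx)$ (with $\bx$ of length $\chi$) as the disjunction, over all surjections $f$ from $\chi$ onto $U N_t$, of the $\chi$-conjunction of atomic and negated atomic formulas expressing that identifying $x_\alpha$ with $x_\beta$ exactly when $f(\alpha)=f(\beta)$ turns $\bx$ into an enumeration of $N_t$ realizing its full atomic diagram. This automatically encodes closure of $\ran(\bx)$ under the $\tau(\K)$-operations together with carrying a $\K$-structure of type $t$. Defining $\psi_s(\bx,\by)$ analogously for $s\in\mathcal{S}$ and writing $\iota(\bx,\by):=\bigwedge_{\alpha<\chi}\bigvee_{\beta<\chi}x_\alpha=y_\beta$ for ``$\ran(\bx)\subseteq\ran(\by)$'', each of $\phi_t$, $\psi_s$, $\iota$ lies in $\Ll_{(2^\chi)^+,\chi^+}$.

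Take $\phi$ to be the conjunction of:
\begin{itemize}
\item[(C1)] $\forall\bx\bigl(P(\bx)\to\bigvee_{t\in\mathcal{T}}\phi_t(\bx)\bigr)$;
\item[(C2)] $\forall\bx\forall\by\bigl((P(\bx)\land P(\by)\land\iota(\bx,\by))\to\bigvee_{s\in\mathcal{S}}\psi_s(\bx,\by)\bigr)$;
\item[(C3)] $\forall\bx\exists\by\bigl(P(\by)\land\iota(\bx,\by)\bigr)$;
\item[(C4)] $\forall\bx\forall\by\bigl((P(\by)\land\iota(\bx,\by)\land\bigvee_{s\in\mathcal{S}}\psi_s(\bx,\by))\to P(\bx)\bigr)$.
\end{itemize}
Each quantifier block binds $\chi$ variables and each boolean combination has length $\le 2^\chi$, so $\phi\in\Ll_{(2^\chi)^+,\chi^+}$. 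The inclusion $\K^+\subseteq\Mod(\phi)$ is immediate: (C1) follows from $P(\bar a)\Rightarrow\ran(\bar a)\lea M^+\rest\tau(\K)\in\K$; (C2) is the coherence axiom of $\mu$-AECs applied to the two $\K$-substructures $\ran(\bx),\ran(\by)$; (C3) is the LST axiom; (C4) combines transitivity with invariance of $\lea$ under isomorphism.

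For the reverse inclusion $\Mod(\phi)\subseteq\K^+$, given $M^+\models\phi$ set $M:=M^+\rest\tau(\K)$ and let $\mathcal{N}$ be the family of $\tau(\K)$-substructures of $M$ whose universe is $\ran(\bar a)$ for some $P^{M^+}$-satisfying $\bar a$. By (C1) each member of $\mathcal{N}$ lies in $\K$; by (C2) the inclusion order on $\mathcal{N}$ refines $\lea$; by (C3) $\mathcal{N}$ is $\chi^+$-directed (hence $\mu$-directed) with $\bigcup\mathcal{N}=M$. The chain axioms then give $M\in\K$ with each $N\in\mathcal{N}$ satisfying $N\lea M$, so $P^{M^+}(\bar a)$ implies $\ran(\bar a)\lea M$. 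Conversely, if $\ran(\bar a)\lea M$, (C3) supplies $\bar b$ covering $\bar a$ with $P^{M^+}(\bar b)$; coherence in $\K$ yields $\ran(\bar a)\lea\ran(\bar b)$; and (C4) then delivers $P^{M^+}(\bar a)$. I expect the principal obstacle to be this converse direction: without (C4) the class $\Mod(\phi)$ would properly contain $\K^+$, and forcing the converse requires explicitly enumerating the isomorphism types $\mathcal{S}$ of $\lea$-pairs, which is precisely what pushes the logic up to $\Ll_{(2^\chi)^+,\chi^+}$.
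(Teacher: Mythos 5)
Your proposal is correct, and at its core it follows the same strategy as the paper: quantify over $\LS(\K)$-tuples, code the isomorphism types of the $\le\LS(\K)$-sized members of $\K$ and of the $\lea$-pairs among them by $\Ll_{(2^{\LS(\K)})^+,\LS(\K)^+}$-formulas, and write down axioms tying the predicate $P$ to this data. The cardinality bookkeeping ($\chi^{<\mu}=\chi$ by the LST axiom, hence at most $2^\chi$ types and disjunctions of size $\le 2^\chi$) matches the paper's.

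The genuinely valuable difference is your axiom (C4) (together with making (C1) explicit). The paper's sentence is just the conjunction of your (C3)-with-a-type-disjunction-on-the-witness and your (C2); its proof of the converse inclusion ends with the bare assertion that ``$P^{M^+}(\ba)$ holds if and only if $\ran(\ba)\lea M$.'' The ``only if'' direction does follow (via $\phi_2$ applied to $\bx=\by$ and the chain axioms), but the ``if'' direction does not: nothing in $\phi_1\land\phi_2$ forces $P$ to hold of \emph{every} tuple enumerating a $\lea$-substructure. Indeed, starting from any $M^+\in\K^+$ and shrinking $P^{M^+}$ to a cofinal proper subfamily of the small $\lea$-substructures yields a model of the paper's $\phi$ that is not in $\K^+$ (e.g.\ in the AEC of sets under inclusion, interpret $P$ only on tuples with infinite range). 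Your (C4) --- ``if $\ran(\bx)$ sits as a $\lea$-substructure inside some $P$-marked $\K$-structure, then $P(\bx)$'' --- combined with coherence is exactly what closes this gap, and your verification of both inclusions with the four axioms is sound. So you have not merely reproduced the paper's argument; you have identified and repaired a real omission in it. One could alternatively weaken the theorem to an isomorphism of categories between $\K$ and a suitable reduct of $\Mod(\phi)$, but for the statement as given your extra axiom is needed.
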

\begin{proof}
  First note that for each $M_0 \in \K_{\le \LS (\K)}$, there is a sentence $\psi_{M_0} (\bx)$ of $\Ll_{\LS (\K)^+, \LS (\K)^+}$ coding its isomorphism type, i.e.\ whenever $M \models \psi_{M_0}[\ba]$, then $\ba$ is an enumeration of an isomorphic copy of $M_0$. Similarly, whenever $M_0, M_1$ are in $\K_{\le \LS (\K)}$ with $M_0 \lea M_1$, there is $\psi_{M_0, M_1} (\bx, \by)$ that codes that $(\bx, \by)$ is isomorphic to $(M_0, M_1)$ (so in particular $\bx \lea \by$). Let $S$ be a complete set of members of $\K_{\le \LS (\K)}$ (i.e.\ any other model is isomorphic to it) and let $T$ be a complete set of pairs $(M_0, M_1)$, with each in $\K_{\le \LS (\K)}$, such that $M_0 \lea M_1$. Now define the following:

  $$
  \phi_1 = \forall \bx \exists \by \left( \left( \bigvee_{M_0 \in S} \psi_{M_0} (\by)\right) \land \bx \subseteq \by \land P (\by) \right)
  $$

  $$
  \phi_2 = \forall \bx \forall \by \left( \left(\bx \subseteq \by \land P (\bx) \land P (\by)\right) \rightarrow \bigvee_{(M_0, M_1) \in T} \psi_{M_0, M_1} (\bx, \by)\right)
  $$

  $$\phi = \phi_1 \land \phi_2$$

  Where $\bx \subseteq \by$ abbreviates the obvious formula. This works. First, any $M^+ \in \K^+$ satisfies $\phi_1$ by the LST axiom and satisfies $\phi_2$ by the coherence axiom. Conversely, assume that $M^+ \models \phi$ and let $M := M^+ \rest \tau (\K)$. Consider the set:

  $$
  I = \{M_0 \in \K_{\le \LS (\K)} \mid U M_0 \subseteq U M, P^{M^+} (\bar{M_0})\}
  $$

  Where $\bar{M_0}$ refers to some enumeration of $M_0$. Then by construction of $\phi$, $I$ is a $\mu$-directed system in $\K$ and $\bigcup I = M$, so $M \in \K$. Similarly, $P^{M^+} (\ba)$ holds if and only if $\ran (\ba) \lea M$, so $M^+ \in \K^+$.
\end{proof}

\begin{cor}
  Any $\mu$-AEC $\K$ is closed under $\left(2^{\LS (\K)}\right)^+$-complete ultraproducts (in the sense that the appropriate generalization of \cite[4.3]{tamelc-jsl} holds).
\end{cor}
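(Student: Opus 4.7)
The plan is to deduce this corollary from Theorem~\ref{axiom-mu-aec} in essentially the same way that \cite[4.3]{tamelc-jsl} is deduced from its presentation theorem, using Łoś's theorem for infinitary logic with sufficiently complete ultrafilters. Concretely, suppose $\seq{M_i : i \in I}$ is a family of members of $\K$ and $U$ is a $\left(2^{\LS(\K)}\right)^+$-complete ultrafilter on $I$. I would first pass to the substructure functorial expansion $\K^+$, replacing each $M_i$ by its canonical expansion $M_i^+ \in \K^+$. By Theorem~\ref{axiom-mu-aec}, there is a single $\Ll_{(2^{\LS(\K)})^+, \LS(\K)^+}$-sentence $\phi$ whose models are exactly $\K^+$.

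Next, I would form the ordinary first-order ultraproduct $N^+ := \prod_U M_i^+$ of the expanded structures in the vocabulary $\tau(\K)^+$. The key point is the infinitary Łoś theorem: a $\kappa$-complete ultrafilter preserves satisfaction of sentences in $\Ll_{\kappa, \kappa}$, because both the conjunctions/disjunctions (of size $<\kappa$) and the blocks of quantifiers (of length $<\kappa$) can be handled using the completeness assumption. Since $U$ is $\left(2^{\LS(\K)}\right)^+$-complete, it preserves $\phi$, hence $N^+ \models \phi$ and therefore $N^+ \in \K^+$. Setting $N := N^+ \rest \tau(\K)$, we obtain $N \in \K$, giving the desired closure of $\K$ under such ultraproducts.

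To get the ``appropriate generalization'' of \cite[4.3]{tamelc-jsl} — namely that the canonical ultrapower-style embeddings of factors into the ultraproduct (or more generally, embeddings induced by ultraproducts of commutative diagrams) are $\K$-embeddings — I would invoke Theorem~\ref{substr-mc}. The canonical maps $M_i^+ \to N^+$ coming from the ultraproduct construction are injective $\tau(\K)^+$-homomorphisms (injectivity follows from having non-equality in the vocabulary, which we have since all morphisms of $\K$ are monos). Thus each image is a $\tau(\K)^+$-substructure of $N^+$ lying in $\K^+$, and Theorem~\ref{substr-mc} then upgrades this substructure inclusion to a $\leap{\K^+}$-inclusion, which after reduct becomes a $\lea$-embedding in $\K$. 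The same argument applied pointwise handles ultraproducts of coherent diagrams.

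I expect the main (mild) obstacle to be simply unpacking what the ``appropriate generalization'' of \cite[4.3]{tamelc-jsl} should say in this $\mu$-ary setting and verifying that the infinitary Łoś theorem applies to $\phi$ exactly at the completeness level $\left(2^{\LS(\K)}\right)^+$ dictated by its quantifier rank and conjunction width. Once the statement is pinned down, no further model- or category-theoretic work is needed: the axiomatizability result and the rigidity of $\leap{\K^+}$ from the previous section do all the heavy lifting.
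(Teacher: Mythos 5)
Your proposal is correct and matches the paper's own (one-line) proof exactly: both arguments pass to the substructure functorial expansion, apply Theorem \ref{axiom-mu-aec} together with the infinitary Łoś theorem for sufficiently complete ultrafilters, use that reducts commute with ultraproducts, and invoke Theorem \ref{substr-mc} to upgrade the canonical substructure inclusions to $\lea$-embeddings. No gaps.
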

\begin{proof}
  By Theorems \ref{substr-mc} and \ref{axiom-mu-aec}, Łoś' theorem, and the fact that taking reducts commutes with ultraproducts.
\end{proof}

\section{On successor presentability ranks}

We start our study of the existence spectrum of an accessible category $\ck$: the set of regular cardinals $\lambda$ such that $\ck$ has an object of presentability rank $\lambda$. The goal is to say as much as possible by just looking at the accessibility spectrum: the set of cardinals $\lambda$ such that $\ck$ is $\lambda$-accessible.

In this section, we consider the question, first systematically investigated in \cite{beke-rosicky}, of whether the presentability rank of an object always has to be a successor (or, said differently, whether there can be objects of weakly inaccessible presentability rank). Assuming the accessibility spectrum is sufficiently large, we show there are \emph{no} objects of weakly inaccessible presentability rank, and explain how this generalizes previous results.

The following easy lemma characterizes existence in terms of the accessibility spectrum. It will also be used in the next section: 

\begin{lem}\label{basic-constr-lem}
  Let $\lambda$ be a regular cardinal and let $\ck$ be a category. The following are equivalent:

  \begin{enumerate}
  \item $\ck$ is $(\lambda, <\lambda)$-accessible.
  \item $\ck$ is $\lambda$-accessible and has \emph{no} objects of presentability rank $\lambda$.
  \end{enumerate}
\end{lem}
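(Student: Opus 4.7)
The plan is to prove both directions directly from the definitions and Fact~\ref{sys-basic-facts}.

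For $(1) \Rightarrow (2)$, first observe that since any $(<\lambda)$-presentable object is in particular $\lambda$-presentable (equivalently, $(<\lambda^+)$-presentable), every $(\lambda, <\lambda)$-system is a $(\lambda, \lambda)$-system. Hence $(\lambda,<\lambda)$-accessibility immediately yields $(\lambda, \lambda)$-accessibility, i.e.\ $\lambda$-accessibility. Suppose, toward a contradiction, that some $M \in \ck$ has presentability rank exactly $\lambda$; in particular $M$ is $\lambda$-presentable but not $(<\lambda)$-presentable. By hypothesis, $M$ is the colimit of some $(\lambda, <\lambda)$-system. Since $M$ is $\lambda$-presentable, Fact~\ref{sys-basic-facts}(\ref{sys-basic-facts-2})(a) says the system is not proper, but then Fact~\ref{sys-basic-facts}(\ref{sys-basic-facts-2})(b) says $M$ is $(<\lambda)$-presentable, a contradiction.

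For $(2) \Rightarrow (1)$, assume $\ck$ is $\lambda$-accessible and has no objects of presentability rank $\lambda$. The key observation is that any $\lambda$-presentable object $N$ has presentability rank some regular cardinal $\le \lambda$, and this rank cannot be $\lambda$ by hypothesis; so the rank is strictly less than $\lambda$, which makes $N$ a $(<\lambda)$-presentable object. In other words, under (2) the classes of $\lambda$-presentable and $(<\lambda)$-presentable objects coincide. Consequently the set of $\lambda$-presentables (up to isomorphism) supplied by $\lambda$-accessibility is a set of $(<\lambda)$-presentables, and any $(\lambda, \lambda)$-system witnessing that an object is a $\lambda$-directed colimit of $\lambda$-presentables is automatically a $(\lambda, <\lambda)$-system. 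Since $\lambda$-directed colimits exist by $\lambda$-accessibility, all three clauses of Definition~\ref{acccat-defs}(1) for $(\lambda,<\lambda)$-accessibility are verified. No real obstacle arises; the whole argument is a bookkeeping exercise once Fact~\ref{sys-basic-facts} is in hand (the one subtle point, that $(<\lambda)$-presentability collapses onto $\lambda$-presentability precisely when rank $\lambda$ is forbidden, is exactly what makes the equivalence clean).
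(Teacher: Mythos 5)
Your proof is correct and follows essentially the same route as the paper: for $(1)\Rightarrow(2)$ both arguments note that $\lambda$-accessibility is immediate and then use Fact~\ref{sys-basic-facts}(\ref{sys-basic-facts-2}) to show a $\lambda$-presentable colimit of a $(\lambda,<\lambda)$-system must already be $(<\lambda)$-presentable, and for $(2)\Rightarrow(1)$ both observe that forbidding rank $\lambda$ collapses $\lambda$-presentability onto $(<\lambda)$-presentability, so every $(\lambda,\lambda)$-system is a $(\lambda,<\lambda)$-system. Your write-up just spells out the bookkeeping in slightly more detail.
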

\begin{proof}
  Assume $\ck$ is $(\lambda, <\lambda)$-accessible. By definition, $\ck$ is clearly $\lambda$-accessible. If $M$ is a $\lambda$-presentable object of $\ck$, then it is a $\lambda$-directed colimit of $(<\lambda)$-presentable objects, hence by Fact \ref{sys-basic-facts} must itself be $(<\lambda)$-presentable.

  Conversely, if $\ck$ is $\lambda$-accessible and has no objects of presentability rank $\lambda$, then any $(\lambda, \lambda)$-system must be a $(\lambda, <\lambda)$-system, hence $\ck$ is $(\lambda, <\lambda)$-accessible.
\end{proof}

The following new result gives a criterion for $(\lambda, <\lambda)$-accessibility when $\lambda$ is weakly inaccessible.

\begin{thm}\label{wk-inacc-acc}
  If $\lambda$ is weakly inaccessible and $\ck$ is $(\mu, <\lambda)$-accessible for unboundedly-many $\mu < \lambda$, then $\ck$ is $(\lambda, <\lambda)$-accessible.
\end{thm}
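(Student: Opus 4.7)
The plan is to verify the three clauses of Definition~\ref{acccat-defs} for $(\lambda, <\lambda)$-accessibility. Fix any $\mu_0 < \lambda$ with $\ck$ being $(\mu_0, <\lambda)$-accessible. Clause~(a), existence of $\lambda$-directed colimits, follows from existence of $\mu_0$-directed colimits since $\mu_0 < \lambda$. Clause~(b), existence of a set of $(<\lambda)$-presentables up to isomorphism, is part of the definition of $(\mu_0, <\lambda)$-accessibility. The substantive clause is (c): every $M \in \ck$ must be realizable as the colimit of a $(\lambda, <\lambda)$-system.

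For (c), I would form the comma category $P(M)$ whose objects are pairs $(N, f)$ with $N$ a $(<\lambda)$-presentable object of $\ck$ and $f \colon N \to M$ a morphism, and whose arrows $(N, f) \to (N', f')$ are morphisms $g \colon N \to N'$ in $\ck$ satisfying $f' \circ g = f$. I then plan to show (i) $P(M)$ is $\lambda$-filtered, and (ii) the forgetful functor $P(M) \to \ck$, equipped with the tautological cocone to $M$, is colimiting. A standard poset-replacement of a $\lambda$-filtered category (see \cite[1.5]{adamek-rosicky}) then converts this into the required $(\lambda, <\lambda)$-system with colimit $M$.

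The key step is (i). Given $D \colon I \to P(M)$ with $|I| < \lambda$, write $(N_i, f_i) = D(i)$ and fix regular $\theta_i < \lambda$ with $N_i$ being $\theta_i$-presentable. Since $\lambda$ is weakly inaccessible (hence a regular limit cardinal), $\theta := \sup_i \theta_i + |I|^+$ is strictly below $\lambda$. By the unboundedness hypothesis, I can select a regular $\mu \in [\theta, \lambda)$ such that $\ck$ is $(\mu, <\lambda)$-accessible. Then $M$ is a $\mu$-directed colimit of $(<\lambda)$-presentables with cocone $(d_j \colon N'_j \to M)_{j \in J}$, and each $N_i$ is $\mu$-presentable, so $f_i = d_{j_i} \circ g_i$ for some $j_i \in J$ and $g_i \colon N_i \to N'_{j_i}$. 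For each arrow $\alpha \colon i \to i'$ of $I$, essential uniqueness of factorization through the $\mu$-directed colimit, applied to the two factorizations of $f_i$ given by $d_{j_i} \circ g_i$ and $d_{j_{i'}} \circ g_{i'} \circ D(\alpha)$, furnishes some $j_\alpha \in J$ above $j_i$ and $j_{i'}$ equalizing them after composition with transition maps. Since $\{j_i\} \cup \{j_\alpha\}$ has cardinality at most $|I|^2 + |I| < \mu$ and $J$ is $\mu$-directed, I can pick $j^\ast \in J$ above all of them; the composites $h_i := d_{j_i, j^\ast} \circ g_i$ then define a cocone from $D$ to $(N'_{j^\ast}, d_{j^\ast}) \in P(M)$. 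Clause~(ii) is then a routine universal-property verification based on the same factorization argument applied to a single $(N, f) \in P(M)$, with $\mu$ chosen above the presentability rank of $N$.

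The main obstacle is the bookkeeping in step (i): because $P(M)$ is a comma category rather than a mere poset, I must ensure that the morphisms of $D$, and not only its objects, are simultaneously realized through $(N'_{j^\ast}, d_{j^\ast})$. This is exactly where the essential-uniqueness clause in the definition of $\mu$-presentability is needed, and where the freedom to take $\mu$ arbitrarily large below $\lambda$ — granted by weak inaccessibility of $\lambda$ together with the unboundedness hypothesis — becomes indispensable.
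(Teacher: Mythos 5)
Your proof is essentially correct and its core mechanism is the same as the paper's, but the packaging is genuinely different. The paper does not pass through the canonical comma category $P(M)$: it glues only the \emph{chosen} systems, forming the poset $I = \{(i,\mu) \mid \mu \in S,\ i \in I_\mu\}$ with $(i,\mu_1) \le (j,\mu_2)$ iff $\mu_1 \le \mu_2$ and $f_i^{\mu_1}$ factors uniquely through $f_j^{\mu_2}$, and shows directly that $I$ is $\lambda$-directed using exactly the observation you use: each $(<\lambda)$-presentable $M_i^\mu$ is $\mu_1$-presentable for some $\mu_1 < \lambda$, and the unboundedness hypothesis supplies a $(\mu_2,<\lambda)$-system with $\mu_2 \ge \mu_1$ through which it factors. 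Your route (show $P(M)$ is $\lambda$-filtered, then invoke the poset replacement of \cite[1.5]{adamek-rosicky}) is the standard Ad\'amek--Rosick\'y-style argument; it buys you a cleaner treatment of the coherence of the factorizations (your use of essential uniqueness to handle the arrows of the index diagram is done more carefully than the paper's one-line ``quickly implies that $I$ is $\lambda$-directed''), at the cost of working with a larger diagram. Your handling of step (i) -- bounding $\sup_i\theta_i + |I|^+$ below $\lambda$ via regularity of the weakly inaccessible $\lambda$, choosing $\mu \in S$ above it, factoring through the $\mu$-system, and taking an upper bound $j^\ast$ in the $\mu$-directed index -- is correct; just make sure $\theta$ also dominates the number of \emph{morphisms} of $I$, since $\lambda$-filteredness quantifies over subcategories with fewer than $\lambda$ morphisms.

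The one place where you are more confident than you have earned is step (ii). The difficulty is not the factorization of a single $f \colon N \to M$: it is that the map $u \colon M \to X$ induced by restricting a cocone on $P(M)$ to the $\mu$-directed system $D_\mu$ depends a priori on $\mu$. Unlike the classical case (where every object of the canonical diagram is $\mu_0$-presentable and hence factors through the single given system), here an object of $D_{\mu}$ need not be $\mu'$-presentable for a smaller or incomparable $\mu'$, so the subsystems do not factor through one another and you cannot immediately conclude that $u_{\mu} = u_{\mu'}$; the ``same factorization argument'' only shows $x_{(N,f)} = u_\mu f$ for $\mu$ large \emph{depending on} $(N,f)$. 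This compatibility is exactly what is needed to see that the colimit of $P(M)$ is $M$ itself rather than merely an object retracting onto $M$. To be fair, the paper's own proof asserts ``with colimit $M$'' with no more justification than you give, so this is a shared elision rather than a defect peculiar to your argument --- but if you want a complete write-up, this is the step to expand, e.g.\ by arranging the chosen systems (or a transfinitely constructed $\lambda$-directed subdiagram of $P(M)$) so that each is genuinely cofinal in its successors.
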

\begin{proof}
  Let $S$ be the set of all regular cardinals $\mu < \lambda$ such that $\ck$ is $(\mu, <\lambda)$-accessible. Let $M$ be an object of $\ck$. For each $\mu \in S$, fix a $\mu$-directed system $\seq{M_i^\mu : i \in I_\mu}$, with maps $\seq{f_{i, j}^\mu: i \le j \in I_\mu}$ whose colimit is $M$ (with colimit maps $f_i^\mu$, $i \in I_\mu$). Let $I := \{(i, \mu) \mid \mu \in S, i \in I_\mu\}$. Order it by $(i, \mu_1) \le (j, \mu_2)$ if and only if $\mu_1 \le \mu_2$, and there exists a unique map $g: M_i^{\mu_1} \to M_j^{\mu_2}$ so that $f_i^{\mu_1} = f_{j}^{\mu_2} g$.

  Observe that $(I, \le)$ is a partial order. Also, if we fix $\mu \in S$ and $i \in I_\mu$, then $M_i^{\mu}$ is $(<\lambda)$-presentable, hence $\mu_1$-presentable for some regular $\mu_1 \in S$, $\mu \le \mu_1$. Thus for any $\mu_2 \in S$ with $\mu_1 \le \mu_2$, there exists $j \in I_{\mu_2}$ such that $(i, \mu) \le (j, \mu_2)$. 

  The last paragraph quickly implies that $I$ is $\lambda$-directed, and so the diagram induced by $I$ is the desired $(\lambda, <\lambda)$-system with colimit $M$.
\end{proof}
\begin{cor}\label{wk-inacc-acc-2}
  If $\lambda$ is weakly inaccessible and $\ck$ is $\mu$-accessible for unboundedly-many $\mu < \lambda$, then $\ck$ does not have an object of presentability rank $\lambda$. 
\end{cor}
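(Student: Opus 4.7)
The plan is to reduce the corollary to a direct application of Theorem \ref{wk-inacc-acc} and Lemma \ref{basic-constr-lem}. The key observation is that under the hypotheses of the corollary, the strong accessibility hypothesis of Theorem \ref{wk-inacc-acc} (namely $(\mu, <\lambda)$-accessibility for unboundedly-many $\mu < \lambda$) holds for free.

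First, I would unpack what $\mu$-accessibility gives for $\mu < \lambda$. By definition, $\ck$ being $\mu$-accessible means $\ck$ is $(\mu, <\mu^+)$-accessible. Since $\lambda$ is weakly inaccessible, it is in particular a limit cardinal, so $\mu^+ < \lambda$, and therefore any $(<\mu^+)$-presentable object is automatically $(<\lambda)$-presentable. Hence every $(\mu, <\mu^+)$-system is a $(\mu, <\lambda)$-system, which means $\ck$ is $(\mu, <\lambda)$-accessible. Since by hypothesis this holds for unboundedly-many $\mu < \lambda$, the hypotheses of Theorem \ref{wk-inacc-acc} are satisfied (noting that $\lambda$ is regular, being weakly inaccessible).

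Applying Theorem \ref{wk-inacc-acc}, I conclude that $\ck$ is $(\lambda, <\lambda)$-accessible. Finally, by Lemma \ref{basic-constr-lem} this is equivalent to $\ck$ being $\lambda$-accessible with no object of presentability rank $\lambda$, which gives the desired conclusion.

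There is no real obstacle; the corollary is essentially a repackaging of Theorem \ref{wk-inacc-acc} via Lemma \ref{basic-constr-lem}, with the only small point being the passage from $\mu$-accessibility to $(\mu, <\lambda)$-accessibility, which is immediate from weak inaccessibility of $\lambda$.
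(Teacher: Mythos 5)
Your proof is correct and follows exactly the paper's route: apply Theorem \ref{wk-inacc-acc} to get $(\lambda,<\lambda)$-accessibility and then conclude via Lemma \ref{basic-constr-lem}. The only difference is that you explicitly justify the passage from $\mu$-accessibility to $(\mu,<\lambda)$-accessibility (using that $\lambda$ is a limit cardinal), a step the paper leaves implicit.
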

\begin{proof}
  By Theorem \ref{wk-inacc-acc}, $\ck$ is $(\lambda, <\lambda)$-accessible. Now apply Lemma \ref{basic-constr-lem}.
\end{proof}

We obtain that any high-enough presentability rank in a well-accessible category (recall Definition \ref{acccat-defs}(\ref{well-acc-def})) must be a successor, which improves on \cite[4.2]{beke-rosicky} and \cite[5.5]{internal-sizes-jpaa}:

\begin{cor}\label{well-acc-succ}
  If $\ck$ is a well $\mu$-accessible category, then the presentability rank of any object that is not $\mu$-presentable must be a successor.
\end{cor}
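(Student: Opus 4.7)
The plan is to derive this directly from Corollary \ref{wk-inacc-acc-2} by a short contradiction argument. Suppose $M \in \ck$ is not $\mu$-presentable but $r_\ck(M) = \lambda$ is not a successor cardinal. Since presentability ranks are always regular cardinals, $\lambda$ would then have to be a regular limit cardinal, i.e.\ weakly inaccessible. Moreover, since $M$ is not $\mu$-presentable, we have $\lambda > \mu$.

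Next, I would use the well $\mu$-accessibility hypothesis: by Definition \ref{acccat-defs}(\ref{well-acc-def}), $\ck$ is $\theta$-accessible for \emph{every} regular $\theta \ge \mu$. Since $\lambda$ is weakly inaccessible and $\lambda > \mu$, there are unboundedly many regular cardinals in the interval $[\mu, \lambda)$, so $\ck$ is $\theta$-accessible for unboundedly-many $\theta < \lambda$. Corollary \ref{wk-inacc-acc-2} then asserts that $\ck$ cannot contain an object of presentability rank $\lambda$, contradicting $r_\ck(M) = \lambda$.

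There is essentially no obstacle here: all the work is already packaged in Corollary \ref{wk-inacc-acc-2} (which in turn rests on Theorem \ref{wk-inacc-acc} and Lemma \ref{basic-constr-lem}). The only point to verify carefully is that a regular non-successor cardinal is weakly inaccessible, which is immediate from the definition, and that unboundedly many regular cardinals lie below any weakly inaccessible cardinal, which is also immediate.
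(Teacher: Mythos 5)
Your proof is correct and is exactly the paper's argument: the paper's proof consists of the single line ``Immediate from Corollary \ref{wk-inacc-acc-2}'', and you have simply spelled out the routine details (a regular non-successor rank would be weakly inaccessible and exceed $\mu$, so well $\mu$-accessibility supplies unboundedly many regular accessibility indices below it, contradicting Corollary \ref{wk-inacc-acc-2}). Nothing further is needed.
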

\begin{proof}
  Immediate from Corollary \ref{wk-inacc-acc-2}.
\end{proof}

We have also recovered \cite[3.11]{internal-sizes-jpaa}:

\begin{cor}
  Let $\mu$ be a regular cardinal and let $\lambda > \mu$ be a weakly inaccessible cardinal. If $\ck$ is a $(\mu, <\lambda)$-accessible category and $\lambda$ is $\mu$-closed, then $\ck$ has no object of presentability rank $\lambda$.

  In particular, assuming $\ESCH$, high-enough presentability ranks are successors in any accessible category.
\end{cor}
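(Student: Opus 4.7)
The plan is to reduce the first statement to the results already established (Lemma \ref{basic-constr-lem} and Theorem \ref{wk-inacc-acc}) via a cardinal-arithmetic step using Fact \ref{raising-acc}. Since presentability ranks are regular, the ``in particular'' part will then follow by observing that $\ESCH$ forces every high-enough weakly inaccessible cardinal to be $\mu$-closed for the fixed index $\mu$ of accessibility.

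For the first statement, by Lemma \ref{basic-constr-lem} it suffices to show that $\ck$ is $(\lambda, <\lambda)$-accessible. By Theorem \ref{wk-inacc-acc}, this will follow if I can produce unboundedly-many regular $\theta < \lambda$ for which $\ck$ is $(\theta, <\lambda)$-accessible. The input is $(\mu, <\lambda)$-accessibility together with $\mu$-closure of $\lambda$. From Remark \ref{tlt-rmk} the $\mu$-closure of $\lambda$ gives $\mu \tlt \lambda$, and since $\lambda$ is weakly inaccessible the same remark supplies unboundedly-many successor (hence regular) cardinals $\theta < \lambda$ with $\mu \tlt \theta$. For each such $\theta$, Fact \ref{raising-acc} promotes $(\mu, <\lambda)$-accessibility to $(\theta, <(\lambda + \theta^+))$-accessibility; since $\theta < \lambda$ and $\lambda$ is a limit cardinal we have $\theta^+ \le \lambda$, so this is just $(\theta, <\lambda)$-accessibility. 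Feeding this into Theorem \ref{wk-inacc-acc} and then Lemma \ref{basic-constr-lem} finishes the first part.

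For the ``in particular'' statement, fix an accessible category $\ck$ and pick a regular $\mu$ such that $\ck$ is $\mu$-accessible. Under $\ESCH$, fix $\theta_0$ such that $\SCH$ holds at every cardinal $\ge \theta_0$. Suppose some object of $\ck$ has presentability rank $\lambda > \mu + \theta_0 + 2^{<\mu}$; since presentability ranks are regular, if $\lambda$ is not a successor it is weakly inaccessible. I need $\lambda$ to be $\mu$-closed so as to invoke the first part. This follows from Fact \ref{sch-basic}(2): for every $\theta < \lambda$, $\theta^{<\mu} \le \theta^+ + \sup_{\theta' < \theta_0} \theta'^{<\mu}$, and each summand is $< \lambda$ since $\lambda$ is a limit cardinal above $\theta_0$ and above $\sup_{\theta' < \theta_0} \theta'^{<\mu}$ (as $2^{<\mu} < \lambda$ and $\theta_0 < \lambda$). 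Noting also that $\ck$ is in particular $(\mu, <\lambda)$-accessible, the first part yields a contradiction, so $\lambda$ must be a successor.

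No step here is deep: the only thing to be careful about is that the ``unboundedly-many'' hypothesis of Theorem \ref{wk-inacc-acc} is really delivered, which is where the interplay between $\mu$-closure, $\mu \tlt \lambda$, and weak inaccessibility of $\lambda$ (as packaged in Remark \ref{tlt-rmk}) is essential; and that in the $\ESCH$ clause the bookkeeping of ``high-enough'' is uniform in $\mu$ once $\mu$ has been chosen from the accessibility spectrum of $\ck$.
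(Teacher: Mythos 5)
Your proof is correct and follows essentially the same route as the paper: use $\mu$-closure of $\lambda$ (equivalently $\mu \tlt \lambda$) together with weak inaccessibility to produce unboundedly many regular $\theta < \lambda$ to which Fact \ref{raising-acc} applies, yielding $(\theta,<\lambda)$-accessibility, and then conclude via Theorem \ref{wk-inacc-acc} and Lemma \ref{basic-constr-lem}. The only quibble is in the $\ESCH$ clause: your explicit threshold $\mu + \theta_0 + 2^{<\mu}$ does not by itself bound $\sup_{\theta' < \theta_0}\theta'^{<\mu}$, but since that supremum is a fixed cardinal and the claim concerns only high-enough presentability ranks, taking $\lambda$ above it repairs this immediately.
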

\begin{proof}
  Since $\lambda$ is $\mu$-closed and limit, there are unboundedly-many regular $\theta \in [\mu, \lambda)$ that are $\mu$-closed. By Fact \ref{raising-acc}, for any such $\theta$, $\ck$ is $(\theta, <\lambda)$-accessible. By Theorem \ref{wk-inacc-acc}, $\ck$ is $(\lambda, <\lambda)$-accessible, hence by Lemma \ref{basic-constr-lem} cannot have an object of presentability rank $\lambda$.
\end{proof}

\section{The existence spectrum of a $\mu$-AEC}

We now refine a few results of \cite{internal-sizes-jpaa} concerning the existence spectrum of $\mu$-AECs, especially \cite[4.13]{internal-sizes-jpaa}.

We aim to study proper $(\lambda, <\lambda)$-systems---in the sense of Definition~\ref{systems-def}---and show that under certain conditions they do \emph{not} exist. This will give conditions under which an object of presentability rank $\lambda$ \emph{does} exist:

\begin{lem}\label{prop-sys-lem}
  Let $\lambda$ be a regular cardinal and let $\ck$ be a $\lambda$-accessible category. If $\ck$ has an object that is not $(<\lambda)$-presentable and $\ck$ has no proper $(\lambda, <\lambda)$-systems, then $\ck$ has an object of presentability rank $\lambda$.
\end{lem}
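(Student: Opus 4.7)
The plan is to start from the object $N$ that is not $(<\lambda)$-presentable and use $\lambda$-accessibility to resolve it as a colimit of a sufficiently nice system, then argue by dichotomy.

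First I would apply $\lambda$-accessibility (which, unpacking the definitions, is $(\lambda,<\lambda^+)$-accessibility) to write $N$ as the colimit of a $(\lambda,\lambda)$-system $\seq{N_i : i \in I}$. Each $N_i$ is $(<\lambda^+)$-presentable, so its presentability rank is either strictly less than $\lambda$ (i.e.\ $N_i$ is $(<\lambda)$-presentable) or exactly $\lambda$. This yields a clean dichotomy on the indices.

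If some index $i \in I$ gives $\prk{N_i} = \lambda$, then $N_i$ itself is the desired object, and we are done. Otherwise, every $N_i$ is $(<\lambda)$-presentable, which means that $\seq{N_i : i \in I}$ is in fact a $(\lambda,<\lambda)$-system. By our standing hypothesis, $\ck$ has no proper $(\lambda,<\lambda)$-systems, so this particular system must fail to be proper. Now apply Fact \ref{sys-basic-facts}(\ref{sys-basic-facts-2})(b): the colimit of a non-proper $(\lambda,<\lambda)$-system is $(<\lambda)$-presentable. Hence $N$ would be $(<\lambda)$-presentable, contradicting the choice of $N$.

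I do not anticipate a serious obstacle here: the argument is essentially a two-line case split, and the only substantive input is Fact \ref{sys-basic-facts}(\ref{sys-basic-facts-2}). The one point that deserves care is making sure that ``$\lambda$-accessible'' is being interpreted as $(\lambda,\lambda)$-accessible (per Definition \ref{acccat-defs}), so that the objects appearing in the resolving system are a priori only $(<\lambda^+)$-presentable rather than $(<\lambda)$-presentable, which is exactly what forces the dichotomy and makes the argument work.
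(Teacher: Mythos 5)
Your proof is correct and is essentially the paper's argument: the paper routes the same dichotomy through Lemma \ref{basic-constr-lem} (no object of rank $\lambda$ plus $\lambda$-accessibility implies $(\lambda,<\lambda)$-accessibility) and then derives the same contradiction via Fact \ref{sys-basic-facts}(\ref{sys-basic-facts-2}), whereas you have simply inlined that lemma's content into the case split on the ranks of the $N_i$. Your closing remark about the interpretation of ``$\lambda$-accessible'' as $(\lambda,\lambda)$-accessible is exactly the right point of care.
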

\begin{proof}
  By Lemma \ref{basic-constr-lem}, it suffices to show that $\ck$ is \emph{not} $(\lambda, <\lambda)$-accessible. Suppose for a contradiction that $\ck$ is $(\lambda, <\lambda)$-accessible. Let $M$ be an object that is not $(<\lambda)$-presentable. Then $M$ is the colimit of a $(\lambda, <\lambda)$-system, which must be proper because $M$ is not $(<\lambda)$-presentable (see Fact \ref{sys-basic-facts}), a contradiction to the assumption that there are no proper $(\lambda, <\lambda)$-systems.
\end{proof}

To help the reader, let us consider what a $(\lambda^+, <\lambda^+)$-system should be in an AEC $\K$ with $\lambda > \LS (\K)$. Since internal sizes correspond to cardinalities in that context (see Fact \ref{rank-bounds} or simply \cite[4.3]{lieberman-categ}), such a system must be a $\lambda^+$-directed system consisting of object of cardinality strictly less than $\lambda$. Because it is ``too directed,'' the system cannot be proper  (i.e.\ its colimit will just be a member of the system). We attempt here to generalize such an argument to suitable $\mu$-AECs. We will succeed when $\lambda$ is $\mu$-closed (Theorem \ref{no-proper-thm} --- notice that this is automatic when $\mu = \aleph_0$).

We will use the following key bound on the internal size of a subobject:

\begin{lem}\label{internal-monot}
  If $\K$ is a $\mu$-AEC, $M \lea N$ are in $\K$, $\lambda > \LS (\K)$ is a $\mu$-closed cardinal, and $N$ is $(<\lambda^+)$-presentable, then $M$ is $(<\lambda^+)$-presentable.
\end{lem}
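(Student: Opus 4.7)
The plan is to reduce this to an application of the two parts of Fact~\ref{rank-bounds}, which together link presentability and cardinality of the underlying set in a $\mu$-AEC (provided one sits above $\LS (\K)$ and works at a $\mu$-closed cardinal). The point is that at such a $\lambda$, the internal size of an object is essentially controlled by the cardinality of its universe, and cardinality is obviously monotone under $\lea$.

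First I would apply Fact~\ref{rank-bounds}(\ref{rank-bounds-2}) to $N$: since $N$ is $(<\lambda^+)$-presentable and $\lambda > \LS (\K)$ is $\mu$-closed, this gives $|U N| < \lambda$. Since $M \lea N$ is a morphism in the $\mu$-AEC $\K$ (in particular $M$ is a $\tau(\K)$-substructure of $N$), we get $U M \subseteq U N$, hence $|U M| \le |U N| < \lambda$.

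Next I would apply Fact~\ref{rank-bounds}(1) to bound $r_\K (M) \le |U M|^+ + \mu$. Since $|U M| < \lambda$ we have $|U M|^+ \le \lambda$, and since $\mu \le \LS (\K) < \lambda$ we have $\mu < \lambda$, so $r_\K (M) \le \lambda < \lambda^+$. Because the presentability rank is always a regular cardinal, $r_\K (M)$ is a regular cardinal strictly less than $\lambda^+$, which is exactly the statement that $M$ is $(<\lambda^+)$-presentable.

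There is no real obstacle here: the content was already extracted in Fact~\ref{rank-bounds}, and the lemma is really the observation that those bounds combine to give monotonicity of internal size under $\lea$ at $\mu$-closed levels above $\LS (\K)$. One small thing to verify is the singular case, but this is absorbed into Fact~\ref{rank-bounds}(\ref{rank-bounds-2}) itself (whose proof passes to a regular $\mu$-closed $\lambda_0 < \lambda$), so no separate argument is needed in the singular subcase of our lemma.
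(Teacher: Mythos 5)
Your proof is correct and is essentially identical to the paper's: both apply Fact~\ref{rank-bounds}(\ref{rank-bounds-2}) to $N$ to get $|U N| < \lambda$, use $U M \subseteq U N$, and then apply Fact~\ref{rank-bounds}(1) to conclude $r_\K(M) \le \lambda$. No gaps.
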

\begin{proof}
  By Fact \ref{rank-bounds}, $|U N| < \lambda$. Of course, $|U M| \le |U N|$, so $|U M| < \lambda$ By Fact \ref{rank-bounds} again, $r_{\K} (M) \le |U M|^+ + \mu$, so $r_{\K} (M) \le \lambda + \mu = \lambda$, so $M$ is $(<\lambda^+)$-presentable, as desired.
\end{proof}

We require an additional refinement, concerning systems in which bounded subsystems have small colimits:

\begin{defin}\label{bounded-pres-def}
  A system $\seq{M_i : i \in I}$ in a given category is \emph{boundedly $(<\lambda)$-presentable} if whenever $I_0 \subseteq I$ is bounded in $I$, the colimit of $\seq{M_i : i \in I_0}$ is $(<\lambda)$-presentable (whenever it exists).
\end{defin}

\begin{lem}\label{bounded-pres}
  Let $\K$ be a $\mu$-AEC and let $\lambda > \LS (\K)^+$ be such that $\lambda^-$ is $\mu$-closed. Then any system in $\K$ consisting of $(<\lambda)$-presentable objects is boundedly $(<\lambda)$-presentable.
\end{lem}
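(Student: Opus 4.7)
The plan is to realize the colimit of a bounded subsystem as a sub-object of one of the bounding objects, bound the cardinality of its underlying set via Fact \ref{rank-bounds}(\ref{rank-bounds-2}), and then bound its presentability rank via Fact \ref{rank-bounds}(1).

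Fix a system $\seq{M_i : i \in I}$ of $(<\lambda)$-presentable objects in $\K$ and a bounded subset $I_0 \subseteq I$, say with bound $i^\ast \in I$ (so $i \le i^\ast$ for all $i \in I_0$), and assume that the colimit $N$ of $\seq{M_i : i \in I_0}$ exists in $\K$. Since the transition morphisms $M_i \to M_{i^\ast}$ for $i \in I_0$ make $M_{i^\ast}$ into a cocone over the restricted system, the universal property of the colimit gives a canonical morphism $N \to M_{i^\ast}$. All morphisms in $\K$ viewed as a category (cf.\ Fact \ref{maecsacc}) are monos, in particular injective on underlying sets, so up to isomorphism we may arrange $N \lea M_{i^\ast}$ and $|U N| \le |U M_{i^\ast}|$.

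The core step is to bound $|U M_{i^\ast}|$ by $\lambda^-$ using Fact \ref{rank-bounds}(\ref{rank-bounds-2}). Write $\kappa := \lambda^-$. By the hypotheses on $\lambda$, $\kappa$ is $\mu$-closed and satisfies $\kappa \ge \LS(\K)^+ > \LS(\K)$. Whether $\lambda$ is a successor (in which case $\kappa^+ = \lambda$) or a limit cardinal (in which case $\kappa^+ = \lambda^+ > \lambda$), a quick unpacking of the definition of $(<\lambda)$-presentable shows that this property implies $(<\kappa^+)$-presentable, so Fact \ref{rank-bounds}(\ref{rank-bounds-2}) applies to $M_{i^\ast}$ with $\kappa$ in place of $\lambda$ and yields $|U M_{i^\ast}| < \kappa = \lambda^-$.

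Combining, $|U N| \le |U M_{i^\ast}| < \lambda^-$, and since $\mu \le \LS(\K) < \lambda^-$, Fact \ref{rank-bounds}(1) gives $r_{\K}(N) \le |U N|^+ + \mu \le \lambda^-$. As $r_{\K}(N)$ is a regular cardinal bounded by $\lambda^- \le \lambda < \lambda + \aleph_1$, we conclude that $N$ is $(<\lambda)$-presentable, as required. I do not expect any substantive obstacle here; the only mild bookkeeping point is the successor-versus-limit case split needed to promote $(<\lambda)$-presentability of $M_{i^\ast}$ to $(<\kappa^+)$-presentability.
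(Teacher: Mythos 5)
Your proof is correct and is essentially the paper's argument: the paper also maps the colimit of the bounded subsystem into the bounding object and then converts between cardinality and presentability rank via Fact \ref{rank-bounds}; it just packages the two applications of that fact into Lemma \ref{internal-monot} and applies it at a regular $\mu$-closed $\lambda_0 \in [\LS(\K),\lambda)$, whereas you inline the same computation at $\kappa = \lambda^-$. One small slip in your last line: the ``$+\aleph_1$'' in the definition of $(<\lambda)$-presentable is cardinal (max) addition, so $\lambda < \lambda + \aleph_1$ is false and ``$r_\K(N) \le \lambda^-$'' does not by itself give $(<\lambda)$-presentability when $\lambda$ is a limit cardinal ($\lambda^- = \lambda$); but your own bounds already save you, since $|UN| < \lambda$ with $\lambda$ limit gives $|UN|^+ + \mu < \lambda$, hence $r_\K(N) < \lambda$ strictly.
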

\begin{proof}
  Let $\seq{M_i : i \in I}$ be a system consisting of $(<\lambda)$-presentable objects. Let $I_0 \subseteq I$ be bounded in $I$, say by $i$, and such that the colimit $M_{I_0}$ of the resulting system exists. We have that $M_{I_0} \lea M_{i}$. Since $\lambda^-$ is $\mu$-closed and $M_i$ is $(<\lambda)$-presentable, we can find $\lambda_0 \in [\LS (\K), \lambda)$ regular and $\mu$-closed such that $M_i$ is $\lambda_0$-presentable. By Lemma \ref{internal-monot}, $M_{I_0}$ is also $\lambda_0$-presentable, hence $(<\lambda)$-presentable, as desired.
\end{proof}

Using the bound of Lemma \ref{internal-monot} again, we now show that for most successor cardinals $\theta$, there are no proper $\theta$-directed boundedly $(<\theta)$-presentable systems:

\begin{lem}\label{bounded-pres-nonex}
  Let $\K$ be a $\mu$-AEC. If $\lambda > \LS (\K)$ is $\mu$-closed, then there are no proper $\lambda^+$-directed boundedly $(<\lambda^+)$-presentable systems.
\end{lem}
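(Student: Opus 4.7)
The plan is to argue by contradiction. Suppose $\seq{M_i : i \in I}$ is a proper $\lambda^+$-directed boundedly $(<\lambda^+)$-presentable system with colimit $M$. First, for each $i \in I$, the singleton $\{i\}$ is bounded in $I$ and has colimit $M_i$, so the bounded presentability hypothesis gives that $M_i$ is $(<\lambda^+)$-presentable. Since $\lambda > \LS(\K)$ is $\mu$-closed, Fact \ref{rank-bounds}(\ref{rank-bounds-2}) yields $|U M_i| < \lambda$ for every $i \in I$. Moreover, since $\lambda^+ > \lambda \ge \LS(\K) \ge \mu$, the system is in particular $\mu$-directed, so by the chain axioms of $\mu$-AECs the colimit $M$ is the $\mu$-directed union: $U M = \bigcup_{i \in I} U M_i$, with each colimit map $M_i \to M$ being inclusion.

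Next I would bound $|U M|$ by a pigeonhole argument using $\lambda^+$-directedness. Suppose $|U M| \ge \lambda$ and fix distinct elements $\{a_\alpha : \alpha < \lambda\} \subseteq U M$, each witnessed by some $i_\alpha \in I$ with $a_\alpha \in U M_{i_\alpha}$. Because $|\{i_\alpha : \alpha < \lambda\}| \le \lambda < \lambda^+$, the $\lambda^+$-directedness of $I$ produces an upper bound $i \in I$ for this set. Then $M_{i_\alpha} \lea M_i$ for all $\alpha$, so $\{a_\alpha : \alpha < \lambda\} \subseteq U M_i$, contradicting $|U M_i| < \lambda$. Thus $|U M| < \lambda$.

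Finally, I would cover $U M$ by a single element of the system. Setting $\kappa := |U M| < \lambda$, enumerate $U M = \{b_\beta : \beta < \kappa\}$ and pick $j_\beta \in I$ with $b_\beta \in U M_{j_\beta}$. Since $|\{j_\beta : \beta < \kappa\}| \le \kappa < \lambda^+$, $\lambda^+$-directedness supplies $i^\ast \in I$ dominating every $j_\beta$, whence $U M \subseteq U M_{i^\ast}$. Together with $U M_{i^\ast} \subseteq U M$ (from $M_{i^\ast} \lea M$), we obtain $U M = U M_{i^\ast}$, and since $\lea$ extends $\tau$-substructure, $M = M_{i^\ast}$ as $\tau$-structures. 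But then the identity on $M$ trivially factors through $M_{i^\ast}$, contradicting properness.

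The only real obstacle is recognizing that one must first bound $|U M|$ before extracting a single cofinal index; without this preliminary step, the set of witnessing indices could a priori be too large to apply $\lambda^+$-directedness. Once this ordering is in place, the proof is just two successive applications of the pigeonhole principle built on Fact \ref{rank-bounds}(\ref{rank-bounds-2}).
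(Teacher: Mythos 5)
Your proof is correct, and it takes a genuinely different --- and more elementary --- route than the paper's. You use the bounded-presentability hypothesis only for singletons, to conclude that each $M_i$ is $(<\lambda^+)$-presentable and hence, by Fact \ref{rank-bounds}(\ref{rank-bounds-2}), that $|U M_i| < \lambda$; two applications of pigeonhole against $\lambda^+$-directedness then give $|U M| < \lambda$ and finally $M = M_{i^\ast}$ for some index, killing properness. The paper instead splits into the cases $\lambda$ regular and $\lambda$ singular, extracts long chains from the system (a proper chain of type $\lambda$ in the regular case; a strictly increasing $\lambda$-sequence inside a cofinal subsystem obtained via Lemma \ref{cofin-dir} in the singular case), and contradicts the presentability bounds of Fact \ref{sys-basic-facts} and Lemma \ref{internal-monot}; that argument exercises the bounded-presentability hypothesis on genuinely nontrivial bounded subsets. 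Note that your argument actually proves the formally stronger statement that there is no proper $\lambda^+$-directed system all of whose objects are $(<\lambda^+)$-presentable, i.e.\ it yields Theorem \ref{no-proper-thm} directly without passing through Lemma \ref{bounded-pres}. What the paper's decomposition buys is an isolation of the ``boundedly presentable'' notion, which is reused later for filtrations (Lemma \ref{smooth-lem}); what yours buys is brevity, at no real cost in generality, since the paper's proof also leans on the cardinality bound of Lemma \ref{internal-monot}. The one step worth making explicit is the standard replacement of the $\lambda^+$-directed diagram by an isomorphic diagram of substructures of $M$ with inclusion maps (harmless, as all morphisms in a $\mu$-AEC are embeddings), so that the colimit is literally the union and your witnessing indices make sense.
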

\begin{proof}
  Assume for a contradiction that $\seq{M_i : i \in I}$ is such a system, with colimit $M$. First, if $\lambda$ is regular, then using $\lambda^+$-directedness and properness we can find a chain $I_0 \subseteq I$ of type $\lambda$ such that $i < j$ in $I_0$ implies $M_i \neq M_j$. Then $\seq{M_i : i \in I_0}$ is proper, so its colimit (union) $M_{I_0}$ is not $\lambda$-presentable (Fact \ref{sys-basic-facts}). However, $I_0$ is bounded as $I$ is $\lambda^+$-directed, a contradiction to the hypothesis of bounded $(<\lambda^+)$-presentability.

  Assume now that $\lambda$ is singular. Let $\delta := \cf{\lambda}$ and write $\lambda = \sup_{\alpha < \delta} \lambda_\alpha$, with $\LS (\K) < \lambda_0$ and each $\lambda_\alpha$ regular and $\mu$-closed. Let $I_\alpha := \{i \in I \mid M_i \text{ is } \lambda_\alpha\text{-presentable}\}$. Note that $I = \bigcup_{\alpha < \delta} I_\alpha$.

  By Lemma \ref{cofin-dir}, there exists $\alpha < \delta$ such that $I_\alpha$ is cofinal in $I$. By renaming, we can assume without loss of generality that $\alpha = 0$: $I_0$ is already cofinal in $I$, hence $I_\alpha$ is cofinal in $I$ for all $\alpha < \delta$. Note that $I_0$ must itself be $\lambda^+$-directed. 

  Now pick $\seq{i_j : j < \lambda}$ an increasing sequence in $I_0$ such that $\seq{M_{i_j} : j < \lambda}$ is strictly increasing (this is possible by properness of the system). For $k < \lambda$ of cofinality at least $\mu$, let $N_k = \bigcup_{j < k} M_{i_j}$. Note that if $\alpha < \delta$ and $\cf{k} \ge \lambda_\alpha$, then by Fact \ref{sys-basic-facts}, $N_k$ is \emph{not} $\lambda_\alpha$-presentable. Fix $\alpha < \delta$ such that $\lambda_0 < \lambda_\alpha$. We then have that $N^1 := N_{\lambda_\alpha}$ is not $\lambda_\alpha$-presentable, but $N^2 := M_{i_{\lambda_{\alpha}}}$ is $\lambda_0$-presentable.  Since $N^1 \lea N^2$, this contradicts Lemma \ref{internal-monot}.
\end{proof}

We have arrived at the main technical result of this section.

\begin{thm}\label{no-proper-thm}
  Let $\K$ be a $\mu$-AEC. If $\lambda > \LS (\K)$ is $\mu$-closed, then there are no proper $(\lambda^+, <\lambda^+)$-systems in $\K$.
\end{thm}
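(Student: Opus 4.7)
The theorem should follow directly by stitching together the two preceding lemmas, so the plan is mainly to verify that their hypotheses line up.

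First I would take an arbitrary $(\lambda^+, <\lambda^+)$-system $\seq{M_i : i \in I}$ in $\K$ and note that, by Definition \ref{systems-def}, this is a $\lambda^+$-directed diagram consisting of $(<\lambda^+)$-presentable objects.

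Next I would apply Lemma \ref{bounded-pres} with its parameter $\lambda$ replaced by $\lambda^+$. The hypotheses to check are (a) $\lambda^+ > \LS(\K)^+$, which is immediate from the assumption $\lambda > \LS(\K)$, and (b) $(\lambda^+)^- = \lambda$ is $\mu$-closed, which is exactly the hypothesis of the theorem. Thus $\seq{M_i : i \in I}$ is boundedly $(<\lambda^+)$-presentable.

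Finally, Lemma \ref{bounded-pres-nonex}, whose hypotheses ($\lambda > \LS(\K)$ and $\lambda$ is $\mu$-closed) coincide with ours, tells us that there are no proper $\lambda^+$-directed boundedly $(<\lambda^+)$-presentable systems. Hence $\seq{M_i : i \in I}$ cannot be proper, which gives the conclusion. There is no real obstacle: the genuine work has been absorbed into Lemma \ref{bounded-pres-nonex}, where the cardinal arithmetic (splitting into the regular versus singular $\lambda$ cases and using Lemma \ref{internal-monot}) was carried out; here we merely need to observe that the parameter shift from $\lambda$ to $\lambda^+$ preserves the $\mu$-closedness assumption since $(\lambda^+)^- = \lambda$.
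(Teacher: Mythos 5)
Your proof is correct and is exactly the paper's argument: the paper's own proof of Theorem \ref{no-proper-thm} reads ``By Lemma \ref{bounded-pres} (with $\lambda^+$ in place of $\lambda$) and Lemma \ref{bounded-pres-nonex}.'' You have simply spelled out the (correct) verification that the hypotheses of those two lemmas are met under the stated parameter shift.
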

\begin{proof}
  By Lemma \ref{bounded-pres} (with $\lambda^+$ in place of $\lambda$) and Lemma \ref{bounded-pres-nonex}.
\end{proof}

We get that, at least for successors of high-enough $\mu$-closed cardinals (or under SCH, see below), the presentability rank spectrum contains the accessibility spectrum.

\begin{cor}\label{spectrum-cor}
  Let $\K$ be a $\mu$-AEC. If $\lambda > \LS (\K)$ is a $\mu$-closed cardinal such that $\K$ is $\lambda^+$-accessible and $\K$ has an object of cardinality at least $\lambda$, then $\K$ has an object of presentability rank $\lambda^+$.
\end{cor}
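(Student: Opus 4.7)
The plan is to apply Lemma \ref{prop-sys-lem} with $\lambda^+$ playing the role of $\lambda$ there. That lemma requires three ingredients: that $\K$ is $\lambda^+$-accessible, that $\K$ contains an object which is not $(<\lambda^+)$-presentable, and that $\K$ has no proper $(\lambda^+, <\lambda^+)$-systems. The first is given by hypothesis, the third is precisely the content of Theorem \ref{no-proper-thm} (which needs exactly the assumption $\lambda > \LS(\K)$ and $\mu$-closed), so the only thing to supply is the second.

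For the second ingredient, I would take the given object $M \in \K$ of cardinality at least $\lambda$. By Fact \ref{rank-bounds}(\ref{rank-bounds-2}), since $\lambda > \LS(\K)$ is $\mu$-closed, any $(<\lambda^+)$-presentable object of $\K$ has underlying set of size strictly less than $\lambda$. Contrapositively, $M$ cannot be $(<\lambda^+)$-presentable, supplying the witness we need.

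Putting the three pieces together, Lemma \ref{prop-sys-lem} produces an object of $\K$ of presentability rank exactly $\lambda^+$, which is the desired conclusion. There is no real obstacle here: once Theorem \ref{no-proper-thm} and Fact \ref{rank-bounds}(\ref{rank-bounds-2}) are in hand, the corollary is an almost formal consequence via Lemma \ref{prop-sys-lem}. All the work has already been invested in proving the non-existence of proper systems and the cardinality bound on presentable objects under the $\mu$-closure hypothesis.
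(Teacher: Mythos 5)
Your proof is correct and follows exactly the paper's own argument: use Fact \ref{rank-bounds}(\ref{rank-bounds-2}) with the $\mu$-closure of $\lambda$ to see that the object of cardinality at least $\lambda$ is not $(<\lambda^+)$-presentable, invoke Theorem \ref{no-proper-thm} to rule out proper $(\lambda^+, <\lambda^+)$-systems, and conclude via Lemma \ref{prop-sys-lem}. There is nothing to add.
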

\begin{proof}
  Let $M \in \K$ have cardinality at least $\lambda$. Using Fact \ref{rank-bounds} together with the assumption that $\lambda$ is $\mu$-closed, $M$ is not $(<\lambda^+)$-presentable. By Theorem \ref{no-proper-thm}, there are no proper $(\lambda^+, <\lambda^+)$-systems in $\K$. By Lemma \ref{prop-sys-lem}, $\K$ has an object of presentability rank $\lambda^+$.
\end{proof}

We have in particular recovered \cite[4.13]{internal-sizes-jpaa}. This will later be further generalized to any accessible category (Theorem \ref{no-mono-thm}).

\begin{cor}\label{recovered}
  Let $\K$ be a $\mu$-AEC and let $\lambda = \lambda^{<\mu} \ge \LS (\K)$. We have that $\K$ has an object of presentability rank $\lambda^+$ if at least one of the following conditions hold:

  \begin{enumerate}
  \item $\lambda > \LS (\K)$, $\lambda$ is $\mu$-closed, and $\K$ has an object of cardinality at least $\lambda$.
  \item $\lambda$ is regular and $\K$ has an object of cardinality at least $\lambda^+$.
  \end{enumerate}
\end{cor}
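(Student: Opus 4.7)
My plan is to dispatch the two conditions separately: for (1) I would reduce to Corollary~\ref{spectrum-cor} after verifying a mild accessibility statement; for (2) I would construct the desired object explicitly and apply Corollary~\ref{succ-reg-cor}.

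For (1), the first step is to show that $\K$ is $\lambda^+$-accessible. The category $\K$ has $\lambda^+$-directed colimits because it has $\mu$-directed colimits and $\lambda^+ \ge \mu$. Fact~\ref{rank-bounds}(\ref{rank-bounds-2}) applied to $\lambda^+$ (which is $\mu$-closed since $\lambda = \lambda^{<\mu}$) shows that every $\lambda^+$-presentable object has cardinality at most $\lambda$, so there is a set of them up to isomorphism. Finally, the family of $\lea$-substructures of a given $M \in \K$ of cardinality at most $\lambda$ is $\lambda^+$-directed: given fewer than $\lambda^+$ such substructures, the union of their universes has cardinality at most $\lambda$, and a single LST application, using $\lambda^{<\mu} = \lambda$, produces a containing $\lea$-substructure still of cardinality at most $\lambda$, with the required $\lea$-comparisons supplied by coherence. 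Each such substructure is $\lambda^+$-presentable by Fact~\ref{rank-bounds}(1), and their union is $M$ by LST. Corollary~\ref{spectrum-cor} then delivers an object of presentability rank $\lambda^+$.

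For (2), I would build explicitly a proper $(\lambda, \lambda^+)$-system of length exactly $\lambda$ and then apply Corollary~\ref{succ-reg-cor} with its $\mu$-parameter set to our regular $\lambda$. Given $N \in \K$ with $|U N| \ge \lambda^+$, I would construct by transfinite recursion a strictly $\lea$-increasing chain $\seq{M_\alpha : \alpha < \lambda}$ of $\lea$-substructures of $N$ each of cardinality at most $\lambda$: at successor stages add an element of $U N \setminus U M_\alpha$ (which exists since $|U N| \ge \lambda^+ > \lambda$) via LST; at limits of cofinality at least $\mu$, take the $\mu$-directed union supplied by the chain axiom; at limits of cofinality strictly below $\mu$, apply LST to $N$ over the union of previous universes and recover $\lea$-comparisons via coherence. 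The cardinality bound at most $\lambda$ is maintained throughout precisely because $\lambda = \lambda^{<\mu}$. The resulting chain is $\lambda$-directed (as $\lambda$ is regular), its members are $\lambda^+$-presentable by Fact~\ref{rank-bounds}(1), and it is proper because monicity of all morphisms prevents any retraction of the colimit through a single member. Corollary~\ref{succ-reg-cor} then gives an object of presentability rank exactly $\lambda^+$.

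The main technical point, shared by both cases, is controlling the cardinality of objects produced by LST at every stage; this is precisely where $\lambda = \lambda^{<\mu}$ is used and is indispensable. Everything else is straightforward bookkeeping with the machinery of earlier sections.
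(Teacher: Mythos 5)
Your proof is correct and follows essentially the same route as the paper: case (1) reduces to Corollary~\ref{spectrum-cor} after establishing $\lambda^+$-accessibility, and case (2) produces a proper $(\lambda,\lambda^+)$-system of length $\lambda$ and invokes Corollary~\ref{succ-reg-cor}. The only difference is that you re-derive by hand, via explicit LST/coherence constructions, what the paper obtains by citing Fact~\ref{raising-acc} (for the accessibility upgrade) and Fact~\ref{succ-reg-existence} (for extracting the proper chain, using that all morphisms are monos).
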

\begin{proof}
  Since $\lambda = \lambda^{<\mu}$, $\lambda^+$ is $\mu$-closed, so by Fact \ref{raising-acc}, $\ck$ is $\lambda^+$-accessible. Now:

  \begin{enumerate}
  \item This follows from Corollary \ref{spectrum-cor}.
  \item Since $\ck$ has $\mu$-directed colimits, it is also $(\lambda, \lambda^+)$-accessible. Since $\ck$ has an object of cardinality at least $\lambda^+$ and $\lambda^+$ is $\mu$-closed, Fact \ref{rank-bounds} (with $\lambda^+$ in place of $\lambda$) implies that this object is not $\lambda^+$-presentable. Now apply Fact \ref{succ-reg-existence}.
  \end{enumerate}
\end{proof}

\begin{remark}
  The example of well-orderings ordered by initial segment given in \cite[6.2]{internal-sizes-jpaa} shows that, even when $\mu = \aleph_0$, we may not have an object of rank $\LS (\K)^+$ if $\LS (\K)$ is singular.
\end{remark}

Under SCH, the statements simplify and we recover \cite[4.15]{internal-sizes-jpaa}:

\begin{cor}\label{sch-simplified-1}
Let $\K$ be a large $\mu$-AEC. If $\SCH$ holds above $\LS (\K)$, then for every $\lambda > \LS (\K)$ of cofinality at least $\mu$, $\K$ has an object of presentability rank $\lambda^+$. In particular, $\K$ is weakly LS-accessible.
\end{cor}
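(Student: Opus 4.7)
The plan is to apply Corollary \ref{recovered} to each $\lambda > \LS (\K)$ of cofinality at least $\mu$, splitting into two cases depending on whether $\lambda$ is $\mu$-closed. First I would observe that since $\LS (\K)^{<\mu} = \LS (\K)$, we have $\sup_{\theta_0 < \LS (\K)} \theta_0^{<\mu} \le \LS (\K) < \lambda$. Under the hypothesis that $\SCH$ holds above $\LS (\K)$, Fact \ref{sch-basic}(2) then tells us that $\lambda$ is $\mu$-closed if and only if $\lambda$ is not the successor of a cardinal of cofinality strictly less than $\mu$.

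In the $\mu$-closed case, combining $\mu$-closedness with $\cf{\lambda} \ge \mu$ gives $\lambda = \lambda^{<\mu}$ by Fact \ref{sch-basic}(1). Since $\K$ is large and $\lambda > \LS (\K)$, there is an object of cardinality at least $\lambda$, and clause (1) of Corollary \ref{recovered} immediately produces an object of presentability rank $\lambda^+$.

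In the non-$\mu$-closed case, the observation above forces $\lambda = \theta^+$ with $\cf \theta < \mu$, so in particular $\lambda$ is regular. I would then verify that $\lambda = \lambda^{<\mu}$ holds anyway: under $\SCH$ we have $\theta^{<\mu} = \theta^{\cf \theta} = \theta^+ = \lambda$, and by Hausdorff's formula $\lambda^{\mu_0} = \lambda \cdot \theta^{\mu_0} = \lambda$ for each $\mu_0 < \mu$, whence $\lambda^{<\mu} = \lambda$. Largeness of $\K$ then supplies an object of cardinality at least $\lambda^+$, and clause (2) of Corollary \ref{recovered} delivers the desired object of presentability rank $\lambda^+$.

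Finally, for the ``in particular'' part, every sufficiently large regular cardinal $\lambda$ satisfies both $\lambda > \LS (\K)$ and $\cf{\lambda} = \lambda \ge \mu$, so the main statement applied to $\lambda$ yields an object of presentability rank $\lambda^+$; this is exactly weak LS-accessibility. The one place where the argument needs real care is the non-$\mu$-closed case, since clause (1) of Corollary \ref{recovered} does not apply there; the key insight is that regularity of $\lambda$ together with $\SCH$ forces $\lambda = \lambda^{<\mu}$ even though $\lambda$ fails to be $\mu$-closed, allowing us to pivot to clause (2).
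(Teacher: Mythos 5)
Your proof is correct and follows essentially the same route as the paper: both reduce to the two clauses of Corollary \ref{recovered} via Fact \ref{sch-basic}, using largeness to supply the required big object. The only difference is organizational --- the paper splits on $\lambda$ regular versus singular (using clause (2) for all regular $\lambda$ and clause (1) for singular $\lambda$, which are automatically $\mu$-closed), whereas you split on $\mu$-closed versus not, and you spell out the cardinal arithmetic showing $\lambda = \lambda^{<\mu}$ when $\lambda = \theta^+$ with $\cf{\theta} < \mu$, which the paper leaves implicit.
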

\begin{proof}
  By Fact \ref{sch-basic}, $\lambda = \lambda^{<\mu}$. If $\lambda$ is regular, we may apply Corollary \ref{recovered}(2). If $\lambda$ is singular, then by Fact \ref{sch-basic} it is $\mu$-closed so one may apply Corollary \ref{recovered}(1).
\end{proof}

Still under SCH, we obtain that the (successor) accessibility spectrum is eventually contained in the existence spectrum: 

\begin{cor}\label{acc-spectrum-cor}
  Assume $\ESCH$. Let $\ck$ be a large category with all morphisms monos. For all high-enough successor cardinals $\theta$, if $\ck$ is $\theta$-accessible, then $\ck$ has an object of presentability rank $\theta$.
\end{cor}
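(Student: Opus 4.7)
The plan is to reduce to a $\mu$-AEC via Fact~\ref{maecsacc} and then apply the results established earlier in this section, splitting into cases according to whether the predecessor of $\theta$ is regular or singular. Since $\ck$ is accessible with all morphisms monos, fix a regular cardinal $\mu$ so that $\ck$ is (equivalent to) a $\mu$-AEC $\K$. Choose a threshold $\chi > \LS (\K)$ above which $\SCH$ holds, set $C := \sup_{\theta_1 < \chi} \theta_1^{<\mu}$, and consider any successor cardinal $\theta = \lambda^+$ with $\ck$ being $\theta$-accessible and $\lambda > \chi + C$. Largeness of $\K$ (transferred from $\ck$) produces an object of cardinality at least $\lambda^+$, which is also at least $\lambda$.

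If $\lambda$ is singular, then $\lambda$ is not the successor of any cardinal, so Fact~\ref{sch-basic}(2) gives that $\lambda$ is $\mu$-closed. Corollary~\ref{spectrum-cor} applied to $\lambda$ then produces an object of presentability rank $\lambda^+ = \theta$, as desired.

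If $\lambda$ is regular, I will invoke Corollary~\ref{recovered}(2), which requires $\lambda = \lambda^{<\mu}$. Since $\mu \le \lambda = \cf{\lambda}$, every function $\alpha \to \lambda$ with $\alpha < \mu$ has bounded range, so $\lambda^{|\alpha|} = \lambda \cdot \sup_{\theta_1 < \lambda} \theta_1^{|\alpha|}$. By Fact~\ref{sch-basic}(2), $\theta_1^{<\mu} \le \theta_1^+ + C$ for every $\theta_1$, and since $\lambda > C$, the successors below $\lambda$ (whether $\lambda$ is a limit or a successor) are bounded by $\lambda$; hence $\sup_{\theta_1 < \lambda} \theta_1^{<\mu} \le \lambda$. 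This yields $\lambda^{|\alpha|} = \lambda$ for each $\alpha < \mu$, and therefore $\lambda^{<\mu} = \lambda$. Corollary~\ref{recovered}(2), combined with the object of cardinality at least $\lambda^+$, now delivers the desired object of presentability rank $\theta$.

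The only real obstacle is verifying the cardinal-arithmetic hypotheses of Corollaries~\ref{spectrum-cor} and~\ref{recovered}(2) uniformly for all sufficiently large successor $\theta$; that this is possible is precisely what $\ESCH$ is designed to give. Everything else is a direct invocation of the machinery already built up in this section together with the reduction of Fact~\ref{maecsacc}.
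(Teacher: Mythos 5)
Your proof is correct and follows essentially the same route as the paper: reduce to a $\mu$-AEC via Fact~\ref{maecsacc}, use $\SCH$ to verify the relevant cardinal-arithmetic hypotheses, and conclude via Corollary~\ref{spectrum-cor} (in the $\mu$-closed case) and Corollary~\ref{recovered}(2) (in the regular case). The paper organizes the case split as $\cf{\lambda}\ge\mu$ versus $\cf{\lambda}<\mu$ and quotes Corollary~\ref{sch-simplified-1} for the first case, but that corollary itself just routes through the same two results you invoke, so the arguments coincide.
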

\begin{proof}
  By Fact \ref{maecsacc}, $\ck$ is equivalent to a $\mu$-AEC $\K$. By replacing $\K$ by a tail segment if necessary, we can assume without loss of generality that $\SCH$ holds above $\LS (\K)$. Pick $\theta > \LS (\K)^+$ successor such that $\K$ is $\theta$-accessible. Write $\theta = \lambda^+$. If $\cf{\lambda} \ge \mu$, Corollary \ref{sch-simplified-1} gives the result, so  we may assume that $\cf{\lambda} < \mu$. In this case, the SCH assumption implies that $\lambda$ is $\mu$-closed so we can apply Corollary \ref{spectrum-cor}.
\end{proof}

Recall (Definition \ref{ls-acc-def}) that a category is \emph{LS-accessible} if it has objects of all high-enough internal sizes. 

\begin{cor}\label{corwellls}
  Assuming $\ESCH$, any large well accessible category with all morphisms monos is LS-accessible.
\end{cor}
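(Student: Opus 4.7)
The plan is to derive this directly by chaining together Corollary \ref{acc-spectrum-cor} with the hypothesis of well accessibility. Since Corollary \ref{acc-spectrum-cor} already hands us the key implication ``$\theta$-accessible $\Rightarrow$ object of presentability rank $\theta$'' for all sufficiently high successor $\theta$ (under $\ESCH$ and the monos hypothesis), all that remains is to verify that the well accessibility assumption supplies $\theta$-accessibility for every high-enough successor $\theta$.

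First I would unpack the definitions. By Definition \ref{acccat-defs}(\ref{well-acc-def}), there is a regular cardinal $\mu_0$ such that $\ck$ is $\theta$-accessible for every regular cardinal $\theta \ge \mu_0$. Any successor cardinal $\theta > \mu_0$ is in particular regular, hence $\ck$ is $\theta$-accessible for every such $\theta$.

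Next I would invoke Corollary \ref{acc-spectrum-cor}: since $\ck$ is large with all morphisms monos and $\ESCH$ holds, there is a cardinal $\theta_0$ such that whenever $\theta \ge \theta_0$ is a successor cardinal and $\ck$ is $\theta$-accessible, then $\ck$ has an object of presentability rank $\theta$. Combining with the previous paragraph (taking $\theta \ge \max(\mu_0^+, \theta_0)$), we obtain that $\ck$ has an object of presentability rank $\theta$ for every sufficiently large successor cardinal $\theta$. By Definition \ref{ls-acc-def}, this is precisely LS-accessibility.

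There is no genuine obstacle here; the statement is really a repackaging of Corollary \ref{acc-spectrum-cor} under the stronger hypothesis of well accessibility, which automatically guarantees the ``$\theta$-accessible'' side condition for all successor $\theta$ on a tail. (One could even note in passing that the ``high-enough'' clause cannot be dropped: by Corollary \ref{well-acc-succ}, well accessibility already ensures that only successor presentability ranks appear above $\mu_0$, so asking for objects of every high-enough internal size is the strongest statement one could hope for.)
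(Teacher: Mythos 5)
Your proof is correct and is exactly the argument the paper intends (the paper states Corollary \ref{corwellls} without proof, as an immediate consequence of Corollary \ref{acc-spectrum-cor} plus the definition of well accessibility). Unpacking well accessibility to get $\theta$-accessibility for all high-enough successor $\theta$ and then feeding this into Corollary \ref{acc-spectrum-cor} is precisely the intended chain of reasoning.
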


Note that Corollary \ref{corwellls} can be seen as a joint generalization of \cite[2.7]{ct-accessible-jsl} (LS-accessibility of large accessible categories with directed colimits and all morphisms monos) and \cite[5.9]{internal-sizes-jpaa} (LS-accessibility of large $\mu$-AECs with intersections): in both cases, the categories in question are well accessible with all morphisms monos (see Fact \ref{raising-acc}, \cite[5.4]{internal-sizes-jpaa}). Note however that the proof of LS-accessibility of large accessible categories with directed colimits and all morphisms monos does not assume SCH.

\section{The existence spectrum of an accessible category}\label{acc-sec}

A downside of the previous section was the assumption that all morphisms were monos. In the present section, we look at what can be said for arbitrary accessible categories. The main tool is the fact that the inclusion functor $\ck_{mono} \to \ck$ is (in a sense we make precise) accessible, hence plays reasonably well with internal sizes. While the notion of an accessible functor appears already in \cite[\S2.4]{makkai-pare}, we give here a more parameterized definition, in the style of Definition \ref{acccat-defs}: 

\begin{defin}
  Let $\lambda$ and $\mu$ be infinite cardinals, with $\mu$ regular. A functor $F: \ck \to \cL$ is \emph{$(\mu, <\lambda)$-accessible} if it preserves $\mu$-directed colimits and both $\ck$ and $\cL$ are $(\mu, <\lambda)$-accessible. We say that $F$ is \emph{$(\mu, \lambda)$-accessible} if $\lambda$ is regular and $F$ is $(\mu, <\lambda^+)$-accessible. We say that $F$ is \emph{$\mu$-accessible} precisely when it is $(\mu, \mu)$-accessible.
\end{defin}

\begin{fact}[{\cite[6.2]{indep-categ-advances}}]\label{mono-example}
  If $\ck$ is a $\mu$-accessible category, there there exists a cardinal $\lambda \ge \mu$ such that $\ck_{mono}$ is $(\mu, \lambda)$-accessible and moreover the inclusion functor $F$ of $\ck_{mono}$ into $\ck$ is $(\mu, \lambda)$-accessible.
\end{fact}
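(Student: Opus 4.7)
The plan is to verify, in turn, each of the three defining properties of $(\mu,\lambda)$-accessibility for $\ck_{mono}$, choosing $\lambda \ge \mu$ as needed; accessibility of the inclusion $F: \ck_{mono} \to \ck$ will follow automatically because $\mu$-directed colimits in $\ck_{mono}$ coincide with those in $\ck$.

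First, I would show that $\ck_{mono}$ has $\mu$-directed colimits, computed as in $\ck$. Given a $\mu$-directed diagram $\seq{M_i : i \in I}$ in $\ck_{mono}$ with $\ck$-colimit $M$ and coprojections $c_i : M_i \to M$, the goal is to see that each $c_i$ is a monomorphism. Suppose $f, g : X \to M_i$ satisfy $c_i f = c_i g$; write $X$ as a $\mu$-directed colimit of $\mu$-presentable objects in $\ck$ using $\mu$-accessibility. Each such $\mu$-presentable factor $Y$ yields composites $Y \to M_i \to M$ (with $f$ or with $g$) which must factor through some $c_j$ with $j \ge i$, and by $\mu$-directedness of $I$ one can choose a common such $j$; the hypothesis that the transition map $M_i \to M_j$ is itself a monomorphism then forces the two restrictions to agree on each $Y$, hence $f=g$.

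Second, $\ck_{mono}$ must contain only a set (up to isomorphism) of $(<\lambda^+)$-presentable objects. This is essentially free: because $\ck_{mono}$ has fewer morphisms than $\ck$, any $\theta$-presentable object of $\ck$ remains $\theta$-presentable in $\ck_{mono}$, so the set of $\mu$-presentables of $\ck$ already supplies a set of $(<\mu^+)$-presentables for $\ck_{mono}$.

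The substantive step, and the main obstacle, is realizing every $M \in \ck_{mono}$ as a $\mu$-directed colimit, via monomorphisms, of $(<\lambda^+)$-presentable objects. Starting from the canonical resolution of $M$ as the colimit of a $\mu$-directed diagram $\seq{N_i : i \in I}$ of $\mu$-presentables of $\ck$, the colimit maps $N_i \to M$ generally fail to be monos. I would replace each such map by a (strong epimorphism, monomorphism) factorization $N_i \twoheadrightarrow M_i \hookrightarrow M$ in $\ck$, obtaining a $\mu$-directed family of subobjects of $M$ whose colimit is still $M$ and whose transition maps (induced by the universal property of each factorization) are monos. To keep each $M_i$ of bounded internal size I would choose $\lambda$ large enough that every $\mu$-presentable object of $\ck$ admits at most $\lambda$-many strong quotients up to isomorphism, a bound provided by a standard counting argument in accessible categories. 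The delicate point is ensuring the existence of the required factorizations in a general $\mu$-accessible category (not merely a locally presentable one), together with the uniform bound on strong-quotient classes; both rest on the general theory of factorizations in accessible categories as developed in Makkai--Par\'e. With such a $\lambda$ fixed, the construction shows that $\ck_{mono}$ is $(\mu,\lambda)$-accessible and that $F$ is $(\mu,\lambda)$-accessible.
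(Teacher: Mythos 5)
This statement is imported as a Fact from \cite[6.2]{indep-categ-advances}; the paper under review gives no proof of it, so there is nothing internal to compare against and your argument must be judged on its own terms. Your first two steps are essentially sound. The argument that a $\mu$-directed colimit (computed in $\ck$) of a diagram of monomorphisms has monic coprojections is the standard one; you should also verify the universal property in $\ck_{mono}$, i.e.\ that the map induced by a cocone of monos is itself a mono, but the same technique handles this. For presentability, the slogan ``fewer morphisms'' is not by itself a justification --- having fewer test diagrams could in principle make \emph{more} objects presentable, which is exactly why the essential smallness of the class of $(<\lambda^+)$-presentables of $\ck_{mono}$ still requires an argument --- but the positive claim you need (a $\theta$-presentable object of $\ck$ remains $\theta$-presentable in $\ck_{mono}$) is correct, for the reason that a factorization of a mono through a colimit coprojection is automatically by a mono (a first factor of a monomorphism is a monomorphism).

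The genuine gap is in your third step. A general $\mu$-accessible category need not admit (strong epi, mono) factorizations: the usual constructions require completeness, cocompleteness, or well-poweredness hypotheses, as in locally presentable categories (\cite[1.61]{adamek-rosicky}), whereas accessible categories may lack even finite (co)limits; Makkai--Par\'e does not provide such factorizations in this generality, so the system $\seq{M_i}$ of images cannot be formed. Moreover, even where images exist, the uniform bound you want fails on two counts: a strong quotient of a $\mu$-presentable object is in general only ``$\mu$-generated'' and can have strictly larger presentability rank (already in groups, a quotient of a finitely presentable group need not be finitely presentable), and the ``standard counting argument'' bounding the class of strong quotients up to isomorphism is co-well-poweredness, again a theorem for locally presentable categories but not for arbitrary accessible ones. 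A correct proof must therefore avoid image factorizations altogether --- for instance by exploiting the accessibility of the full subcategory of monomorphisms of the arrow category $\ck^{\to}$ together with the uniformization theorem, or by a closing-off argument that produces $(<\lambda^+)$-presentable subobjects directly --- which is why the cardinal $\lambda$ in the statement arises from an application of such general accessibility machinery rather than from counting quotients.
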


The following properties, describing the interaction of an accessible functor with presentability, were first systematically investigated in \cite[\S3]{beke-rosicky}:

\begin{defin}
  A functor $F: \ck \to \cL$ \emph{preserves $\lambda$-presentable objects} if whenever $M$ is $\lambda$-presentable in $\ck$, then $F (M)$ is $\lambda$-presentable (in $\cL$). We say that $F$ \emph{reflects $\lambda$-presentable objects} if $M$ is $\lambda$-presentable in $\ck$ whenever $F (M)$ is $\lambda$-presentable $\cL$. We also say that $F$ \emph{preserves $\lambda$-ranked objects} if whenever $r_{\ck} (M) = \lambda$, then $r_{\cL} (F M) = \lambda$. Similarly define what it means for $F$ to \emph{reflect $\lambda$-ranked objects}.
\end{defin}

Of course, there is a simple test to determine when a functor preserves rank given information as to whether it preserves and reflects presentable objects:

\begin{lem}\label{rank-pres-1}
  Let $F: \ck \to \cL$ be an accessible functor and let $\mu$ be a regular cardinal. If $F$ preserves $\mu$-presentable objects and reflects $(<\mu)$-presentable objects, then $F$ preserves $\mu$-ranked objects. Similarly, if $F$ preserves $(<\mu)$-presentable objects and reflects $\mu$-presentable objects, then $F$ reflects $\mu$-ranked objects.
\end{lem}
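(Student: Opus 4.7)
The plan is to derive both conclusions directly from the definitions of presentability rank and the preservation/reflection hypotheses; no categorical machinery beyond unpacking the definitions should be required. The result is really just a bookkeeping lemma about ranks versus the two-sided ``$\mu$-presentable'' and ``$(<\mu)$-presentable'' data.

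For the first statement, assume $F$ preserves $\mu$-presentable objects and reflects $(<\mu)$-presentable objects, and let $M \in \ck$ with $r_\ck(M) = \mu$. By definition, $M$ is $\mu$-presentable in $\ck$ but not $\theta$-presentable for any regular $\theta < \mu$, i.e.\ $M$ is not $(<\mu)$-presentable. Applying preservation, $FM$ is $\mu$-presentable in $\cL$, so $r_\cL(FM) \le \mu$. For the reverse inequality, suppose for contradiction that $r_\cL(FM) < \mu$; then $FM$ is $\theta$-presentable for some regular $\theta < \mu$, in particular $(<\mu)$-presentable. Reflection then yields that $M$ is $(<\mu)$-presentable, contradicting $r_\ck(M) = \mu$. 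Hence $r_\cL(FM) = \mu$.

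The second statement is entirely symmetric. Assume $F$ preserves $(<\mu)$-presentable objects and reflects $\mu$-presentable objects, and let $M \in \ck$ with $r_\cL(FM) = \mu$. Since $FM$ is $\mu$-presentable in $\cL$, reflection gives that $M$ is $\mu$-presentable in $\ck$, so $r_\ck(M) \le \mu$. If $r_\ck(M) < \mu$, then $M$ is $(<\mu)$-presentable in $\ck$, and preservation would force $FM$ to be $(<\mu)$-presentable in $\cL$, contradicting $r_\cL(FM) = \mu$. Therefore $r_\ck(M) = \mu$.

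There is no real obstacle here; the ``accessible functor'' hypothesis is not even used in the argument as written (it serves only to guarantee that the ranks in question are well-defined ordinals in an accessible setting, and to match the standing conventions of the paper). The only subtlety worth flagging is to remember that ``preserves $(<\mu)$-presentable'' is to be read uniformly over all regular $\theta < \mu + \aleph_1$ simultaneously, so that one genuinely gets the contrapositive one needs in each direction.
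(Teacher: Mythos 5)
Your argument is correct and is essentially identical to the paper's proof: show $r(FM)\le\mu$ from preservation of $\mu$-presentables and rule out $r(FM)<\mu$ via reflection of $(<\mu)$-presentables, with the second claim handled symmetrically (the paper merely says ``the proof of the second is similar'' where you write it out). Nothing further is needed.
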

\begin{proof}
  We prove the first statement (the proof of the second is similar). Let $M$ be an object of $\ck$ such that $r (M)  = \mu$. Then $r (F M) \le \mu$ because $F$ preserves $\mu$-presentable objects, and if $r (F M) < \mu$, then $r (M) < \mu$ because $F$ reflects $(<\mu)$-presentable objects, contradiction. Thus $r (M) = r (F M)$. 
\end{proof}

For a functor to \emph{reflect} $\lambda$-presentable objects, it enough that it is sufficiently accessible and that the functor \emph{reflects split epimorphisms} (i.e.\ if $F f$ is a split epi, then $f$ is a split epi). This was isolated in \cite[3.6]{beke-rosicky}. We now proceed to mine the proof of this result to extract what can be said in our more parameterized setup: 

\begin{lem}\label{proper-split}
  Let $\lambda$ and $\mu$ be cardinals, $\mu$ regular. Let $F: \ck \to \cL$ be a functor reflecting split epimorphisms and preserving $\mu$-directed colimits. Let $\seq{M_i : i \in I}$ be a $(\mu, <\lambda)$-system with colimit $M$. If $\seq{M_i : i \in I}$ is proper, then $\seq{F M_i : i \in I}$ is proper. If in addition $F$ preserves $(<\lambda)$-presentable objects, then $\seq{F M_i : i \in I}$ is a $(\mu, <\lambda)$-system.
\end{lem}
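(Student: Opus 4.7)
The plan is to prove the first assertion by contrapositive and to verify the second by a direct check. First I would unpack the hypotheses: since $F$ preserves $\mu$-directed colimits and $\seq{M_i : i \in I}$ is $\mu$-directed with colimit $M$ and cocone maps $d_i : M_i \to M$, the diagram $\seq{F M_i : i \in I}$ is $\mu$-directed with colimit $F M$ and cocone maps $F(d_i) : F M_i \to F M$.

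For the first assertion, I would rely on the reading of ``proper'' actually used in Fact~\ref{sys-basic-facts}(\ref{sys-basic-facts-2}): a $\mu$-directed system with colimit $N$ is not proper iff some cocone map into $N$ is a split epimorphism (equivalently, the identity on $N$ factors through some cocone map). With that reformulation, if $\seq{F M_i : i \in I}$ is not proper then some $F(d_i)$ is a split epi; since $F$ reflects split epis by hypothesis, $d_i$ is a split epi, so $\seq{M_i : i \in I}$ is not proper. Contrapositive gives the claim. For the second assertion, the indexing poset $I$ is unchanged—so still $\mu$-directed—and the added hypothesis that $F$ preserves $(<\lambda)$-presentable objects ensures each $F M_i$ is $(<\lambda)$-presentable in $\cL$; hence, by Definition~\ref{systems-def}, $\seq{F M_i : i \in I}$ is a $(\mu, <\lambda)$-system.

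The main (and rather minor) obstacle is the precise reading of ``factors through an object in the system.'' If one reads it permissively—``there exist $h : M \to M_i$ and some $g : M_i \to M$ with $g h = \mathrm{id}_M$,'' not insisting that $g$ be a cocone map—then non-properness of $\seq{F M_i : i \in I}$ a priori only gives that $F M$ is a retract of some $F M_i$; to reduce this to a statement about the cocone map $F(d_i)$ one would want $F M_i$ to be $\mu$-presentable so that the retraction factors through a cocone map, and this is not assumed in the first assertion. I would therefore adopt the cocone-map reading used implicitly in Fact~\ref{sys-basic-facts}, under which the argument above is essentially one line.
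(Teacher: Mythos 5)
Your proof is correct and is essentially the paper's own argument: the paper likewise notes that $F$ preserves the colimit, supposes $\id_{FM} = (Ff_i)\circ g$ for a cocone map $f_i$, uses reflection of split epimorphisms to conclude $f_i$ is split epi, and observes the second assertion is immediate from the definition. Your remark about the reading of ``factors through'' is resolved exactly as you guessed: the paper explicitly takes the factorization to be through the cocone map.
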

\begin{proof}
  Since $F$ preserves $\mu$-directed colimits, the colimit of $\seq{F M_i : i \in I}$ is $F M$. Suppose that the identity map on $F M$ factors through some $F M_i$, via a map $g: F M \to F M_i$. That is, $(F f_i) g = \id_{F M}$, where $f_i : M_i \to M$ is a colimit map. Then $F f_i$ is a split epimorphism, hence $f_i$ is a split epimorphism, i.e.\ $f_i g = \id_M$, so $\seq{M_i :i \in I}$ is not proper. The last sentence is immediate from the definition.
\end{proof}

\begin{fact}[{\cite[3.6]{beke-rosicky}}]\label{functor-refl}
  Let $\lambda$ be an uncountable cardinal, and let $F: \ck \to \cL$ be a functor reflecting split epimorphisms. If there exists a regular cardinal $\lambda_0 < \lambda$ such that $F$ is $(\lambda_0, <\lambda)$-accessible and for all regular cardinals $\mu \in [\lambda_0, \lambda)$, $\ck$ is $(\mu, <\lambda)$-accessible, then $F$ reflects $(<\lambda)$-presentable objects.
\end{fact}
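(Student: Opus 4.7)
The plan is to deduce the result directly from Lemma \ref{proper-split} and Fact \ref{sys-basic-facts}(\ref{sys-basic-facts-2}), by carefully choosing the index of directedness of a suitable resolution.

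Fix an object $M \in \ck$ such that $FM$ is $(<\lambda)$-presentable, so that $FM$ is $\theta$-presentable for some regular $\theta < \lambda$. Set $\mu := \max(\theta, \lambda_0)$. Then $\mu$ is regular and $\mu \in [\lambda_0, \lambda)$, so by hypothesis $\ck$ is $(\mu, <\lambda)$-accessible. The key choice is this $\mu$: it is simultaneously large enough that $F$ preserves $\mu$-directed colimits (since $\mu \geq \lambda_0$), that $FM$ is $\mu$-presentable (since $\mu \geq \theta$), and small enough that $M$ can be resolved as a $(\mu, <\lambda)$-system.

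Using $(\mu, <\lambda)$-accessibility of $\ck$, write $M$ as the colimit of a $(\mu, <\lambda)$-system $\seq{M_i : i \in I}$. Because $F$ preserves $\mu$-directed colimits, $\seq{FM_i : i \in I}$ is $\mu$-directed with colimit $FM$. Since $FM$ is $\theta$-presentable and $\mu \geq \theta$, the identity $\id_{FM}$ factors through some colimit leg $FM_i \to FM$; in other words, $\seq{FM_i : i \in I}$ is not proper. Lemma \ref{proper-split} (applied contrapositively) then yields that $\seq{M_i : i \in I}$ itself is not proper. Finally, Fact \ref{sys-basic-facts}(\ref{sys-basic-facts-2})(b) concludes that $M$ is $(<\lambda)$-presentable.

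There is no real obstacle: the proof is essentially a bookkeeping argument once the parameter $\mu$ is chosen correctly. The one subtlety worth flagging is that one must \emph{not} try to invoke Lemma \ref{proper-split}'s second assertion (that the image system is itself a $(\mu, <\lambda)$-system), because the hypotheses do not tell us that $F$ preserves $(<\lambda)$-presentable objects. Fortunately, that assertion is not needed: what matters is only that the image system is $\mu$-directed with colimit $FM$, which is automatic from $F$ preserving $\mu$-directed colimits, and this is enough to apply the defining factorization property of a $\theta$-presentable object.
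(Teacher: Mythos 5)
Your proof is correct and follows essentially the same route as the paper: choose a regular $\mu \in [\lambda_0,\lambda)$ with $FM$ $\mu$-presentable, resolve $M$ as a $(\mu,<\lambda)$-system, and transfer non-properness back along $F$ via Lemma \ref{proper-split} and Fact \ref{sys-basic-facts}(\ref{sys-basic-facts-2}); the only difference is that you argue directly where the paper argues by contradiction. Your remark that the second assertion of Lemma \ref{proper-split} is not needed is accurate.
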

\begin{proof}
  Assume that $F M$ is $(<\lambda)$-presentable. Pick a regular cardinal $\mu \in [\lambda_0, \lambda)$ such that $F M$ is $\mu$-presentable. By $(\mu, <\lambda)$-accessibility, $M$ is the colimit of a $(\mu, <\lambda)$-system $\seq{M_i : i \in I}$. Assume for a contradiction that $M$ is \emph{not} $(<\lambda)$-presentable. Then $\seq{M_i : i \in I}$ must be proper by Fact \ref{sys-basic-facts}(\ref{sys-basic-facts-2}). By Lemma \ref{proper-split}, $\seq{F M_i : i \in I}$ is proper, hence its colimit $F M$ cannot be $\mu$-presentable by Fact \ref{sys-basic-facts}(\ref{sys-basic-facts-2}), a contradiction.
\end{proof}

In passing, we can deduce the following powerful criterion for existence of an object of regular internal size. Notice that this is a generalization of Fact \ref{succ-reg-existence} (which is the special case of the identity functor).

\begin{thm}\label{succ-reg-functor}
    Let $\lambda$ be a regular cardinal and let $F: \ck \to \cL$ be a $(\lambda, \lambda^+)$-accessible functor that preserves $\lambda^+$-presentable objects and reflects isomorphisms. If all morphisms in the image of $F$ are monos and $\ck$ has an object that is not $\lambda^+$-presentable, then $\cL$ has an object of presentability rank $\lambda^+$.
\end{thm}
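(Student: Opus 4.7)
The plan is to mimic Fact~\ref{succ-reg-existence} in the functorial setting: produce a proper $(\lambda, \lambda^+)$-system in $\cL$ with at most $\lambda$ objects so that Corollary~\ref{succ-reg-cor} delivers the desired object of presentability rank $\lambda^+$. The first observation I would make is that $F$ reflects split epimorphisms: if $Ff$ is split epi, then since $Ff$ lies in the image of $F$ it is also a mono, hence iso (mono $+$ split epi $=$ iso); since $F$ reflects isomorphisms, $f$ itself is iso, in particular a split epi.

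Using $(\lambda, \lambda^+)$-accessibility of $\ck$, which is part of $F$ being $(\lambda, \lambda^+)$-accessible, I would write the given non-$\lambda^+$-presentable object as the colimit of a $(\lambda, \lambda^+)$-system $\seq{N_i : i \in I}$ in $\ck$. This system is proper by Fact~\ref{sys-basic-facts}(\ref{sys-basic-facts-2}). Since $F$ preserves $\lambda$-directed colimits and $\lambda^+$-presentable objects, Lemma~\ref{proper-split} tells us the image $\seq{FN_i : i \in I}$ is a proper $(\lambda, \lambda^+)$-system in $\cL$ with colimit $FN$, and every transition map and colimit map of this image system lies in the image of $F$ (hence is a mono).

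Next, I would extract a chain $\seq{i_k : k < \lambda}$ in $I$ by induction, using $\lambda$-directedness of $I$ at limit stages and, at successor stages, picking $i_{k+1} \ge i_k$ so that $Ft_{i_k, i_{k+1}}$ is not an iso. The successor step is always possible: if for some $i_k$ every transition $FN_{i_k} \to FN_j$ with $j \ge i_k$ were an iso, then a standard cofinality/cocone argument (using that $\{j \ge i_k\}$ is cofinal in $I$ and that the inverses assemble into a cocone over $FN_{i_k}$) would exhibit the colimit map $FN_{i_k} \to FN$ as an iso, contradicting properness of the image system.

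The main obstacle is to verify that the resulting subsystem $\seq{FN_{i_k} : k < \lambda}$ (a $(\lambda, \lambda^+)$-system with at most $\lambda$ objects) is still proper in $\cL$; Corollary~\ref{succ-reg-cor} then closes the argument. Let $M^\ast$ be its colimit in $\cL$. Suppose toward contradiction that $\id_{M^\ast} = f^\ast_k g$ for some colimit map $f^\ast_k : FN_{i_k} \to M^\ast$. Because $F$ preserves $\lambda$-directed colimits, $M^\ast$ is $F$ applied to the corresponding colimit in $\ck$, and each $f^\ast_{k'}$ lies in the image of $F$ and is thus a mono. Then $f^\ast_k$, being both a mono and a split epi, is an iso; combining the relation $f^\ast_{k+1} \cdot Ft_{i_k, i_{k+1}} = f^\ast_k$ with the mono-ness of $f^\ast_{k+1}$ yields that $f^\ast_{k+1}$ is also a split epi, hence iso, which forces $Ft_{i_k, i_{k+1}} = (f^\ast_{k+1})^{-1} f^\ast_k$ to be an iso, contradicting the construction.
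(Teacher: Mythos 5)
Your proof is correct, and its skeleton is the same as the paper's: observe that $F$ reflects split epimorphisms, push a proper $(\lambda,\lambda^+)$-system through $F$ via Lemma~\ref{proper-split}, and finish with Corollary~\ref{succ-reg-cor}. The one genuine difference is where the $\lambda$-sized chain is extracted. The paper first invokes Fact~\ref{succ-reg-existence} to get a proper $(\lambda,\lambda^+)$-system with $\lambda$ objects already in $\ck$, and only then applies $F$; you instead apply $F$ to the full system and carry out the chain extraction inside $\cL$, on the image system. Your ordering is arguably the more careful one: Fact~\ref{succ-reg-existence} is stated for categories in which \emph{all} morphisms are monos, whereas the theorem only assumes that morphisms in the image of $F$ are monos (and mono-ness of all of $\ck$'s morphisms does not follow from reflection of isomorphisms without faithfulness). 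By doing the extraction on the image system, every morphism you need to be a mono -- the transition maps $F t_{i_k,i_{k+1}}$ and the colimit maps $f^\ast_k$, the latter being images of colimit maps since $F$ preserves $\lambda$-directed colimits -- is a mono by hypothesis, so your argument uses exactly the stated assumptions. Your verification that the extracted chain is proper (split epi plus mono gives iso, propagating up to force $F t_{i_k,i_{k+1}}$ iso, contradicting the construction) and your cocone-of-inverses argument for why the successor step is always possible are both sound.
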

\begin{proof}
  Since $F$ reflects isomorphisms and all morphisms in the image of $F$ are monos, $F$ reflects split epimorphisms. By Fact \ref{succ-reg-existence}, $\ck$ has a proper $(\lambda, \lambda^+)$-system $\seq{M_i : i \in I}$ with $\lambda$-many objects. By Lemma \ref{proper-split}, $\seq{F M_i : i \in I}$ is a proper $(\lambda, \lambda^+)$-system. By Corollary \ref{succ-reg-cor}, the colimit of this system in $\cL$ has presentability rank $\lambda^+$.
\end{proof}

To \emph{preserve} $\lambda$-presentable objects, a cardinal arithmetic assumption on $\lambda$ suffices:

\begin{fact}\label{functor-pres}
  Let $F: \ck \to \cL$ be a $(\mu, \lambda_0)$-accessible functor. Let $\lambda_1 \ge \lambda_0$ be a regular cardinal such that the image of any $\lambda_0$-presentable object is $\lambda_1$-presentable. If $\lambda > \lambda_1$ is such that $\lambda^-$ is $\mu$-closed, then $F$ preserves $(<\lambda)$-presentable objects.
\end{fact}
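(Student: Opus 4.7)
The plan is to exhibit any $(<\lambda)$-presentable $M\in\ck$ as a retract of an object $N\in\ck$ that is a $\mu$-directed colimit of strictly fewer than $\theta$-many $\lambda_0$-presentables, for a well-chosen regular $\theta<\lambda$. Applying $F$ then writes $F(M)$ as a retract of a $\mu$-directed colimit of $<\theta$-many $\lambda_1$-presentables, whose presentability rank is bounded by Fact~\ref{sys-basic-facts}(\ref{sys-basic-facts-1}). Retracts of $\theta$-presentables being $\theta$-presentable, this yields $F(M)$ $(<\lambda)$-presentable.

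The first (and main) hurdle is the choice of $\theta$. Since $\lambda>\lambda_1\ge\lambda_0\ge\mu\ge\aleph_0$, we have $\lambda\ge\aleph_1$, so there is some regular $\theta_0<\lambda$ witnessing that $M$ is $\theta_0$-presentable. I would then select a regular $\theta\in[\max(\theta_0,\lambda_1),\lambda)$ satisfying $\mu\tlt\theta$. This is where the assumption that $\lambda^-$ is $\mu$-closed is crucial: by Remark~\ref{tlt-rmk} this gives $\mu\tlt\lambda^-$, and a brief case-split on whether $\lambda$ is a successor, a singular limit, or weakly inaccessible (the latter handled by the final sentence of Remark~\ref{tlt-rmk}) locates unboundedly-many regular $\theta<\lambda$ with $\mu\tlt\theta$, from which a large enough one may be selected.

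Having fixed such $\theta$, I apply Fact~\ref{raising-acc} to express $M=\mathrm{colim}_{j\in J} N_j$ as a $\theta$-directed colimit, where each $N_j$ is itself a $\mu$-directed colimit of strictly fewer than $\theta$-many $\lambda_0$-presentables. Because $M$ is $\theta$-presentable and $J$ is $\theta$-directed, $\id_M$ factors through some colimit leg $f_{j_0}\colon N_{j_0}\to M$, exhibiting $M$ as a retract of $N_{j_0}$. Since $F$ preserves $\mu$-directed colimits and preserves retracts, $F(M)$ is a retract of $F(N_{j_0})$, and $F(N_{j_0})$ is a $\mu$-directed colimit of $<\theta$-many $\lambda_1$-presentables. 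Fact~\ref{sys-basic-facts}(\ref{sys-basic-facts-1}) then bounds the presentability rank of $F(N_{j_0})$ by $|I_{j_0}|^+ + \lambda_1 \le \theta$ (using regularity of $\theta$ and $\theta\ge\lambda_1$), so $F(N_{j_0})$, and hence its retract $F(M)$, is $\theta$-presentable, thus $(<\lambda)$-presentable as $\theta<\lambda$.
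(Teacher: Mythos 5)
Your proposal is correct and takes essentially the same route as the paper's proof: pick a regular $\theta\in[\lambda_1,\lambda)$ with $M$ $\theta$-presentable and $\mu\tlt\theta$ (the paper constructs such a $\theta$ explicitly as $\lambda^-$ or $\left(\left(\theta_0+\lambda_1\right)^{<\mu}\right)^+$ and notes it is $\mu$-closed), use the ``moreover'' part of Fact~\ref{raising-acc} to exhibit $M$ as a retract of a $\mu$-directed colimit of fewer than $\theta$-many $\lambda_0$-presentables, then apply $F$ and conclude via Fact~\ref{sys-basic-facts}(\ref{sys-basic-facts-1}) and closure of $\theta$-presentables under retracts.
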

\begin{proof}
  This is similar to the proof of \cite[2.19]{adamek-rosicky}. We include some details for the convenience of the reader. Let $A$ be a $(<\lambda)$-presentable object of $\ck$. Pick a regular $\theta_0 < \lambda$ such that $A$ is $\theta_0$-presentable. If $\lambda$ is a successor and $\lambda^-$ is regular, let $\theta := \lambda^-$. Otherwise, $\lambda^-$ is limit and we let $\theta := \left(\left(\theta_0 + \lambda_1\right)^{<\mu}\right)^+$. In either case, we have that $\lambda_1 \le \theta < \lambda$ (because $\lambda^-$ is $\mu$-closed), $\theta$ is regular, $\theta$ is $\mu$-closed (by \cite[2.13(5)]{adamek-rosicky}), and $A$ is $\theta$-presentable. By the proof of \cite[2.3.10]{makkai-pare} (see \cite[2.15]{adamek-rosicky}), $A$ is a $\theta$-directed colimit of a diagram consisting of $\mu$-directed colimits of fewer than $\theta$-many $\lambda_0$-presentable objects. Since $A$ is $\theta$-presentable, $A$ is a retract of such a $\mu$-directed colimit. Since $F$ preserves $\mu$-directed colimits and any functor preserves retractions, $F A$ is a retract of a $\mu$-directed colimit of fewer than $\theta$-many $\lambda_1$-presentable objects. By Fact \ref{sys-basic-facts}(\ref{sys-basic-facts-1}), $F A$ is $\theta$-presentable, hence $(<\lambda)$-presentable.
\end{proof}
\begin{remark}
  Instead of $\lambda^-$ $\mu$-closed, it suffices to assume that $\mu \tlt \lambda^-$ (see Definition \ref{sch-def}(\ref{tlt-def}) and Remark \ref{tlt-rmk}). Conversely, if $\mu < \lambda$ are cardinals such that $\mu$ is regular and every $\mu$-accessible functor which preserves $\mu$-presentable objects also preserves $(<\lambda)$-presentable objects, then $\mu \tlt \lambda^-$ (consider the functor $A \mapsto [A]^{<\mu}$ from the category of sets to the category of $\mu$-directed posets).
\end{remark}

We obtain the following cardinal arithmetic test for preservation and reflection of ranks:

\begin{lem}\label{rank-pres-2}
  Let $F: \ck \to \cL$ be a $(\mu, \lambda_0)$-accessible functor that preserves $\lambda_0$-presentable objects and reflects split epimorphisms. 

  If $\theta > \lambda_0$ is the successor of a $\mu$-closed cardinal of cofinality at least $\mu$, then $F$ preserves and reflects $\theta$-ranked objects.
\end{lem}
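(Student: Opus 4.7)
The plan is to derive the conclusion from Lemma \ref{rank-pres-1} applied in both directions, which reduces to four claims: $F$ preserves (respectively reflects) $(<\theta)$-presentable objects, and $F$ preserves (respectively reflects) $\theta$-presentable objects. Write $\theta = \lambda^+$ with $\lambda$ $\mu$-closed of cofinality at least $\mu$. A crucial preliminary is that $\theta$ itself is also $\mu$-closed: Fact \ref{sch-basic}(1) gives $\lambda^{<\mu} = \lambda$, so for every $\theta_0 < \theta$ we have $\theta_0 \le \lambda$ and hence $\theta_0^{<\mu} \le \lambda < \theta$.

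The two preservation statements then follow directly from Fact \ref{functor-pres} (taking $\lambda_1 = \lambda_0$, since $F$ preserves $\lambda_0$-presentables by hypothesis): applying it with its ``$\lambda$'' equal to $\theta$ uses that $\theta^- = \lambda$ is $\mu$-closed and yields preservation of $(<\theta)$-presentables, while applying it with ``$\lambda$'' equal to $\theta^+$ uses that $(\theta^+)^- = \theta$ is $\mu$-closed (by the preliminary) and yields preservation of $(<\theta^+)$-presentables, i.e., of $\theta$-presentables.

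For the two reflection statements I would mimic the argument of Fact \ref{functor-refl} rather than invoke it verbatim, since its hypothesis demanding $(\nu, <\theta)$-accessibility for \emph{all} regular $\nu$ in a range is awkward to verify here directly. To reflect $(<\theta)$-presentables: suppose $FM$ is $\mu^*$-presentable for some regular $\mu^* < \theta$, WLOG $\mu^* \ge \lambda_0$. The first step is to produce a regular $\mu$-closed $\nu^* \in [\mu^*, \theta)$; when $\lambda$ is regular, $\nu^* := \lambda$ works, and when $\lambda$ is singular a standard closure argument using that $\theta$ is $\mu$-closed and $\cf{\lambda} \ge \mu$ yields cofinally many cardinals $\chi < \lambda$ with $\chi^{<\mu} = \chi$, from which $\chi^+$ for suitable $\chi > \mu^*$ furnishes a regular $\mu$-closed $\nu^* \in [\mu^*, \theta)$. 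Since $\nu^*$ is regular and $\mu$-closed we have $\mu \tlt \nu^*$, so Fact \ref{raising-acc} upgrades both $\ck$ and $\cL$ to $\nu^*$-accessibility, and hence to $(\nu^*, <\theta)$-accessibility by the standard fact that $\nu^*$-accessible categories have a set (up to iso) of $\nu'$-presentable objects for every regular $\nu' \ge \nu^*$. Write $M$ as the colimit of a $(\nu^*, <\theta)$-system in $\ck$; applying $F$, which preserves $\mu$-directed colimits and (by the previous paragraph) $(<\theta)$-presentables, produces a $(\nu^*, <\theta)$-system in $\cL$ with colimit $FM$. Since $FM$ is $\nu^*$-presentable, Fact \ref{sys-basic-facts}(\ref{sys-basic-facts-2}) forces this system not to be proper, and Lemma \ref{proper-split} then forces the original system not to be proper, so by Fact \ref{sys-basic-facts}(\ref{sys-basic-facts-2}) again $M$ is $(<\theta)$-presentable. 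Reflection of $\theta$-presentables is entirely parallel, applied with $\theta^+$ in place of $\theta$ and taking $\nu^* := \theta$ itself as the raising target (which is regular, $\mu$-closed, and strictly below $\theta^+$).

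The main obstacle I anticipate is the choice of $\nu^*$ in the singular case; this is exactly where the hypothesis $\cf{\lambda} \ge \mu$ is essential, combining with the $\mu$-closedness of $\lambda$ to guarantee enough regular $\mu$-closed cardinals below $\theta$ to absorb any prescribed $\mu^*$.
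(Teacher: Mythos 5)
Your proof is correct and follows essentially the same route as the paper's: decompose via Lemma \ref{rank-pres-1}, obtain both preservation statements from Fact \ref{functor-pres} using that $\lambda$ and $\theta=\lambda^+$ are $\mu$-closed, and obtain both reflection statements from the proper-system argument together with Fact \ref{raising-acc}. The only difference is that where the paper simply cites Facts \ref{raising-acc} and \ref{functor-refl} for reflection, you inline the proof of Fact \ref{functor-refl} after first producing a single regular $\mu$-closed $\nu^\ast \in [\mu^\ast,\theta)$ --- a legitimate (and arguably more careful) way to sidestep the fact that the literal hypotheses of Fact \ref{functor-refl} ask for $(\nu,<\theta)$-accessibility at \emph{every} regular $\nu$ in an interval, which is exactly the awkwardness you identified.
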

\begin{proof}
  Write $\theta = \lambda^+$, with $\lambda$ a $\mu$-closed cardinal of cofinality at least $\mu$. Note that $\lambda^{<\mu} = \lambda$, since it has cofinality at least $\mu$. Thus both $\lambda$ and $\theta$ are $\mu$-closed. This implies that $F$ preserves $\theta$-presentable objects and preserves $(<\theta)$-presentable objects (Fact \ref{functor-pres}). We also have that $F$ reflects $(<\theta)$-presentable objects and $F$ reflects $\theta$-presentable objects (use Facts \ref{raising-acc} and \ref{functor-refl}). Now apply Lemma \ref{rank-pres-1}.
\end{proof}

Using Corollary \ref{recovered}, we obtain the following existence spectrum result if the domain of the functor is a large $\mu$-AEC:

\begin{lem}\label{mu-aec-func}
  Let $\K$ be a $\mu$-AEC, let $\lambda_0 > \LS (\K)$ be regular, and let $F: \K \to \cL$ be a $(\mu, \lambda_0)$-accessible functor that preserves $\lambda_0$-presentable objects and reflects isomorphisms. Let $\lambda \ge \lambda_0$ be a $\mu$-closed cardinal of cofinality at least $\mu$. If $\K$ has an object of cardinality at least $\lambda$, then $\cL$ has an object of presentability rank $\lambda^+$.
\end{lem}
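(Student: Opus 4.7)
The plan is to combine Corollary \ref{recovered}(1)---which already produces an object of presentability rank $\lambda^+$ \emph{inside} $\K$---with Lemma \ref{rank-pres-2}, which will let us transport this rank across $F$ to $\cL$.

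First, I would apply Corollary \ref{recovered}(1) to $\K$ with our $\lambda$: by hypothesis $\lambda \ge \lambda_0 > \LS (\K)$ and $\lambda$ is $\mu$-closed, and $\K$ has an object of cardinality at least $\lambda$. The side condition $\lambda = \lambda^{<\mu}$ follows from Fact \ref{sch-basic}(1), since $\lambda$ is $\mu$-closed and has cofinality at least $\mu$. This produces an object $M \in \K$ with $r_\K(M) = \lambda^+$.

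Next, I would invoke Lemma \ref{rank-pres-2} with $\theta := \lambda^+$: indeed $\theta$ is the successor of the $\mu$-closed cardinal $\lambda$ of cofinality at least $\mu$ and satisfies $\theta > \lambda_0$. Since $F$ is $(\mu, \lambda_0)$-accessible and preserves $\lambda_0$-presentable objects by hypothesis, the lemma will yield $r_\cL(FM) = r_\K(M) = \lambda^+$, giving the desired object of presentability rank $\lambda^+$ in $\cL$.

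The main obstacle is verifying the ``reflects split epimorphisms'' hypothesis of Lemma \ref{rank-pres-2} from the weaker assumption that $F$ merely reflects isomorphisms. The key leverage is that $\K$ is a $\mu$-AEC, so every morphism of $\K$ is a mono and hence a split epi in $\K$ is automatically an isomorphism. Thus it suffices to show that whenever $Ff$ is a split epi in $\cL$ it is in fact an iso there, so that $F$ reflecting isos then yields $f$ iso (equivalently split epi) in $\K$. The step ``split epi implies iso'' for $Ff$ amounts to $Ff$ being a mono in $\cL$; this holds in the principal intended application---where $F$ is the inclusion $\ck_{mono} \hookrightarrow \ck$ of Fact \ref{mono-example}---since image morphisms are monos by construction, but would need additional care in full generality.
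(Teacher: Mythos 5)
Your proof is essentially the paper's own: apply Corollary \ref{recovered}(1) (using $\lambda=\lambda^{<\mu}$ via Fact \ref{sch-basic}) to get $M\in\K$ of rank $\lambda^+$, then transport the rank along $F$ via Lemma \ref{rank-pres-2} with $\theta=\lambda^+$. The one point you flag is also the only delicate one, and your analysis of it is accurate: the paper dispatches it with the single line ``since all morphisms of $\K$ are monos, $F$ reflects split epimorphisms,'' which, as you observe, really uses that $Ff$ is a mono in $\cL$ (so that split epi plus mono gives iso, and then reflection of isomorphisms applies) --- exactly the reasoning made explicit in Theorem \ref{succ-reg-functor}, where ``all morphisms in the image of $F$ are monos'' appears as a stated hypothesis. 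In the lemma's sole application (Theorem \ref{no-mono-thm}(2), where $F$ factors through the inclusion of $\ck_{mono}$ as in Fact \ref{mono-example}) the image of $F$ does consist of monos, so nothing is lost there; but you are right that, as literally stated, the lemma's hypotheses do not obviously force this, and the paper's one-line justification glosses over the same point you identify.
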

\begin{proof}
  Since all morphisms of $\K$ are monos, $F$ reflects split epimorphisms. Since $\lambda$ is $\mu$-closed and has cofinality at least $\mu$, $\lambda = \lambda^{<\mu}$. By Corollary \ref{recovered}, $\K$ has an object $M$ of presentability rank $\lambda^+$. By Lemma \ref{rank-pres-2} (where $\theta$ there stand for $\lambda^+$ here), $F$ preserves $\lambda^+$-ranked objects, so $F M$ has presentability rank $\lambda^+$.
\end{proof}

Putting all the results together, we obtain an existence spectrum result for any large accessible category. This extends Corollary \ref{recovered}.

\begin{thm}\label{no-mono-thm}
  Let $\ck$ be a large $\mu$-accessible category.

  \begin{enumerate}
  \item For every high-enough regular $\lambda$ such that $\lambda = \lambda^{<\mu}$, $\ck$ has an object of presentability rank $\lambda^+$.
  \item There exists a regular cardinal $\mu'$ such that for every high-enough $\mu'$-closed cardinal $\lambda$ of cofinality at least $\mu'$, $\ck$ has an object of presentability rank $\lambda^+$.
  \end{enumerate}
\end{thm}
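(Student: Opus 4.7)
The strategy is to reduce both parts to the $\mu$-AEC setting by passing to the subcategory $\ck_{mono}$ of monomorphisms and applying the functor-based machinery developed earlier in this section. By Fact \ref{mono-example}, fix $\lambda_0 \ge \mu$ so that the inclusion functor $F: \ck_{mono} \to \ck$ is $(\mu, \lambda_0)$-accessible; by Fact \ref{maecsacc}, $\ck_{mono}$ is equivalent to a $\mu$-AEC $\K$. The category $\ck_{mono}$ has the same objects as $\ck$ and its isomorphisms coincide with those of $\ck$, so $\K$ is large; moreover $F$ reflects isomorphisms. Increasing $\lambda_0$ if needed (so as to exceed $\LS(\K)$ and to bound the presentability ranks in $\ck$ of a set of representatives of the $\lambda_0$-presentable objects of $\ck_{mono}$), Facts \ref{raising-acc} and \ref{functor-pres} let us arrange that $F$ is still $(\mu, \lambda_0)$-accessible and now also preserves $\lambda_0$-presentable objects.

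For \emph{part (2)}, set $\mu' := \lambda_0$. For any $\mu'$-closed cardinal $\lambda \ge \mu'$ of cofinality at least $\mu'$, $\lambda$ is also $\mu$-closed of cofinality at least $\mu$, and largeness of $\K$ provides an object of cardinality at least $\lambda$. Lemma \ref{mu-aec-func} applied to $F$ then delivers an object of presentability rank $\lambda^+$ in $\ck$.

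For \emph{part (1)}, fix $\lambda \ge \lambda_0$ regular with $\lambda = \lambda^{<\mu}$. A cardinal-arithmetic computation shows $\mu \tlt \lambda$ (which holds even when $\lambda$ is not $\mu$-closed). Consequently, via Facts \ref{raising-acc} and \ref{functor-pres}, $F$ is $(\lambda, \lambda^+)$-accessible and preserves $\lambda^+$-presentable objects. All morphisms in the image of $F$ are monos, $F$ reflects isomorphisms, and $\K$ has objects that are not $\lambda^+$-presentable (by largeness), so Theorem \ref{succ-reg-functor} applies and produces an object of presentability rank $\lambda^+$ in $\ck$.

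The \emph{main obstacle} is the cardinal-arithmetic bookkeeping: one must verify $\mu \tlt \lambda$ for $\lambda = \lambda^{<\mu}$ regular in part (1), even in the absence of $\mu$-closedness of $\lambda$ (as happens, e.g., when $\lambda = \omega_2 = 2^{\aleph_0}$ and $\mu = \omega_1$), and in both parts one must juggle the compatibility of the indices of accessibility, the preservation of presentable objects under $F$, and the Löwenheim-Skolem number of $\K$ so that the cleaner statements of Lemma \ref{mu-aec-func} and Theorem \ref{succ-reg-functor} apply uniformly.
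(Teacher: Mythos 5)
Your overall strategy is the same as the paper's: pass to $\ck_{mono}$, use Fact \ref{mono-example} to get an accessible inclusion functor reflecting isomorphisms, then apply Theorem \ref{succ-reg-functor} for part (1) and Lemma \ref{mu-aec-func} (via the equivalence with an AEC) for part (2). Part (2) matches the paper's proof essentially verbatim, up to one imprecision: Fact \ref{maecsacc} produces a $\mu'$-AEC only from a $\mu'$-accessible (i.e.\ $(\mu',\mu')$-accessible) category, and $\ck_{mono}$ is only known to be $(\mu,\lambda_0)$-accessible, so after raising the index you obtain a $\mu'$-AEC for some regular $\mu'$ possibly larger than $\mu$ --- this is exactly why the statement of part (2) quantifies over a new $\mu'$. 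Your choice $\mu' := \lambda_0$ is in the right spirit, but Lemma \ref{mu-aec-func} must then be run with the arity of that AEC rather than with $\mu$ itself.

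The genuine problem is in part (1): the claim that $\lambda = \lambda^{<\mu}$ regular implies $\mu \tlt \lambda$ is false. Under GCH take $\mu = \aleph_1$ and $\lambda = \aleph_{\omega+1}$: then $\lambda^{\aleph_0} = \lambda$, but $\cf{[\aleph_\omega]^{\aleph_0}} \ge \aleph_{\omega+1}$ (no family of $\aleph_\omega$ many countable subsets of $\aleph_\omega$ is cofinal, by the usual diagonalization across the $\aleph_{n+1}$'s), so $\mu \not\tlt \lambda$. What does follow from $\lambda^{<\mu} = \lambda$ is $\mu \tlt \lambda^+$, since $\cf{[\theta]^{<\mu}} \le \theta^{<\mu} \le \lambda < \lambda^+$ for all $\theta \le \lambda$. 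This suffices to repair the step: apply Fact \ref{raising-acc} with $\theta = \lambda^+$ to conclude that $\ck_{mono}$ and $\ck$ are $(\lambda^+,\lambda^+)$-accessible; since a $\lambda^+$-directed system is in particular $\lambda$-directed and both categories have $\mu$-directed (hence $\lambda$-directed) colimits, they are $(\lambda,\lambda^+)$-accessible and $F$ is a $(\lambda,\lambda^+)$-accessible functor. Fact \ref{functor-pres} (applied with $\lambda^{++}$ in the role of its $\lambda$, noting that $\lambda^+$ is $\mu$-closed because $\lambda^{<\mu}=\lambda$) gives preservation of $\lambda^+$-presentables, and Theorem \ref{succ-reg-functor} then applies as you say. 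So the architecture of your argument is correct and agrees with the paper's; only the cardinal-arithmetic justification of $(\lambda,\lambda^+)$-accessibility needs to be rerouted through $\mu \tlt \lambda^+$.
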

\begin{proof} \
  \begin{enumerate}
  \item  By Fact \ref{mono-example}, there exists a cardinal $\lambda_0$ such that the inclusion functor $F$ of $\ck_{mono}$ into $\ck$ is $(\mu, \lambda_0)$-accessible. Of course, $F$ also reflects isomorphisms. Let $\lambda > \lambda_0$ be regular such that $\lambda = \lambda^{<\mu}$. By Fact \ref{functor-pres}, $F$ preserves $\lambda^+$-presentable objects and by Fact \ref{raising-acc}, $F$ is $(\lambda, \lambda^+)$-accessible. Since $\ck$ is large, $\ck_{mono}$ is also large, so by Theorem \ref{succ-reg-functor}, $\ck$ has an object of presentability rank $\lambda^+$.
  \item As before, $\ck_{mono}$ is an accessible category with all morphisms monos, so by Fact \ref{maecsacc}, it is equivalent to a $\mu'$-AEC $\K^\ast$, for some regular cardinal $\mu'$. Let $F: \K^\ast \to \ck$ be the composition of the equivalence with the inclusion of $\ck_{mono}$ into $\ck$. Then $F$ is $(\mu', \lambda_0)$-accessible, for some regular cardinal $\lambda_0 > \LS (\K)$, and $F$ reflects isomorphisms. Taking $\lambda_0$ bigger if needed (and using Fact \ref{functor-pres}), we can assume without loss of generality that $F$ also preserves $\lambda_0$-presentable objects. Let $\lambda \ge \lambda_0$ be a $\mu'$-closed cardinal of cofinality at least $\mu'$. Since $\K$ is large, Lemma \ref{mu-aec-func} applies and so $\cL$ has an object of presentability rank $\lambda^+$.
  \end{enumerate}
\end{proof}

We obtain the main result of this section. This extends for example \cite[A.2]{internal-sizes-jpaa} --- weak LS-accessibility of large locally multipresentable categories --- at the cost of $\ESCH$:

\begin{cor}\label{esch-no-mono}
  Assuming $\ESCH$, any large accessible category has objects of all internal sizes of high-enough cofinality. In particular, any large accessible category is weakly LS-accessible.
\end{cor}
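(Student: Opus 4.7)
The plan is to derive the corollary directly from Theorem \ref{no-mono-thm} by using $\ESCH$ to verify the cardinal arithmetic hypotheses of its two parts. First I would fix a regular $\mu$ such that $\ck$ is $\mu$-accessible and let $\mu' \ge \mu$ be the regular cardinal produced by Theorem \ref{no-mono-thm}(2), so that $\ck$ has an object of presentability rank $\lambda^+$ whenever $\lambda$ is a sufficiently large $\mu'$-closed cardinal with $\cf{\lambda} \ge \mu'$; part (1) of the same theorem gives the same conclusion whenever $\lambda$ is a sufficiently large regular cardinal satisfying $\lambda = \lambda^{<\mu}$. The goal is to show that, under $\ESCH$, every sufficiently large $\lambda$ with $\cf{\lambda} \ge \mu'$ falls into one of these two cases, producing an object of internal size $\lambda$.

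For singular $\lambda$ with $\cf{\lambda} \ge \mu'$, the argument is immediate: $\lambda$ is a limit cardinal and so, in particular, not the successor of a cardinal of cofinality $<\mu'$, hence by Fact \ref{sch-basic}(2) applied under $\ESCH$ it is $\mu'$-closed provided it is large enough, and Theorem \ref{no-mono-thm}(2) fires. For regular $\lambda$ the cofinality assumption $\cf{\lambda} \ge \mu'$ is automatic once $\lambda \ge \mu'$, and I would invoke Theorem \ref{no-mono-thm}(1); the only thing to check is $\lambda^{<\mu} = \lambda$. When $\lambda = \theta^+$ is a successor, Hausdorff's formula gives $\lambda^\kappa = \lambda \cdot \theta^\kappa$ for each $\kappa < \mu$, and under $\ESCH$ one has $\theta^\kappa \le \theta^+ = \lambda$ for large $\theta$ (since $\SCH$ bounds every $\theta^\kappa$ with $\kappa < \mu$ by $2^{\cf{\theta}} + \theta^+$, which is at most $\theta^+$ once $\theta$ exceeds the ambient $2^{<\mu}$); for weakly inaccessible $\lambda$ an easier direct bound works. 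Either way $\lambda^{<\mu} = \lambda$ and part (1) applies.

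The ``in particular'' clause is then immediate from Definition \ref{ls-acc-def}: a presentability rank that is the successor of a regular cardinal corresponds to a regular internal size $\lambda$, and a regular $\lambda$ trivially satisfies $\cf{\lambda} = \lambda \ge \mu'$ once $\lambda$ is large enough, so weak LS-accessibility follows from the regular case handled above. The main subtlety in the proof is the regular successor case $\lambda = \theta^+$ with $\cf{\theta} < \mu'$: such a $\lambda$ need not itself be $\mu'$-closed even under $\ESCH$ (because $\theta^{<\mu'}$ may equal $\theta^+ = \lambda$), so Theorem \ref{no-mono-thm}(2) does not apply directly and one is forced to route through Hausdorff's formula and invoke Theorem \ref{no-mono-thm}(1) instead.
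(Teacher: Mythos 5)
Your proposal is correct and follows essentially the same route as the paper: split on whether $\lambda$ is regular or singular, apply Theorem \ref{no-mono-thm}(1) in the regular case after checking $\lambda = \lambda^{<\mu}$ under $\ESCH$, and apply Theorem \ref{no-mono-thm}(2) in the singular case after noting that a singular (hence limit) cardinal of cofinality at least $\mu'$ is $\mu'$-closed by Fact \ref{sch-basic}. Your extra remarks (the Hausdorff computation for successor regulars and the observation that a successor of a cardinal of small cofinality need not be $\mu'$-closed) correctly justify details the paper leaves to the cited cardinal-arithmetic fact.
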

\begin{proof}
  Let $\ck$ be a large $\mu$-accessible category, and let $\mu'$ be as given by Theorem \ref{no-mono-thm}. Let $\lambda$ be a high-enough cardinal of cofinality at least $\mu'$ (the proof will give how big we need to take it). If $\lambda$ is a regular cardinal, then (by $\ESCH$, see Fact \ref{sch-basic}) $\lambda = \lambda^{<\mu}$, so by Theorem \ref{no-mono-thm}, $\ck$ has an object of presentability rank $\lambda^+$. Assume now that $\lambda$ is a singular cardinal. Then $\lambda$ is in particular a limit cardinal, so (by $\ESCH$) $\lambda$ is $\mu'$-closed. By Theorem \ref{no-mono-thm}, $\ck$ has an object of presentability rank $\lambda^+$.
\end{proof}

\section{Filtrations}\label{secresolv}

We consider conditions under which, in a general category, we can ensure than any object is not merely the colimit of an appropriately directed system of objects of strictly smaller internal size, but rather the colimit of a \emph{chain} of such objects.  The existence of such \emph{filtrations} (sometimes also called \emph{resolutions}) is crucial to a host of model-theoretic constructions, and should be of considerable use in the further development of classification theory at the present level of generality.

\begin{defin}
  For $\mu$ a regular cardinal and $\lambda$ an infinite cardinal, a \emph{$(\mu, <\lambda)$-chain} (in a category $\ck$) is a diagram $\seq{M_i : i < \mu}$ indexed by $\mu$, all of whose objects are $(<\lambda)$-presentable. We call $\mu$ the \emph{length} of the chain. A \emph{$(<\lambda)$-chain} is a $(\mu, <\lambda)$-chain for some regular $\mu < \lambda$. For $\lambda$ a regular cardinal, a \emph{$\lambda$-chain} is a $(<\lambda^+)$-chain. 

  For $\theta$ a regular cardinal, we say that a chain $\seq{M_i : i < \mu}$ is \emph{$\theta$-smooth} if for every $i < \mu$ of cofinality at least $\theta$, $M_i$ is the colimit of $\seq{M_j : j < i}$.
\end{defin}

Note that $(\mu, <\lambda)$-chains are $(\mu, <\lambda)$-systems in the sense of Definition \ref{systems-def}. We will use the terminology of systems introduced in the preliminaries. The reader may also wonder why we are looking only at chains indexed by a regular cardinal. This is because any system $\seq{M_i : i \in I}$ indexed by a linear order $I$ has a cofinal subsystem of the form $\seq{M_{i_j} : i_j < \mu}$, where $\mu$ is the cofinality of $I$.

The next definition is the object of study of this section: 

\begin{defin}
  Let $\ck$ be a category. A \emph{filtration} of an object $M$ is a $(<r_{\ck} (M))$-chain with colimit $M$. We call $M$ \emph{filtrable} if it (has a presentability rank and) has a filtration.
\end{defin}

Observe that the length of a filtration is determined by the presentability rank:

\begin{lem}\label{resol-length}
  Let $\lambda$ be a regular cardinal and let $\ck$ be a category. If there exists a $(<\lambda)$-chain whose colimit is not $(<\lambda)$-presentable, then $\lambda$ is a successor and the chain must have length $\cf{\lambda^-}$. In particular, any filtrable object $M$ has successor presentability rank and any of its filtrations will have length $\cf{|M|_{\ck}}$.
\end{lem}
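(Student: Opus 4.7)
Let $\seq{M_i : i < \mu}$ denote the $(<\lambda)$-chain (with $\mu < \lambda$ regular) whose colimit $M$ is not $(<\lambda)$-presentable, and write $\theta_i$ for the regular presentability rank of $M_i$. The statement breaks into three parts: (a) $\lambda$ is a successor cardinal, (b) writing $\lambda = \kappa^+$, we have $\mu = \cf{\kappa}$, and (c) the ``in particular'' consequence. For (a) and (b) the strategy is uniform: supposing the conclusion fails, I will exhibit a regular $\theta \le \lambda^-$ and a cofinal subsystem of the chain that is a $(\mu, \theta)$-system with $\mu$ objects, at which point Fact~\ref{sys-basic-facts}(\ref{sys-basic-facts-1}) bounds the presentability of $M$ by $\mu^+ + \theta \le \lambda^-$, giving that $M$ is $(<\lambda)$-presentable, a contradiction. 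Note that since $\mu$ is regular, the chain itself is automatically a $(\mu, <\lambda)$-system in the sense of Definition~\ref{systems-def}.

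For (a), suppose $\lambda$ is weakly inaccessible (the case $\lambda = \aleph_0$ is vacuous). Since each $\theta_i < \lambda$, $\mu < \lambda$, and $\lambda$ is regular, $\theta := (\sup_{i<\mu}\theta_i)^+ < \lambda$, and the entire chain is already a $(\mu, \theta)$-system with $\mu$ objects; the strategy above applies directly.

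For (b), write $\lambda = \kappa^+$, so $\mu \le \kappa$, and suppose $\mu \ne \cf{\kappa}$. If $\mu < \cf{\kappa}$, take $\theta := \kappa$ when $\kappa$ is regular (each $\theta_i \le \kappa$ automatically), and $\theta := (\sup_i \theta_i)^+$ when $\kappa$ is singular: in the singular case $\mu < \cf{\kappa}$ forces $\sup_i \theta_i < \kappa$, and $\theta < \kappa$ because $\kappa$ is a limit cardinal. In either case the original chain is a $(\mu, \theta)$-system and the strategy yields a contradiction. If instead $\mu > \cf{\kappa}$, then $\kappa$ must be singular (else $\mu > \kappa$ contradicts $\mu \le \kappa$). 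Fix a strictly increasing cofinal sequence $\seq{\kappa_\alpha : \alpha < \cf{\kappa}}$ in $\kappa$ and set $J_\alpha := \{i < \mu : \theta_i < \kappa_\alpha\}$. Then $\mu = \bigcup_{\alpha < \cf{\kappa}} J_\alpha$; since $\mu$ is regular and $\mu > \cf{\kappa}$, some $J_{\alpha^\ast}$ must be unbounded (and hence cofinal) in $\mu$ — otherwise the sup of the bounds of the $J_\alpha$ would be $<\mu$ and cover $\mu$. The restricted diagram $\seq{M_i : i \in J_{\alpha^\ast}}$ is a cofinal subdiagram of the chain (so shares its colimit $M$), is $\mu$-directed (as a linear order of cofinality $\mu$), and consists of $\mu$ many $\kappa_{\alpha^\ast}^+$-presentable objects; Fact~\ref{sys-basic-facts}(\ref{sys-basic-facts-1}) then gives $M$ as $(\mu^+ + \kappa_{\alpha^\ast}^+)$-presentable with both summands strictly below $\kappa$ (using that $\mu < \kappa$ and $\kappa_{\alpha^\ast} < \kappa$, with $\kappa$ a singular limit), yielding the desired contradiction. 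The main technical obstacle is exactly this second subcase, where the $\theta_i$ need not be bounded below $\kappa$ on the original index set and one must extract a cofinal subchain with uniformly bounded presentability by a pigeonhole argument.

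For (c), a filtrable $M$ has, by definition, a filtration which is a $(<r_\ck(M))$-chain with colimit $M$; by the definition of presentability rank, $M$ is not $(<r_\ck(M))$-presentable. Applying (a) and (b) with $\lambda := r_\ck(M)$ then yields that $r_\ck(M)$ is a successor and that every such filtration has length $\cf{r_\ck(M)^-} = \cf{|M|_\ck}$.
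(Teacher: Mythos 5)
Your proof is correct and follows essentially the same route as the paper's: rule out weakly inaccessible $\lambda$ via Fact~\ref{sys-basic-facts}(\ref{sys-basic-facts-1}), then for $\lambda=\kappa^+$ derive a contradiction from $\mu\neq\cf{\kappa}$ by producing a cofinal $(\mu,\theta)$-subsystem with $\theta$ and the number of objects both bounded below $\lambda$. The only cosmetic difference is that you carry out the pigeonhole extraction in the case $\mu>\cf{\kappa}$ by hand, where the paper invokes its Lemma~\ref{cofin-dir}, and you organize the cases by comparing $\mu$ with $\cf{\kappa}$ rather than by whether $\lambda^-$ is regular or singular.
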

\begin{proof}
  Let $\mu < \lambda$ be a regular cardinal and let $\seq{M_i : i < \mu}$ be a $(\mu, <\lambda)$-chain in $\ck$ with a colimit $M$ that is not $(<\lambda)$-presentable. By Fact \ref{sys-basic-facts}(\ref{sys-basic-facts-2}), the chain is proper. Next, assume for a contradiction that $\lambda$ is weakly inaccessible. By Fact \ref{sys-basic-facts}(\ref{sys-basic-facts-1}), $M$ is $(<\lambda)$-presentable, a contradiction to the fact that it has presentability rank $\lambda$. This shows that $\lambda$ is a successor. Let $\lambda_0 = \lambda^-$. We now have to see that $\cf{\lambda_0} = \mu$. We consider two cases depending on whether $\lambda_0$ is regular or singular:

  \begin{itemize}
  \item If $\lambda_0$ is regular, then $\seq{M_i : i < \mu}$ is a $(\mu, \lambda_0)$-system. Since $\mu < \lambda$, we know that $\mu \le \lambda_0$. If $\mu < \lambda_0$, then by Fact \ref{sys-basic-facts}(\ref{sys-basic-facts-1}), $M$ would be $(\mu^+ + \lambda_0)$-presentable, hence $\lambda_0$-presentable, contradicting that it has presentability rank $\lambda$. Thus $\mu = \lambda_0 = \cf{\lambda_0}$.
  \item If $\lambda_0$ is singular, then $\seq{M_i : i < \mu}$ is a $(\mu, <\lambda_0)$-system, and moreover (because $\mu$ is regular) $\mu \neq \lambda_0$, so $\mu^+ < \lambda_0$. If $\cf{\lambda_0} > \mu$, then there exists a regular $\lambda_1 < \lambda_0$ such that $\seq{M_i : i < \mu}$ is a $(\mu, \lambda_1)$-system. By Fact \ref{sys-basic-facts}(\ref{sys-basic-facts-1}), $M$ is $(\mu^+ + \lambda_1)$-presentable, hence $(<\lambda)$-presentable, a contradiction.

    Thus $\cf{\lambda_0} \le \mu$. If $\cf{\lambda_0} < \mu$, let $\seq{\theta_\alpha : \alpha < \cf{\lambda_0}}$ be an increasing chain of regular cardinals cofinal in $\lambda_0$. For $\alpha < \cf{\lambda_0}$, let $I_\alpha := \{i < \mu \mid M_i \text{ is } \theta_\alpha\text{-presentable}\}$. By Lemma \ref{cofin-dir} (applied to $I = \mu$), there exists $\alpha < \cf{\lambda_0}$ so that $I_\alpha$ is cofinal in $\mu$. In particular, $M$ is still the colimit of $\seq{M_i : i \in I_\alpha}$. The latter system is a $(\mu, \theta_\alpha)$-system, hence (again by Fact \ref{sys-basic-facts}(\ref{sys-basic-facts-1})) $M$ is $(\mu^+ + \theta_\alpha)$-presentable, so $(<\lambda)$-presentable, a contradiction. The only remaining possibility is that $\cf{\lambda_0} = \mu$, which is what we wanted to prove.
  \end{itemize}
\end{proof}

It follows that if the category has directed colimits, we can take the filtration to be smooth. More generally:

\begin{lem}\label{smooth-lem}
  Any filtration of an object $M$ of presentability rank $\lambda$ is boundedly $(<\lambda)$-presentable (Definition \ref{bounded-pres-def}). In particular, if $\theta < \lambda$ is regular such that $\ck$ has $\theta$-directed colimits, then $M$ has a $\theta$-smooth filtration.
\end{lem}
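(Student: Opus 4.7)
The plan is to deduce both parts of the lemma from Lemma \ref{resol-length}. Let $\seq{M_i : i < \mu}$ be a filtration of $M$; by Lemma \ref{resol-length}, $\lambda$ is a successor, $\mu$ is a regular cardinal, and $\mu = \cf{\lambda^-}$.

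For the first part, I would take an arbitrary bounded $I_0 \subseteq \mu$ for which the colimit $N$ of the subsystem $\seq{M_i : i \in I_0}$ exists. If $I_0$ has a maximum $i^\ast$, then $N \cong M_{i^\ast}$ is $(<\lambda)$-presentable and we are done. Otherwise the order type of $I_0$ is a limit ordinal of some cofinality $\nu$, and $N$ is already computed by a cofinal subchain of length $\nu$. Since $I_0$ is bounded in the regular cardinal $\mu$, we have $\nu \le |I_0| < \mu$. This cofinal subchain is a $(\nu, <\lambda)$-chain whose colimit is $N$, so Lemma \ref{resol-length} forces $\nu = \cf{\lambda^-} = \mu$ whenever $N$ fails to be $(<\lambda)$-presentable, contradicting $\nu < \mu$. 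Hence $N$ is $(<\lambda)$-presentable.

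For the ``in particular'' clause, I will assume $\theta < \mu$ (otherwise the filtration is already $\theta$-smooth vacuously). The plan is to interleave the original filtration with its own intermediate colimits, setting
\[
N_i := \begin{cases} \operatorname{colim}_{j < i} M_j & \text{if $i$ is a limit ordinal with } \cf{i} \ge \theta, \\ M_i & \text{otherwise.} \end{cases}
\]
When $\cf{i} \ge \theta$, the index set $\{j : j < i\}$ is $\theta$-directed (being $\cf{i}$-directed) and bounded in $\mu$, so the colimit exists by the $\theta$-directed colimit hypothesis and is $(<\lambda)$-presentable by the first part. Transition maps $N_i \to N_{i'}$ for $i < i'$ are the canonical ones obtained from the universal properties of the inserted colimits together with the original maps $M_i \to M_{i'}$, and these automatically commute. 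Since the successor ordinals below $\mu$ form a cofinal subset on which $N_{i+1} = M_{i+1}$, the colimit of $\seq{N_i : i < \mu}$ is again $M$. For $\theta$-smoothness at a limit $i$ with $\cf{i} \ge \theta$, the successor ordinals below $i$ form a cofinal subfamily on which $N_{j+1} = M_{j+1}$, so $\operatorname{colim}_{j < i} N_j = \operatorname{colim}_{j < i} M_j = N_i$.

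The main obstacle is the first part: one must rule out the possibility that a bounded subsystem of a filtration has a colimit of presentability rank exactly $\lambda$. Lemma \ref{resol-length} handles this cleanly by pinning down the cofinality of any would-be ``filtration length'' as $\cf{\lambda^-} = \mu$, a value that cannot be attained by a subset bounded in the regular cardinal $\mu$. Once the first part is in hand, the thickening construction for the second part is routine; the only bookkeeping needed is that the inserted intermediate colimits yield commuting transition maps, which is immediate from universality.
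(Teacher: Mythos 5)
Your proof is correct and follows essentially the same route as the paper's: both arguments rest on the observation that a bounded subchain has cofinality strictly below $\mu = \cf{\lambda^-}$, so its colimit must be $(<\lambda)$-presentable, and both then obtain $\theta$-smoothness by inserting these colimits at limit stages of cofinality at least $\theta$. The only cosmetic difference is that you deduce the presentability bound by reapplying Lemma \ref{resol-length} to the cofinal subchain, whereas the paper invokes Fact \ref{sys-basic-facts}(1) directly; the underlying computation is the same.
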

\begin{proof}
  Let $\mu < \lambda$ be regular, and let $\seq{M_i : i < \mu}$ be a $(<\lambda)$-chain whose colimit is $M$. By Lemma \ref{resol-length}, $\lambda$ is a successor cardinal and $\mu = \cf{\lambda^-}$. Let $\delta < \mu$ be a limit ordinal. We will show that the colimit of $\seq{M_i : i < \delta}$ (assuming it exists) is $(<\lambda)$-presentable. The ``in particular'' part will then follow, since, when $\cf{\delta} \ge \theta$, it suffices to replace $M_\delta$ by the colimit of $\seq{M_i : i < \delta}$.

  Note that $\delta < \mu \le \lambda^-$, so $\delta^+ < \lambda$. By cofinality considerations, there exists a regular cardinal $\lambda_0 < \lambda$ such that for all $i < \delta$, $M_i$ is $\lambda_0$-presentable. By Fact \ref{sys-basic-facts}(\ref{sys-basic-facts-1}), we get that the colimit of $\seq{M_i : i < \delta}$ is $(\delta^+ + \lambda_0)$-presentable, hence $(<\lambda)$-presentable, as desired.  
\end{proof}

Using the definition of presentability, it is also easy to generalize the well known facts that, for objects of regular cardinality (that is, of presentability rank the successor of a regular cardinal), any two smooth filtrations are the same on a \emph{club}: a \emph{closed unbounded} set of indices. See for example \cite[6.11]{mu-aec-jpaa}.

We give a name to categories where every object of a given presentability rank is filtrable:

\begin{defin}\label{defresolving}
  For a regular cardinal $\lambda$, we say an accessible category $\ck$ is \emph{$\lambda$-filtrable} if any object of presentability rank $\lambda$ is filtrable. For a regular cardinal $\mu$, we say that $\ck$ is \emph{well $\mu$-filtrable} if it is $\lambda$-filtrable for any regular $\lambda \ge \mu$. We say $\ck$ is \emph{well filtrable} if it is well $\mu$-filtrable for some regular cardinal $\mu$.
  
  Similarly, we say that $\ck$ is \emph{almost $\lambda$-filtrable} if any object $M$ of presentability rank $\lambda$ is a retract of a filtrable $\lambda$-presentable object $N$ (i.e.\ there exists a split epimorphisms from $N$ to $M$). Define \emph{almost well $\lambda$-filtrable} and \emph{almost well filtrable} as expected.
\end{defin}

\begin{remark}
  The technical notion of being \emph{almost} filtrable is included here because we do not know whether retracts of filtrable objects are filtrable. Of course, in categories where all morphisms are monos, retractions are just isomorphisms and so this technical distinction is irrelevant.
\end{remark}

In the rest of this section, we give some conditions implying existence of filtrations. The next result is our main tool: 

\begin{lem}\label{filtr-lem}
  Let $\ck$ be a category, let $\mu \le \theta \le \lambda$ be given with $\mu$ regular, $\theta$ regular, and $\mu \tlt \lambda$ (see Definition \ref{sch-def}(\ref{tlt-def})). Let $A \in \ck$ be an object of presentability rank $\lambda^+$. If $\ck$ has $\mu$-directed colimits and $A$ is the colimit of a $(\mu, \theta)$-system of cardinality at most $\lambda$, then $A$ is filtrable.
\end{lem}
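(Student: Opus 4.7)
My plan is to construct an increasing chain $\seq{I_\alpha : \alpha < \cf{\lambda}}$ of $\mu$-directed subsets of the indexing poset $I$ of the given system, each of cardinality strictly less than $\lambda$, whose union covers $I$. Setting $N_\alpha$ to be the colimit of $\seq{M_i : i \in I_\alpha}$ (which exists since $\ck$ has $\mu$-directed colimits), Fact \ref{sys-basic-facts}(\ref{sys-basic-facts-1}), applied to the $(\mu, \theta)$-subsystem indexed by $I_\alpha$, shows $N_\alpha$ is $(|I_\alpha|^+ + \theta)$-presentable, hence $(<\lambda^+)$-presentable since both $|I_\alpha|^+$ and $\theta$ are at most $\lambda$. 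Because colimits commute with colimits, $A$ is the colimit (in $\ck$) of the chain $\seq{N_\alpha : \alpha < \cf{\lambda}}$---this works even if $\cf{\lambda} < \mu$, since any cocone on the chain induces one on the full $\mu$-directed system and so factors uniquely through $A$. Thus $\seq{N_\alpha : \alpha < \cf{\lambda}}$ is the desired filtration of $A$; its length $\cf{\lambda}$ matches the prediction of Lemma \ref{resol-length}. (Note $|I| = \lambda$ is forced by $r_{\ck}(A) = \lambda^+$ together with Fact \ref{sys-basic-facts}(\ref{sys-basic-facts-1}).)

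The crux is the following closure lemma: for any $S \subseteq I$ with $|S| < \lambda$, there exists a $\mu$-directed $S' \subseteq I$ containing $S$ with $|S'| < \lambda$. I fix a choice function $f : [I]^{<\mu} \to I$ picking upper bounds (by $\mu$-directedness of $I$) and iterate: $S_0 := S$, $S_{\alpha+1} := S_\alpha \cup f[C_\alpha]$ with $C_\alpha \subseteq [S_\alpha]^{<\mu}$ cofinal of cardinality $<\lambda$ (existing, via a bijection of $S_\alpha$ with $|S_\alpha|$, by $\mu \tlt \lambda$), taking unions at limits and stopping at stage $\mu$. Regularity of $\mu$ makes $S_\mu$ $\mu$-directed: any $X \in [S_\mu]^{<\mu}$ lies inside some $S_\alpha$ with $\alpha < \mu$, and a containing $X' \in C_\alpha$ yields $f(X') \in S_{\alpha+1} \subseteq S_\mu$ as the required upper bound. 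Cardinality control splits in two. If $\lambda$ is regular, then $\mu < \cf{\lambda} = \lambda$ forces $|S_\alpha| < \lambda$ at every stage $\alpha \le \mu$. If $\lambda$ is singular, the definition of $\mu \tlt \lambda$ provides a regular $\lambda_0' \in (|S| + \mu, \lambda]$ with $\mu \tlt \lambda_0'$, and I perform the whole $\mu$-stage iteration inside $\lambda_0'$, where regularity of $\lambda_0'$ and $\mu < \lambda_0'$ keep $|S_\mu| < \lambda_0' \le \lambda$. This singular case---where $\cf{\lambda}$ itself may be $\le \mu$---is the main obstacle.

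Granted the closure lemma, I build the chain by induction on $\alpha < \cf{\lambda}$. Write $\lambda = \sup_{\alpha < \cf{\lambda}} \lambda_\alpha$ with $\lambda_\alpha < \lambda$ strictly increasing, and enumerate $I = \{p_\beta : \beta < \lambda\}$; at stage $\alpha$, set $T_\alpha := \bigcup_{\beta < \alpha} I_\beta \cup \{p_\gamma : \gamma < \lambda_\alpha\}$. Since $|\alpha| < \cf{\lambda}$, both the number of previously defined $I_\beta$'s and the supremum of their cardinalities (all $<\lambda$) remain strictly below $\lambda$, so $|T_\alpha| < \lambda$. Applying the closure lemma yields a $\mu$-directed $I_\alpha \supseteq T_\alpha$ with $|I_\alpha| < \lambda$, and by construction $\bigcup_{\alpha < \cf{\lambda}} I_\alpha = I$, which completes the argument.
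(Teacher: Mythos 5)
Your proposal is correct and follows essentially the same route as the paper: exhaust the index poset $I$ by an increasing chain of $\mu$-directed subposets of cardinality $<\lambda$ (obtained from $\mu \tlt \lambda$ by a closure argument), take the colimits of the resulting subsystems, and invoke Fact \ref{sys-basic-facts}(\ref{sys-basic-facts-1}) to see they are $(<\lambda^+)$-presentable. The paper states the existence of such an exhaustion as "a direct consequence of the definition of $\tlt$" and indexes it by $\lambda$ rather than $\cf{\lambda}$; you supply the iteration argument in full (including the singular-$\lambda$ case via a regular $\lambda_0'$ with $\mu \tlt \lambda_0'$) and index by $\cf{\lambda}$, which matches the length forced by Lemma \ref{resol-length} and the requirement that a chain be indexed by a regular cardinal.
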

\begin{proof}
  Let $D: I \to \ck$ be a $(\mu, \theta)$-system with $|I| \le \lambda$. It is a direct consequence of the definition of the $\tlt$ relation that we can find $\seq{I_i : i < \lambda}$ an increasing chain of $\mu$-directed subposets of $I$ such that $\bigcup_{i < \lambda} I_i = I$ and for all $i < \lambda$, $|I_i| <  \lambda_i$ for some regular $\lambda_i \in [\theta, \lambda]$. By Fact \ref{sys-basic-facts}(\ref{sys-basic-facts-1}), for each $i < \lambda$ we have that $A_i := \text{colim} (D \rest I_i)$ is $\lambda_i$-presentable, hence $(<\lambda^+)$-presentable, so the system consisting of $\seq{A_i : i < \lambda}$ is the desired chain. 
\end{proof}

We deduce the main theorem of this section. Note that the third part is an improvement on \cite[Lemma 1]{rosicky-sat-jsl}, which assumed in addition existence of directed colimits.

\begin{thm}\label{filtr-thm} Let $\mu$ be a regular cardinal, and let $\lambda$ be a cardinal such that $\mu \tlt \lambda$ and\footnote{If $\lambda$ is regular this follows, but not necessarily if $\lambda$ is singular (e.g.\ if $\lambda$ is strong limit).} $\mu \tlt \lambda^+$.
  
  \begin{enumerate}
  \item If $\ck$ is a category which is $(\mu, \theta)$-accessible for some regular $\theta \le \lambda$, then $\ck$ is almost $\lambda^+$-filtrable.
  \item Any $\mu$-accessible category is $\lambda^+$-filtrable.
  \item Any $\mu$-accessible category is $\mu^+$-filtrable.
  \end{enumerate}
\end{thm}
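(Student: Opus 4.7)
My plan is to prove the three parts in sequence, using Lemma \ref{filtr-lem} (which reduces filtrability of an object to presenting it as the colimit of a small $(\mu, \theta)$-system) together with Fact \ref{raising-acc} (which produces such systems by raising the index of accessibility).

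For (1), I would first apply Fact \ref{raising-acc} to the $(\mu, \theta)$-accessible category $\ck$ with target index $\lambda^+$ (permitted by $\mu \tlt \lambda^+$). This yields, for any object of $\ck$, a decomposition as a $\lambda^+$-directed colimit $A = \text{colim}_{j \in J} A_j$ in which each $A_j$ is the colimit of a $(\mu, \theta)$-system of cardinality strictly less than $\lambda^+$, i.e.\ at most $\lambda$. Taking $A$ of presentability rank $\lambda^+$, its $\lambda^+$-presentability makes $\id_A$ factor through some $A_{j_0}$, so $A$ is a retract of $A_{j_0}$. Fact \ref{sys-basic-facts}(\ref{sys-basic-facts-1}) gives that $A_{j_0}$ is $(\lambda + \theta^+) = \lambda^+$-presentable, and the retraction forces $r_\ck(A) \le r_\ck(A_{j_0})$, so $r_\ck(A_{j_0}) = \lambda^+$. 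Lemma \ref{filtr-lem} applied to $A_{j_0}$ then produces a filtration, exhibiting $A$ as a retract of a filtrable $\lambda^+$-presentable object, which is almost $\lambda^+$-filtrability.

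For (2), I would repeat the construction of (1) with $\theta = \mu$, obtaining a retract witness $A_{j_0}$ of rank $\lambda^+$ that is the colimit of a $(\mu, \mu)$-system $\langle N_k : k \in K\rangle$ of cardinality at most $\lambda$. To upgrade to genuine filtrability of $A$ itself, I pass to the slice $\ck_\mu/A$ of $\mu$-presentable objects equipped with a map into $A$, which is $\mu$-filtered with canonical colimit $A$. Composing the system $\langle N_k \rangle$ with the retraction $c \colon A_{j_0} \to A$ yields a family of objects in $\ck_\mu/A$ of cardinality at most $\lambda$; and given any other $h \colon M \to A$ with $M$ $\mu$-presentable, the composite $s \circ h$ (where $s \colon A \to A_{j_0}$ is the section) factors through some $N_{k_0}$ by $\mu$-presentability of $M$, which shows the family is cofinal in $\ck_\mu/A$. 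From this cofinal $\mu$-filtered family of cardinality at most $\lambda$, I extract a cofinal $\mu$-directed subposet of the same cardinality, using $\mu \tlt \lambda^+$ (i.e.\ $\cf{[\lambda]^{<\mu}} \le \lambda$) to perform the cardinal arithmetic. The resulting $(\mu, \mu)$-system has colimit $A$, and Lemma \ref{filtr-lem} now applies directly.

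For (3), the claim follows from (2) specialized to $\lambda = \mu$: both $\mu \tlt \mu$ and $\mu \tlt \mu^+$ are automatic for regular $\mu$ (for $\theta < \mu$, $[\theta]^{<\mu} = \mathcal{P}(\theta)$ has $\theta$ as a maximum, so its cofinality is $1$; and the initial segments of $\mu$ form a cofinal family of cardinality $\mu$ in $[\mu]^{<\mu}$). The main obstacle lies in (2): upgrading ``retract of filtrable'' (which (1) delivers for free) to actual filtrability of $A$. This relies essentially on the $\mu$-accessibility hypothesis (to give meaning to the slice $\ck_\mu/A$ with canonical colimit $A$) and on $\mu \tlt \lambda^+$ (to convert the cofinal $\mu$-filtered family to a cofinal $\mu$-directed family without blowing up cardinality).
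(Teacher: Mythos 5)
Your part (1) is essentially the paper's proof: raise the index of accessibility to $\lambda^+$ via Fact \ref{raising-acc}, realize $A$ as a retract of some $A_{j_0}$ that is a $\mu$-directed colimit of at most $\lambda$-many $\theta$-presentables, check $r_\ck(A_{j_0})=\lambda^+$, and apply Lemma \ref{filtr-lem}. (Minor slip: the bound from Fact \ref{sys-basic-facts} for a $(\mu,\theta)$-system with at most $\lambda$ objects is $(\lambda^++\theta)$-presentability, not $(\lambda+\theta^+)$; the conclusion $\lambda^+$-presentable is right either way.)

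For (2) and (3) you diverge from the paper. The paper disposes of the retract by citing \cite[2.3.11]{makkai-pare}: under $\mu\tlt\lambda^+$, a $\lambda^+$-presentable object of a $\mu$-accessible category is itself a $\mu$-directed colimit of at most $\lambda$-many $\mu$-presentables; and for (3) it gives a two-line direct construction (enumerate the $\mu$-directed index poset of size $\mu$ and extract a cofinal chain of length $\mu$), bypassing Lemma \ref{filtr-lem} entirely. You instead reprove the Makkai--Par\'e fact inline: split the retraction through the canonical diagram $\ck_\mu/A$, observe that the images of the $N_k$ form a family of at most $\lambda$ objects into which every object of $\ck_\mu/A$ maps, and then convert this cofinal $\mu$-filtered subcategory into a $\mu$-directed poset. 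The splitting argument is correct, but the last step is precisely the nontrivial content of the cited fact and you assert it rather than prove it: the standard conversion of a $\mu$-filtered category to a cofinal $\mu$-directed poset indexes the poset by $(<\mu)$-sized sets of \emph{morphisms}, and a full subcategory of $\ck_\mu/A$ on $\lambda$ objects need not have only $\lambda$ morphisms, so $\cf{[\lambda]^{<\mu}}\le\lambda$ alone does not deliver a poset of cardinality at most $\lambda$. This is a soft spot rather than a fatal error (it is exactly what \cite[2.3.11]{makkai-pare} supplies), but as written your (2) leans on the very statement it is meant to replace. Finally, your reduction of (3) to (2) with $\lambda=\mu$ is correct in substance --- the chain extraction degenerates to the paper's direct argument --- but note that it does not literally instantiate the stated hypotheses: the paper defines $\mu\tlt\lambda$ only for $\mu<\lambda$, and Lemma \ref{filtr-lem} assumes $\mu\tlt\lambda$, so $\lambda=\theta=\mu$ is formally outside its scope; this is presumably why the paper states and proves (3) separately.
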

\begin{proof} \
  \begin{enumerate}
  \item Fix $\theta \le \lambda$ regular such that $\ck$ is $(\mu, \theta)$-accessible. By definition of $\mu \tlt \lambda$, we can increase $\theta$ if necessary to assume without loss of generality that $\mu \tlt \theta$. Let $A$ be an object of presentability rank $\lambda^+$. By the moreover part of Fact \ref{raising-acc}, $A$ is a $\lambda^+$-directed colimit of a diagram whose objects are all $\mu$-directed colimits of at most $\lambda$-many $\theta$-presentables. Thus $A$ is a retract of such a colimit, and the result follows from Lemma \ref{filtr-lem}.
  \item As before, but this time, by \cite[2.3.11]{makkai-pare}, we actually have that $A$ is a $\mu$-directed colimits of at most $\lambda$-many $\mu$-presentable objects (not just a retract of such a colimit).
  \item Let $A$ be an object of presentability rank $\mu^+$. Since $\mu \tlt \mu^+$, by \cite[2.3.11]{makkai-pare}, $A$ is a the colimit of a $\mu$-directed diagram $D: I \to \ck$ containing at most $\mu$-many $\mu$-presentable objects. Enumerate $I$ as $\{i_\alpha : \alpha < \mu\}$. We build $\seq{j_\alpha : \alpha < \mu}$ an increasing sequence in $I$ such that $j_\alpha \ge i_\beta$ whenever $\beta < \alpha$. This is possible because $I$ is $\mu$-directed, and in the end we get that $\seq{D_{j_\alpha} : \alpha < \mu}$ is the desired chain.
  \end{enumerate}
\end{proof}

We deduce some results on accessible categories with directed colimits. Note in particular that we have recovered the well known fact that any AEC $\K$ is well $\LS (\K)^{++}$-filtrable.

\begin{cor}\label{filtr-cor} \
  \begin{enumerate}
  \item Any finitely accessible category is well $\aleph_1$-filtrable.
  \item Any $\lambda$-accessible category with directed colimits is almost well $\lambda^+$-filtrable.
  \item Any $\lambda$-accessible category with directed colimits and all morphisms monos is well $\lambda^+$-filtrable.
  \end{enumerate}
\end{cor}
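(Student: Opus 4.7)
The plan is to treat each part by splitting on whether the target presentability rank is a successor or weakly inaccessible: successor ranks will be handled by direct application of Theorem~\ref{filtr-thm}, while weakly inaccessible ranks will be ruled out via Corollary~\ref{well-acc-succ} after upgrading to well accessibility. A single observation makes all cardinal-arithmetic hypotheses trivial in what follows: $\aleph_0 \tlt \theta$ holds for every cardinal $\theta > \aleph_0$, because $\cf{[\alpha]^{<\aleph_0}} \le |\alpha| + \aleph_0$ for any cardinal $\alpha$. Consequently, the $\mu \tlt \lambda$ hypotheses of Theorem~\ref{filtr-thm} are automatic whenever $\mu = \aleph_0$ (for any $\lambda > \aleph_0$).

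For part (1), $\ck$ is $\aleph_0$-accessible, so Fact~\ref{raising-acc} (combined with the observation) yields that $\ck$ is $\theta$-accessible for every regular $\theta \ge \aleph_0$; in particular, $\ck$ is well $\aleph_0$-accessible. For a regular $\lambda \ge \aleph_1$, I distinguish three cases: $\lambda = \aleph_1$ is handled by Theorem~\ref{filtr-thm}(3) with $\mu = \aleph_0$; $\lambda = \kappa^+$ for $\kappa \ge \aleph_1$ is handled by Theorem~\ref{filtr-thm}(2) with $\mu = \aleph_0$ and $\lambda$ in the theorem being $\kappa$; and $\lambda$ weakly inaccessible is handled by Corollary~\ref{well-acc-succ}, which forbids objects of such rank, making $\lambda$-filtrability vacuous. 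Part (2) is analogous. Having directed colimits, $\ck$ is $(\aleph_0, \lambda)$-accessible (since the $\lambda$-directed witnesses to $\lambda$-accessibility are in particular $\aleph_0$-directed). Fact~\ref{raising-acc} then upgrades this to $\theta$-accessibility for every regular $\theta \ge \lambda$, so $\ck$ is well $\lambda$-accessible. For a regular $\theta \ge \lambda^+$: if $\theta = \kappa^+$ with $\kappa \ge \lambda$, then Theorem~\ref{filtr-thm}(1) applies with $\mu = \aleph_0$, $\lambda$ in the theorem being $\kappa$, and the hypothesis ``$(\aleph_0, \theta')$-accessible for some regular $\theta' \le \kappa$'' met by $\theta' = \lambda$; weakly inaccessible $\theta$ are eliminated by Corollary~\ref{well-acc-succ}. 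Finally, part (3) follows from part (2) together with the remark after Definition~\ref{defresolving}: when all morphisms are monos, retractions are isomorphisms, and so ``almost filtrable'' collapses to ``filtrable''.

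No serious obstacle is expected; the structural work is already carried by Theorem~\ref{filtr-thm} and Corollary~\ref{well-acc-succ}. The one mild point requiring attention is in part (2), where one must observe that $\lambda$-accessibility plus directed colimits really does yield $(\aleph_0, \lambda)$-accessibility (rather than only $(\aleph_0, \aleph_0)$-accessibility, which would be too weak), so that Theorem~\ref{filtr-thm}(1) can be invoked uniformly for every target rank $\kappa^+ \ge \lambda^+$ with $\theta' = \lambda$.
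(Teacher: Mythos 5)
Your proof is correct and follows essentially the same route as the paper: parts (1) and (2) come from specializing Theorem~\ref{filtr-thm} to $\mu = \aleph_0$ (using that $\aleph_0 \tlt \theta$ is automatic for every $\theta > \aleph_0$), and part (3) follows because a monic retraction is an isomorphism. You are merely more explicit than the paper's one-line proof about the weakly inaccessible ranks, which are indeed vacuous by Corollary~\ref{well-acc-succ} since the categories in question are well accessible, and about the need for $(\aleph_0,\lambda)$-accessibility in part (2); both points are correctly handled.
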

\begin{proof}
  The first two results are immediate by setting $\mu := \aleph_0$ in Theorem \ref{filtr-thm}. For the third, simply observe that a retraction which is a monomorphism is an isomorphism.
\end{proof}

\begin{remark}
  Using a fat small object argument to eliminate retracts (see \cite{fat-small-obj}), we can also show that any locally $\lambda$-presentable category is well $\lambda^+$-filtrable.
\end{remark}

\bibliographystyle{amsalpha}
\bibliography{internal-size-improved}

\end{document}